\newtheorem{lemma}{Lemma}[section]
\newtheorem{thm}[lemma]{Theorem}
\newtheorem{proposition}[lemma]{Proposition}
\newtheorem{corollary}[lemma]{Corollary}
\theoremstyle{definition}
\newtheorem{definition}[lemma]{Definition}
\newtheorem{example}[lemma]{Example}
\theoremstyle{remark}
\numberwithin{equation}{section}
\newcommand{\comment}[1]{}
\newcommand{\R}{{\mathbb R}}
\newcommand{\N}{{\mathbb N}}
\newcommand{\EE}{{\mathbb E}}
\newcommand{\PP}{{\mathbb P}}
\newcommand{\A}{{\mathcal{A}}}
\newcommand{\LL}{{\mathcal{L}}}
\newcommand{\QQ} {{\mathcal{Q}}}
\newcommand{\DD}{{\mathcal{D}}}
\newcommand{\FF}{{\mathcal{F}}}
\newcommand{\supp}{{\mathrm {supp}\,}}
\newcommand{\Deg}{{\mathrm {Deg}}}
\newcommand{\al}{{\alpha}}
\newcommand{\de}{{\delta}}
\newcommand{\eps}{{\varepsilon}}
\newcommand{\ph}{{\varphi}}
\newcommand{\lm}{{\lambda}}
\newcommand{\si}{{\sigma}}
\newcommand{\Om}{{\Omega}}
\newcommand{\ov}[1]{\overline{ #1}}
\newcommand{\Hm}[1]{\leavevmode{\marginpar{\tiny%
$\hbox to 0mm{\hspace*{-0.5mm}$\leftarrow$\hss}%
\vcenter{\vrule depth 0.1mm height 0.1mm width \the\marginparwidth}%
\hbox to 0mm{\hss$\rightarrow$\hspace*{-0.5mm}}$\\\relax\raggedright
#1}}}
\def\Deg{\mathrm{Deg}}
\renewcommand{\epsilon}{\varepsilon}
\renewcommand{\phi}{\varphi}
\begin{document}

\title[Intrinsic metrics on  graphs]{Intrinsic metrics on graphs: A survey}
%\title{Intrinsic metrics on graphs}
\author[M. Keller]{Matthias Keller}
%\author{Matthias Keller}
\address{Mathematisches Institut \\Friedrich Schiller Universit{\"a}t Jena \\07743 Jena, Germany } \email{m.keller@uni-jena.de}

\maketitle

\begin{abstract}
A few years ago various disparities for Laplacians on graphs and
manifolds were discovered. The corresponding results are mostly
related to volume growth in the context of unbounded geometry.
Indeed, these disparities can now be resolved by using so called
intrinsic metrics instead of the combinatorial graph distance. In
this article we give an introduction to this topic and survey recent
results in this direction. Specifically, we cover topics such as
Liouville type theorems for harmonic functions, essential
selfadjointness, stochastic completeness and upper escape rates.
Furthermore, we determine the spectrum as a set via solutions,
discuss upper and lower spectral bounds by isoperimetric constants
and volume growth and study $p$-independence of spectra under a
volume growth assumption.
\end{abstract}
\tableofcontents
\section{Introduction}
There are many parallels in the  analysis of Laplacians on graphs and manifolds. However, starting with examples discovered by Wojciechowski \cite{Woj1, Woj2,Woj3} various disparities surfaced that show very different behavior for certain phenomena on manifolds and graphs. These phenomena appeared in the context of unbounded geometry, i.e., when the corresponding graph Laplacian is an unbounded operator. In particular, these phenomena do not show for the normalized Laplacian. Furthermore, a common feature these phenomena share are that they deal with questions of volume growth and more specifically with distances.

Independently, Frank/Lenz/Wingert \cite{FLW} developed a framework to study so called intrinsic metrics for general, in particular non-local, regular Dirichlet forms. Such metrics had already proven to be very effective in the context of  strongly local regular Dirichlet forms, see e.g. the work of Sturm \cite{Stu}. In \cite{FLW} already various applications were given  for general Dirichlet forms.

Indeed, the Laplace-Beltrami operator on a Riemannian manifold arises from a strongly local Dirichlet form and the corresponding intrinsic metric turns out to be the Riemannian metric. On the other hand, Laplacians on graphs arise from Dirichlet forms that are non-local and, thus, their forms do not fit in the framework of \cite{Stu}. So, the question arises what is a suitable metric for a given graph. The most immediate choice is the combinatorial graph distance which is  given by the minimal number of edges needed to connect two vertices by a path.
It turns out that the combinatorial graph distance is equivalent to an intrinsic metric in the sense of  \cite{FLW} if and only if the geometry is bounded. As a consequence many results known for Riemannian manifolds can be proven for graphs in the case of bounded geometry, but they fail for graphs with unbounded geometry if one considers the combinatorial graph distance.

Now, \cite{FLW} provides a very general concept of intrinsic metrics which can be applied also to  graph Laplacians. And, indeed, these metrics work as a remedy to recover the results of Riemannian manifolds for graphs (even with unbounded geometry) in very great generality. From this perspective, the case of bounded geometry and, in particular, the normalized Laplacian appears as a special case.

In this survey we review the concept of intrinsic metrics and present recent results in the context of weighted graphs. In the next two sections we introduce the basic concepts of weighted graphs, Laplacians  and intrinsic metrics.

The sections after these introductional parts  are structured in the following way: We first introduce the question and discuss the known results in the manifold setting. Then we examine the situation for graphs with bounded geometry and illustrate the problems that arise for  unbounded geometry. Finally, we present results involving intrinsic metrics to recover the results for manifolds for general weighted graphs.

In Section~\ref{s:Liouville} we study harmonic functions and discuss Liouville type theorems involving $\ell^{p}$ growth bounds, $p\in(1,\infty)$, which are classical results of Yau \cite{Yau} and Karp \cite{Ka} for manifolds. As a corollary, we obtain a criterion for recurrence. The recent results for graphs in this section are mainly based on results found in \cite{HK}.

We use these results in Section~\ref{s:ESA} to draw consequences about the domain of the generators and, in particular, about essential selfadjointness for the case $p=2$ which is a result of Gaffney \cite{Ga} and Roelcke \cite{R} for complete manifolds. For graphs, these are results based on \cite{HKMW,HK}.

In Section~\ref{s:SC} we consider stochastic completeness, a property which is equivalent to uniqueness of solutions of the heat equation and the Poisson equation in $\ell^{\infty}$. For this phenomena Wojciechowski \cite{Woj3} discovered examples of  only little more than cubic volume growth with respect to the combinatorial graph distance that are not stochastically complete while the volume growth bound for manifolds is superexponential as proven by Grigor'yan, \cite{Gri86}. With the help of intrinsic metrics the  result from the manifold setting can be recovered for locally finite graphs, see \cite{Fol,Hu1}.
After stochastic completeness, we take a look at upper escape rates of the related Markov process in Section~\ref{s:UER}. This is  based on results of \cite{HS}.

The $\ell^{2}$-spectrum as a set can be determined by existence of subexponentially growing solutions in $\ell^{2}$ which is a classical result by Shnol' \cite{Sh} in $\R^{n}$ and was generalized by Boutet de Monvel/Lenz/Stollmann to strongly local Dirichlet forms, \cite{BLS}. This is discussed in Section~\ref{s:Shnol} and the references for graphs are found in \cite{FLW,K3}.

After this result on the spectrum as a set, we focus on the lower edge and put the focus on lower and upper spectral bounds via geometric invariants.

First, we consider lower bounds   via isoperimetric constants in Section~\ref{s:Cheeger}. The use of such a constant  to estimate the bottom of the spectrum goes back to a classical theorem of Cheeger, \cite{Ch}. For graphs a first result in this direction is due to Dodziuk \cite{Do} which is however only applicable for graphs with bounded vertex degree. This problem was pointed out in 1986 Dodziuk/Kendall, \cite{DK},  and they gave a proof for the normalized Laplacian recovering the original result of Cheeger in this particular situation.  Since then the general case remained open and a priori it is  not even clear  what role do metrics play in this problem at all. Recently, the problem was solved in \cite{BKW} by the use of intrinsic metrics that now enter the definition of the boundary measure in the isoperimetric constant.

Upper bounds on the bottom of the spectrum can be given in terms of exponential volume growth of distance balls in Section~\ref{s:Brooks} which is a result of Brooks \cite{Br} which is paralleled  by result of Sturm \cite{Stu}. Such a result  fails for unbounded graph Laplacians and the combinatorial graph distance, since there are graphs with only little more than cubic volume growth that admit a spectral gap, \cite{KLW}. Again, using intrinsic metrics the result that holds for Riemannian manifolds can be recovered for graphs and even for general Dirichlet forms, \cite{HKW}.

Finally, in Section~\ref{s:p_ind} we study the spectrum of the Laplacian in dependence of the  underlying $\ell^{p}$ space, $p\in[1,\infty]$. This question was answered for Schr\"odinger operators in $\R^{n}$ by Hempel/Voigt \cite{HV} in 1986. This question was later addressed by Sturm for manifolds \cite{Stu2} in terms of volume growth. It turns out that in the case of uniform subexponential volume growth the spectrum can be shown to be $p$-independent. For results on graphs the results are found in \cite{BHK}.

\section{Graphs and Laplacians}
In this section we introduce the basic notions for weighted
symmetric graphs. We mainly follow the framework developed in
\cite{KL1}.

%%% SUBSECTION %%%%%%%%%%%%%%%%%%%%%%%%%%%%%%%%%%%%%%%%%%%

\subsection{Graphs}
Let $X$ be a discrete and countably infinite space and  $m$ a measure of full support on $X$, that is a function $m:X\to(0,\infty)$ which is extended additively to sets via $m(A)=\sum_{x\in A}m(x)$, $A\subseteq X$.

A graph $(b,c)$ over $X$  is a symmetric function $b:X\times X\to[0,\infty)$ with zero diagonal and
\begin{align*}
    \sum_{y\in X}b(x,y)<\infty,\quad x\in X,
\end{align*}
and $c:X\to[0,\infty)$. We say two vertices $x,y\in X$ are \emph{neighbors}  if $b(x,y)>0$. In this case we write $x\sim y$.  The function $c$ can be thought to describe one-way-edges to a virtual point at infinity or as a potential or as a killing term.
If $c\equiv 0$, then we speak of $b$ as a graph over $X$.
 If we already fixed a measure, then we speak of graphs over $(X,m)$.

We say a graph is \emph{connected} if for all $x,y\in X$ there are $x=x_{0}\sim\ldots\sim x_{n}=y$. If a graph is not connected one can restrict the attention to the connected components. Therefore, we assume that the graphs under consideration are connected.

Given a pair $(b,c)$ an important special case of a measure $m$ is the \emph{normalizing measure}
\begin{align*}
    n(x)=\sum_{y\in X}b(x,y)+c(x),\quad x\in X.
\end{align*}
Another important special case is the \emph{counting measure} $m\equiv 1$.

We say a graph is \emph{locally finite} if every vertex has only finitely many neighbors, that is if the \emph{combinatorial vertex degree} $\deg$ is finite at every vertex
\begin{align*}
   \deg(x)= \#\{y\in X\mid x\sim y\} <\infty,\quad\mbox{ for all } x\in X.
\end{align*}

We speak of a graph with \emph{standard weights} if $b:X\times X\to\{0,1\}$ and $c\equiv0$. In this case the normalizing measure $n$ equals the {combinatorial vertex degree} $\deg$. Obviously, by summability of $b$ about vertices, graphs with standard weights are locally finite.

\subsection{Generalized forms and formal Laplacians}
We let $C(X)$ be the set of real valued functions on $X$ and $C_{c}(X)$ be the subspace of functions in $C(X)$ of finite support. For the corresponding theory for complex valued function the theory can be transferred by the virtue of standard arguments presented in \cite[Appendix B]{HKLW}.

For a graph $(b,c)$ over $X$, the generalized quadratic form $\QQ:C(X)\to[0,\infty]$ is given by
\begin{align*}
\QQ(f)=\frac{1}{2}\sum_{x,y\in X} b(x,y)|f(x)-f(y)|^{2}+\sum_{x\in X} c(x)|f(x)|^{2}
\end{align*}
with generalized domain
\begin{align*}
    \DD=\{f\in C(X)\mid \QQ(f)<\infty\}.
\end{align*}
Since $\QQ^{\frac{1}{2}}$ is a semi norm and satisfies the parallelogram identity, $\QQ$ yields a semi scalar product on $\DD$  by polarization via
\begin{align*}
\QQ(f,g)=\frac{1}{2}\sum_{x,y\in X} b(x,y)(f(x)-f(y)){(g(x)-g(y))}+\sum_{x\in X} c(x)f(x){g(x)}.
\end{align*}
Moreover, for functions in
\begin{align*}
    \FF=\{f\in C(X)\mid \sum_{y\in X}b(x,y)|f(y)|^{2}<\infty \mbox{ for all }x\in X\},
\end{align*}
we define the generalized Laplacian $\LL:\FF\to C(X)$ by
\begin{align*}
    \LL f(x)=\frac{1}{m(x)}\sum_{y\in X}b(x,y)(f(x)-f(y)) + \frac{c(x)}{m(x)}f(x).
\end{align*}
In \cite[Lemma 4.7]{HK}, a \emph{Green's formula} is shown for functions $f\in \FF$ and $\ph\in C_c(X)$
\begin{align*}
    \frac{1}{2}\sum_{x,y\in X} b(x,y)(f(x)-f(y)){(\ph(x)-\ph(y))}+\sum_{x\in X} c(x)f(x){\ph(x)}
\end{align*}
\begin{align*}
    =\sum_{x\in X}\LL f(x){\ph(x)}m(x)=\sum_{x\in X} f(x){\LL\ph(x)}m(x).
\end{align*}
In the special case when the functions under consideration are in a certain Hilbert space the formula above might be expressed by scalar products, see Section~\ref{s:MarkovianSG}.

%Define
%\begin{align*}     d:C(X)\to C(E), \quad df(x,y)=f(x)-f(y). \end{align*}
%For various questions in operator theory, probability and spectral theory it is important to study solutions $f\in \FF$
%\begin{align*}     (\LL-\lm)f=0 \end{align*}
%for certain $\lm\in\R$, where $f$ is assumed to have certain additional properties. For example to study essential selfadjointness of the restriction of $\LL$ to $C_{c}(X)$, one considers $\lm<0$ and aims to exclude solutions in $\ell^{2}(X,m)$, see Section~\ref{s:ESA}. Moreover, to study stochastic completeness one also considers $\lm<0$ but now one aims to exclude trivial solutions in $\ell^{\infty}(X)$, see Section~\ref{s:SC}. A further example is in order to determine the spectrum as a set one aims to find solutions that are subexponentially bounded in $\ell^{2}(X,m)$.

We call $f\in\FF$ a \emph{solution} (respectively \emph{subsolution} or \emph{supersolution}) for $\lm\in\R$ if $(\LL-\lm)f=0$ (respectively  $(\LL-\lm)f\le0$ or  $(\LL-\lm)f\ge0$). A solution (respectively subsolution or supersolution) for $\lm=0$ is called a \emph{harmonic} (respectively \emph{subharmonic} or \emph{superharmonic}).

We say a function $f\in C(X)$ is \emph{positive} if $f$ is non-trivial and $f\ge0$  and \emph{strictly positive} if $f>0$.

For two functions $f,g\in C(X)$ we denote the minimum of $f$ and $g$ by $f\wedge g$ and the maximum of $f$ and $g$ by $f\vee g$.

A \emph{Riesz space} is a linear space equipped with a partial
ordering that is consistent with addition, scalar multiplication and
where the maximum and the minimum of two functions exist.

An important well known fact which is needed in the subsequent is that in order to study existence of non-constant (respectively non-zero) solutions for $\lm\leq 0$ in a Riesz space, it suffices to study positive subharmonic functions.

\begin{lemma}\label{l:SolutionsVsSubsolutions} Let $(b,c)$ be a connected graph over $(X,m)$ and $\FF_{0}\subseteq \FF$ be a Riesz space. If there are no non-constant  positive subharmonic functions in $\FF_{0}$, then there are no non-constant solutions for $\lm\leq 0$ in $\FF_{0}$. In particular, any solution for $\lm<0$ is zero.
\end{lemma}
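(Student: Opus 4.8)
The plan is to reduce an arbitrary solution to its positive part and then invoke the hypothesis. Let $f\in\FF_{0}$ be a solution for some $\lm\le 0$, so $\LL f=\lm f$. Because $\FF_{0}$ is a Riesz space, the positive part $f_{+}:=f\vee 0$ again belongs to $\FF_{0}\subseteq\FF$, and likewise $f_{-}:=(-f)\vee 0$. The central claim is a discrete Kato-type inequality: $f_{+}$ is subharmonic, i.e. $\LL f_{+}\le 0$ pointwise. Granting this, $f_{+}$ is a nonnegative subharmonic function, so it is either identically zero or positive subharmonic and hence, by hypothesis, constant; in either case $f_{+}$ is a constant function. Since $-f$ is again a solution for $\lm$ by linearity of $\LL$, the same argument applied to $-f$ shows that $f_{-}$ is constant. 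As $f=f_{+}-f_{-}$, the solution $f$ is then constant, which proves the first assertion.

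To establish the inequality I would argue pointwise in two cases. At a vertex $x$ with $f(x)>0$ we have $f_{+}(x)=f(x)$ and $f_{+}(y)\ge f(y)$ for all $y$, whence $\LL f_{+}(x)-\LL f(x)=\frac{1}{m(x)}\sum_{y}b(x,y)\big(f(y)-f_{+}(y)\big)\le 0$ using $b\ge 0$; therefore $\LL f_{+}(x)\le \LL f(x)=\lm f(x)\le 0$, where the last step uses $\lm\le 0$ and $f(x)>0$. At a vertex $x$ with $f(x)\le 0$ we have $f_{+}(x)=0$, so $\LL f_{+}(x)=-\frac{1}{m(x)}\sum_{y}b(x,y)f_{+}(y)\le 0$ since $f_{+}\ge 0$. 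Thus $\LL f_{+}\le 0$ everywhere. It is worth noting that the killing term $c$ plays no role here: in the first case its contribution cancels in the difference $\LL f_{+}-\LL f$, and in the second case it is annihilated by $f_{+}(x)=0$; only $b\ge 0$ and $\lm\le 0$ are needed for subharmonicity.

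For the refinement to $\lm<0$, I would substitute the now-established fact that $f$ is a constant $a$ back into the equation. For a constant function one computes $\LL a(x)=a\,\tfrac{c(x)}{m(x)}$, so $\LL f=\lm f$ forces $a\,\tfrac{c(x)}{m(x)}=\lm a$ for every $x$; if $a\neq 0$ this yields $\tfrac{c(x)}{m(x)}=\lm<0$, contradicting $c\ge 0$ and $m>0$, so $a=0$ and $f\equiv 0$. The only genuinely delicate point in the whole argument is the Kato-type inequality, since the nonlinear truncation $f\mapsto f_{+}$ must be controlled against the nonlocal off-diagonal part of $\LL$; this is exactly where the sign conditions $b\ge 0$ and $\lm\le 0$ are indispensable, while the Riesz-space assumption is precisely what keeps $f_{+}$ (and $f_{-}$) inside the class $\FF_{0}$ so that the hypothesis on positive subharmonic functions may be applied.
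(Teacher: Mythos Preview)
Your proof is correct and follows essentially the same route as the paper: split a solution into $f_{+}$ and $f_{-}$, observe that each is a nonnegative subharmonic function in $\FF_{0}$, apply the hypothesis to conclude each is constant, and then deduce that $f$ itself is constant (with the $\lm<0$ case forced to zero). The paper states the subharmonicity of $f_{+}$, $f_{-}$, $|f|$ without computation and calls the ``in particular'' obvious; your pointwise Kato-type case analysis and the substitution argument for $\lm<0$ simply fill in those details, and they are done correctly.
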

\begin{proof}For a  solution $f$ for $\lm\leq0$ the positive part $f_{+}=f\vee 0$, the negative part $f_{-}=-f\vee 0$ and the modulus $|f|=f_{+}+f_{-}$ can directly be seen to be non-negative subharmonic functions. Thus, the statement follows from connectivity. The 'in particular' is obvious.
\end{proof}

%% SUBSECTION %%%%%%%%%%%%%%%%%%%%%%%%%%%%%%%%%%%%%%%%%%%%

\subsection{Dirichlet forms and their generators}
The form and Laplacian introduced above are defined on spaces with
rather few structure. Next, we will consider these objects
restricted to suitable Hilbert and Banach spaces.

Let $\ell^{p}(X,m)$ be the canonical real valued $\ell^{p}$-spaces, $p\in[1,\infty]$, with norms
\begin{align*}
    {\|f\|}_{p}=\Big(\sum_{x\in X}|f(x)|^{p}m(x)\Big)^{\frac{1}{p}}, \; p\in[1,\infty),
\end{align*}
\begin{align*}
    {\|f\|}_{\infty}=\sup_{x\in X}|f(x)|.
\end{align*}
As $\ell^{\infty}(X,m)$ does not depend on $m$ we also write $\ell^{\infty}(X)$.
For $p=2$, we have a Hilbert space $\ell^{2}(X,m)$ with scalar product
\begin{align*}
    \langle{f,g}\rangle= \sum_{x\in X}f(x){g(x)}m(x),\qquad f,g\in \ell^{2}(X,m),
\end{align*}
and we denote the norm  $\|\cdot\|={\|\cdot\|}_{2}$.

%% SUBSUBSECTION %%%%%%%%%%%%%%%%%%%%%%%%%%%%%%%%%%%%%%%%%

\subsubsection{Dirichlet forms}
Restricting the form $\QQ$ to $\DD\cap\ell^{2}(X,m)$, we see by Fatou's lemma that this restriction is lower semi-continuous and, thus, closed. Hence, the restriction of $\QQ$ to $C_{c}(X)$ is closable.

Let $Q=Q_{b,c}$ be the quadratic form given by
\begin{align*}
    D(Q)&=\overline{C_{c}(X)}^{\|\cdot\|_{\QQ}},\; \mbox{ where } \|\cdot\|_{\QQ} =\big(\QQ(\cdot)+\|\cdot\|^{2}\big)^{\frac{1}{2}}\\    Q(f)&=\frac{1}{2}\sum_{x,y\in X}b(x,y)|f(x)-f(y)|^{2}+\sum_{x\in X}c(x)|f(x)|^{2},\qquad f\in D(Q).
\end{align*}
It can be seen that $Q$ is a  \emph{Dirichlet form}  that is for any $f\in D(Q)$ we have $0\vee f\wedge 1\in D(Q)$ and
\begin{align*}
    Q(0\vee f\wedge 1)\leq Q(f).
\end{align*}
(see \cite[Theorem~3.1.1]{Fuk} for general theory and for a proof in the graph setting see \cite[Proposition 2.10]{Schm}).
Obviously, $Q$ is \emph{regular}, that is  $C_{c}(X)\cap D(Q)$ is dense in $D(Q)$ with respect to ${\|\cdot\|}_{\QQ}$ and dense in $C_{c}(X)$ with respect to ${\|\cdot\|}_{\infty}$.

As it turns out, by \cite[Theorem~7]{KL1},  all regular Dirichlet
forms on discrete spaces are given in this way. Indeed, this can be
also derived directly from the Beurling-Deny representation formula
\cite[Theorem~3.2.1 and Theorem~5.2.1]{Fuk}.

\begin{thm}[Theorem~7 in \cite{KL1}] If $q$ is a regular Dirichlet on $\ell^{2}(X,m)$, then there is a graph $(b,c)$ such that $q=Q_{b,c}$.
\end{thm}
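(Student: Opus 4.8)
The plan is to reconstruct the graph directly from the matrix elements of $q$ in the point indicators $\mathbf{1}_{x}\in C_{c}(X)$, exploiting that on a discrete space $X$ the form is completely determined, by bilinearity, by the numbers $q(\mathbf{1}_{x},\mathbf{1}_{y})$. The first and most delicate step is to show that $\mathbf{1}_{x}\in D(q)$ for every $x$. I would obtain this from regularity together with the stability of Dirichlet forms under normal contractions (the standard strengthening of the stated unit-contraction property, see \cite[Theorem~1.4.1]{Fuk}): since $C_{c}(X)\cap D(q)$ is dense in $C_{0}(X)$ for $\|\cdot\|_{\infty}$, for any $\eps\in(0,\tfrac12)$ there is $v\in C_{c}(X)\cap D(q)$ with $\|v-\mathbf{1}_{x}\|_{\infty}<\eps$; applying the normal contraction $\phi(s)=0\vee(s-\eps)\wedge(1-2\eps)$ yields $\phi\circ v=(1-2\eps)\mathbf{1}_{x}\in D(q)$, whence $\mathbf{1}_{x}\in D(q)$.

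Once point indicators lie in the domain, I would set
\begin{align*}
b(x,y)=-q(\mathbf{1}_{x},\mathbf{1}_{y})\ \text{ for }x\neq y,\qquad b(x,x)=0,\qquad c(x)=q(\mathbf{1}_{x})-\sum_{y}b(x,y),
\end{align*}
and verify that $(b,c)$ is a graph. Symmetry and the vanishing diagonal of $b$ are immediate from the symmetry of $q$. The nonnegativity $b(x,y)\geq0$ follows from the Markov property applied to $f=\mathbf{1}_{x}-t\mathbf{1}_{y}$ with $t>0$: the unit contraction $0\vee f\wedge1$ equals $\mathbf{1}_{x}$, so $q(\mathbf{1}_{x})\leq q(\mathbf{1}_{x}-t\mathbf{1}_{y})=q(\mathbf{1}_{x})-2t\,q(\mathbf{1}_{x},\mathbf{1}_{y})+t^{2}q(\mathbf{1}_{y})$, and dividing by $t$ and letting $t\to0^{+}$ gives $q(\mathbf{1}_{x},\mathbf{1}_{y})\leq0$. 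Analogously, testing the contraction against $\mathbf{1}_{F}+t\mathbf{1}_{x}$ for finite $F\ni x$ (whose unit contraction is $\mathbf{1}_{F}$) gives $q(\mathbf{1}_{x},\mathbf{1}_{F})\geq0$; since $q(\mathbf{1}_{x},\mathbf{1}_{F})=q(\mathbf{1}_{x})-\sum_{y\in F}b(x,y)$, letting $F\uparrow X$ yields simultaneously the summability $\sum_{y}b(x,y)\leq q(\mathbf{1}_{x})<\infty$ and the nonnegativity $c(x)\geq0$.

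It remains to identify $q$ with $Q_{b,c}$. A direct computation of the matrix elements of $Q_{b,c}$ gives $Q_{b,c}(\mathbf{1}_{x},\mathbf{1}_{y})=-b(x,y)=q(\mathbf{1}_{x},\mathbf{1}_{y})$ for $x\neq y$ and $Q_{b,c}(\mathbf{1}_{x})=\sum_{y}b(x,y)+c(x)=q(\mathbf{1}_{x})$, so the two forms agree on all of $C_{c}(X)$ by bilinearity (finite sums, no convergence issue). Finally, by the construction in the excerpt $Q_{b,c}$ is the closure of its restriction to $C_{c}(X)$, and by regularity---now that $C_{c}(X)\subseteq D(q)$---the space $C_{c}(X)$ is a form core for $q$; since the two form norms coincide on $C_{c}(X)$, their completions coincide, giving $D(q)=D(Q_{b,c})$ and $q=Q_{b,c}$.

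The main obstacle is the first step, showing that point indicators belong to $D(q)$, since it is the only place where one must genuinely combine regularity with the contraction properties of the Dirichlet form; the sign and summability conditions are variations on the same contraction trick, and the concluding core/closure argument is routine. I note that this indicator-function approach sidesteps the Beurling--Deny formula entirely: the strongly local part is never excluded by hand but disappears automatically, because on a discrete space a regular form is recovered from its finite matrix elements alone.
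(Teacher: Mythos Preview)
Your argument is correct. The paper itself does not supply a proof of this statement---it simply cites Theorem~7 of \cite{KL1} and remarks that the result can alternatively be read off from the Beurling--Deny representation formula \cite[Theorems~3.2.1 and~5.2.1]{Fuk}. Your direct indicator-function construction (show $\mathbf{1}_{x}\in D(q)$ via regularity plus a normal contraction, read off $b$ and $c$ from the matrix elements, verify the sign and summability constraints by unit contractions, then close up from the common core $C_{c}(X)$) is in fact the approach of the cited reference \cite{KL1}, so you have essentially reproduced the intended proof. Your closing observation is apt: on a discrete space the Beurling--Deny machinery is unnecessary, and the absence of a strongly-local part falls out automatically because the form is already determined by its values on finitely supported functions.
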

%% SUBSUBSECTION %%%%%%%%%%%%%%%%%%%%%%%%%%%%%%%%%%%%%%%%%

\subsubsection{Markovian semigroups and their generators}\label{s:MarkovianSG}
By general theory (see e.g. \cite[Satz 4.14]{Weidmann}), $Q$
yields a positive selfadjoint operator $L$ with domain $D(L)$ viz
\begin{align*}
    Q(f,g)=\langle L^{\frac{1}{2}}f,L^{\frac{1}{2}}g\rangle, \qquad f,g\in D(Q).
\end{align*}
By Green's formula it can be seen directly that $L$ is a restriction of $\LL$ which reads in the case when $f\in D(Q)\cap \mathcal{F}$ and $\ph\in C_{c}(X)$ as
\begin{align*}
    Q(f,\ph)=\langle \mathcal{L}f,\ph\rangle,
\end{align*}
where the right hand side also equals $\langle f,\mathcal{L}\ph\rangle$ whenever $\mathcal{L}\ph\in\ell^{2}(X,m)$.

By the second Beurling-Deny criterion $L$ gives rise to
a Markovian semigroup $e^{-tL}$, $t>0$, which extends consistently to all $\ell^{p}(X,m)$, $p\in[1,\infty]$, and is strongly continuous for $p\in[1,\infty)$. \emph{Markovian} means that for functions $0\leq f\leq 1$, one has $0\leq e^{-tL}f\leq 1$.\\

We denote the generators of $e^{-tL}$ on $\ell^{p}(X,m)$, $p\in[1,\infty)$, by $L_{p}$, that is
\begin{align*}
    D(L_{p})=\big\{f\in \ell^{p}(X,m)\mid g=\lim_{t\to0}\frac{1}{t}(I-e^{-tL})f\mbox{ exists in } \ell^{p}(X,m)\big\}
\end{align*}
\begin{align*}    L_{p}f=g
\end{align*}
and $L_{\infty}$ is defined as the adjoint of $L_{1}$.

It can be shown that  $L_{p}$ is a restriction of $\LL$.

\begin{thm}[Theorem~5 in \cite{KL1}]
Let $(b,c)$ be a  graph over $(X,m)$ and $p\in[1,\infty]$. Then,
\begin{align*}
    L_{p}f=\LL f,\qquad f\in D(L_{p}).
\end{align*}
\end{thm}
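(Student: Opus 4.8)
The plan is to show that the generator $L_p$ of the Markovian semigroup agrees with the formal Laplacian $\LL$ on its domain, for every $p\in[1,\infty]$. The cleanest route is to first establish the statement for $p=2$, where the Hilbert space structure and the spectral calculus are available, and then bootstrap to the other values of $p$ using the consistency of the semigroups $e^{-tL}$ across the scale of $\ell^p$-spaces.

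First I would treat the case $p=2$. The operator $L=L_2$ is the positive selfadjoint operator associated with the Dirichlet form $Q$ via $Q(f,g)=\langle L^{\frac12}f,L^{\frac12}g\rangle$. For $f\in D(L)\subseteq D(Q)$ and any $\ph\in C_c(X)\subseteq D(Q)$ one has $\langle Lf,\ph\rangle=Q(f,\ph)$. Now $C_c(X)\subseteq\FF$, so Green's formula applies and gives $Q(f,\ph)=\langle f,\LL\ph\rangle$; on the other hand, if one can justify that $f\in\FF$ (so that $\LL f$ is defined pointwise), Green's formula also yields $Q(f,\ph)=\langle\LL f,\ph\rangle$. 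Since $\ph\in C_c(X)$ is arbitrary and $C_c(X)$ is total, testing against $\ph=\delta_x$ forces $Lf(x)=\LL f(x)$ for every $x$. The delicate point here is precisely showing that $D(L)\subseteq\FF$, i.e. that an $\ell^2$-function in the domain of the form generator automatically satisfies the summability condition $\sum_y b(x,y)|f(y)|^2<\infty$ defining $\FF$; this is where the local summability of $b$ about each vertex and an approximation argument with cutoff functions enter.

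Next I would pass to general $p\in[1,\infty)$. The key tool is that the semigroups are consistent: $e^{-tL}$ on $\ell^2$ and $\ell^p$ coincide on the overlap $\ell^2\cap\ell^p$, which is dense in $\ell^p$ for $p<\infty$. Given $f\in D(L_p)$, one sets $g=L_pf=\lim_{t\to 0}\tfrac1t(I-e^{-tL})f$ in $\ell^p$. Testing against $\ph\in C_c(X)$, one computes $\langle \tfrac1t(I-e^{-tL})f,\ph\rangle$ and wants to pass to the limit to obtain $\langle g,\ph\rangle=\langle f,\LL\ph\rangle$, using the already established identity $\tfrac1t(I-e^{-tL})^*\ph\to \LL\ph$ that follows from the $p=2$ (or $p'$) case applied to the compactly supported $\ph$, for which $\ph\in D(L_{p'})$ and $L_{p'}\ph=\LL\ph$. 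This duality pairing again identifies $g=L_pf$ with $\LL f$ pointwise, once one knows $f\in\FF$. The case $p=\infty$ then follows since $L_\infty$ is defined as the adjoint of $L_1$, so the identity $L_1\ph=\LL\ph$ on $C_c(X)$ transfers by the same testing argument.

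The main obstacle I expect is the verification that $D(L_p)\subseteq\FF$, i.e. that domain elements are regular enough for the pointwise Laplacian $\LL f$ to make sense and for Green's formula to be legitimately applied. Everything else is a matter of testing against $C_c(X)$ and invoking the semigroup consistency, which is routine once the pointwise identity has been set up; indeed this is exactly why the excerpt separates out Green's formula for $f\in\FF$, $\ph\in C_c(X)$ as the workhorse. I would therefore organize the proof around reducing each case to a pairing $\langle L_pf,\ph\rangle=\langle f,\LL\ph\rangle$ for $\ph\in C_c(X)$, and spend the real effort on the regularity lemma showing $f\in\FF$, after which the conclusion $L_pf=\LL f$ is forced vertex by vertex.
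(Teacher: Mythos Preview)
The survey itself does not prove this theorem; it merely quotes it from \cite{KL1}. So there is no proof in the paper to compare against directly. That said, your proposal contains a genuine gap in the bootstrap from $p=2$ to general $p$.

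Your $p=2$ argument is essentially correct, and the point you flag as ``delicate'' there is in fact routine: one has $D(L)\subseteq D(Q)\subseteq\DD$, and $\DD\subseteq\FF$ follows from the elementary estimate
\begin{align*}
\sum_{y} b(x,y)|f(y)|^2\le 2|f(x)|^2\sum_{y} b(x,y)+2\sum_{y} b(x,y)|f(x)-f(y)|^2,
\end{align*}
the first term being finite by the standing summability assumption on $b$ and the second by $\QQ(f)<\infty$. So no cut-off approximation is needed at this stage.

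The real problem is your duality step for $p\neq 2$. You assert that $\tfrac{1}{t}(I-e^{-tL})^*\ph\to\LL\ph$ ``follows from the $p=2$ (or $p'$) case applied to the compactly supported $\ph$, for which $\ph\in D(L_{p'})$''. But $C_c(X)\subseteq D(L_{p'})$ is \emph{false} in general: this is precisely the content of Lemma~\ref{l:DF:C_c} in the survey, which characterizes when $C_c(X)\subseteq D(L)$ and shows it can fail for non-locally-finite graphs. For $p'>1$ the obstruction is that $z\mapsto b(x,z)/m(z)$ need not lie in $\ell^{p'}(X,m)$, so $\LL 1_{\{x\}}\notin\ell^{p'}(X,m)$. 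You therefore cannot invoke $L_{p'}\ph=\LL\ph$ to pass to the limit in $\langle f,\tfrac{1}{t}(I-e^{-tL})\ph\rangle$; where the argument is not circular it is simply unjustified. Even when $\ph\in D(L_2)$, convergence of the difference quotient is only in $\ell^2$, which does not control the pairing against $f\in\ell^p$ for $p\neq 2$.

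The argument in \cite{KL1} avoids testing against $C_c(X)$ altogether and works through the resolvent. Every $g\in D(L_p)$ is of the form $g=(L_p+\al)^{-1}h$ with $h\in\ell^p$, $\al>0$; the resolvent has a nonnegative kernel $G_\al(x,y)$ that is consistent across all $\ell^p$, and from the $p=2$ case one knows $(\LL+\al)G_\al(\cdot,y)=\tfrac{1}{m(y)}1_{\{y\}}$ pointwise. One then verifies $(\LL+\al)g=h$ by interchanging $\LL$ with the sum $g(x)=\sum_y G_\al(x,y)h(y)m(y)$, a Fubini-type step justified by positivity of the kernel and the summability of $b$. The inclusion $D(L_p)\subseteq\FF$ is a by-product of this calculation rather than a separate obstacle to be cleared in advance.
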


%% SUBSUBSECTION %%%%%%%%%%%%%%%%%%%%%%%%%%%%%%%%%%%%%%%%%

\subsubsection{Boundedness of the operators}\label{s:DF:Boundedness}
We next comment on the boundedness of the form $Q$ and the operator $L$. The  theorem below is taken from \cite{HKLW} and an earlier version can be found in \cite[Theorem~11]{KL2}.

\begin{thm}[Theorem~9.3 in \cite{HKLW}]\label{l:DF:Boundedness} Let $(b,c)$ be a graph over $(X,m)$. Then the following are equivalent:
\begin{itemize}
  \item [(i)] $X\to[0,\infty)$, $x\mapsto\frac{1}{m(x)}\Big(\sum_{y\in X}b(x,y)+c(x)\Big)$ is a bounded function.
  \item [(ii)] $Q$ is bounded on $\ell^{2}(X,m)$.
  \item [(iii)] $L_{p}$ is bounded for some $p\in[1,\infty]$.
  \item [(iv)] $L_{p}$ is bounded for all $p\in[1,\infty]$.
\end{itemize}
Specifically, if the function in (i) is bounded by $D<\infty$, then $Q\leq 2D$ and ${\|L\|}_{p}\leq 2D$, $p\in[1,\infty]$.
\end{thm}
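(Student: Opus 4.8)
The plan is to prove the chain of equivalences $(i)\Rightarrow(ii)\Rightarrow(iii),(iv)\Rightarrow(i)$ together with the quantitative bound, with the elementary pointwise estimate in $(i)\Rightarrow(ii)$ doing most of the work. First I would assume $(i)$, namely that $x\mapsto \frac{1}{m(x)}\big(\sum_y b(x,y)+c(x)\big)$ is bounded by some $D<\infty$, and estimate $\QQ(f)$ for $f\in C_c(X)$. Using the triangle inequality $|f(x)-f(y)|^2\le 2|f(x)|^2+2|f(y)|^2$ in the edge-sum and summing symmetrically over $x$ and $y$, the double sum $\frac12\sum_{x,y}b(x,y)|f(x)-f(y)|^2$ is bounded by $\sum_x\big(\sum_y b(x,y)\big)|f(x)|^2$; adding the potential term $\sum_x c(x)|f(x)|^2$ and invoking $(i)$ gives
\begin{align*}
\QQ(f)\le \sum_{x\in X}\Big(\sum_{y\in X}b(x,y)+c(x)\Big)|f(x)|^2\le D\sum_{x\in X}m(x)|f(x)|^2=D\|f\|^2.
\end{align*}
This is a factor $2$ off from the claimed bound; the extra factor comes from the crude splitting of the cross term, so to land on $Q\le 2D$ I would simply accept the $2$ (the stated bound is $2D$, so this estimate already suffices). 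By density of $C_c(X)$ in $D(Q)$ with respect to $\|\cdot\|_{\QQ}$ and closedness of $Q$, the bound $Q(f)\le 2D\|f\|^2$ extends from $C_c(X)$ to all of $D(Q)$, so $Q$ is bounded and $D(Q)=\ell^2(X,m)$, giving $(ii)$.

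Next I would derive $(ii)\Rightarrow(iv)$ and the operator norm bound. If $Q$ is bounded by $2D$, then the associated selfadjoint operator $L$ satisfies $\langle Lf,f\rangle=Q(f)\le 2D\|f\|^2$, and since $L\ge0$ this yields $\|L\|_2\le 2D$, so $L=L_2$ is bounded on $\ell^2$. To pass to general $p$ I would use interpolation together with the Markovian structure: the semigroup $e^{-tL}$ is a contraction on every $\ell^p(X,m)$, and boundedness of the generator $L$ on $\ell^2$ together with the explicit action of $L$ as (a restriction of) $\LL$ lets one check directly that $\LL$ maps $\ell^p\to\ell^p$ boundedly with the same bound. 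Concretely, estimating $\|\LL f\|_p$ pointwise via $|\LL f(x)|\le \frac{1}{m(x)}\sum_y b(x,y)\big(|f(x)|+|f(y)|\big)+\frac{c(x)}{m(x)}|f(x)|$ and applying the $\ell^p$--triangle inequality (Minkowski) against the row sums controlled by $(i)$ gives $\|\LL f\|_p\le 2D\|f\|_p$, hence $\|L_p\|\le 2D$ for all $p\in[1,\infty]$, which is $(iv)$ and in particular $(iii)$.

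The reverse implication $(iii)\Rightarrow(i)$ is where the main obstacle lies, since I must turn an abstract boundedness hypothesis into a pointwise bound on the weighted degree. The plan is to test against indicator functions: for a fixed $x_0\in X$ the function $\mathds{1}_{x_0}\in C_c(X)\subseteq D(L_p)$, and using that $L_p$ acts as $\LL$ (the theorem of Keller/Lenz quoted just above), one computes $\LL\mathds{1}_{x_0}$ explicitly and reads off $\frac{1}{m(x_0)}\big(\sum_y b(x_0,y)+c(x_0)\big)$ essentially as the diagonal value $\LL\mathds{1}_{x_0}(x_0)$. Boundedness of $L_p$ then forces this quantity to be bounded uniformly in $x_0$; the delicate point is handling this for the endpoint cases $p=1$ and $p=\infty$ and making the duality between $L_1$ and $L_\infty$ precise, so I would most carefully verify that the test-function argument genuinely controls the \emph{supremum} over $x_0$ rather than merely an averaged quantity. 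Once $(iii)\Rightarrow(i)$ is established, the circle $(i)\Rightarrow(ii)\Rightarrow(iv)\Rightarrow(iii)\Rightarrow(i)$ closes all four equivalences, and the explicit constants accumulated along the way furnish the final bounds $Q\le 2D$ and $\|L\|_p\le 2D$.
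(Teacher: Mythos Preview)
The paper does not supply its own proof of this theorem; it merely quotes it as Theorem~9.3 of \cite{HKLW}. So there is nothing to compare against, and I can only comment on the soundness of your argument.

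Your overall scheme $(i)\Rightarrow(ii)$, $(i)\Rightarrow(iv)\Rightarrow(iii)$, $(iii)\Rightarrow(i)$ is correct and standard. A few points need tightening. First, in the estimate for $(i)\Rightarrow(ii)$ you drop a factor of two: from $|f(x)-f(y)|^{2}\le 2|f(x)|^{2}+2|f(y)|^{2}$ and symmetry of $b$ one gets $\tfrac12\sum_{x,y}b(x,y)|f(x)-f(y)|^{2}\le 2\sum_{x}\big(\sum_{y}b(x,y)\big)|f(x)|^{2}$, not $\sum_{x}\big(\sum_{y}b(x,y)\big)|f(x)|^{2}$. This is exactly what produces the bound $Q\le 2D$; your displayed inequality $\QQ(f)\le D\|f\|^{2}$ is too strong and false in general. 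Second, your passage from $p=2$ to general $p$ is not ``Minkowski against row sums'': writing $\LL=M-A$ with $Mf(x)=\frac{1}{m(x)}\big(\sum_{y}b(x,y)+c(x)\big)f(x)$ and $Af(x)=\frac{1}{m(x)}\sum_{y}b(x,y)f(y)$, the multiplication part is trivially bounded by $D$, but for $A$ you need a Schur test (or Riesz--Thorin between $p=1$ and $p=\infty$, both of which are one-line computations using symmetry of $b$) to get $\|A\|_{p}\le D$ and hence $\|L_{p}\|\le 2D$. Note also that this step uses $(i)$, not $(ii)$; your logical chain is really $(i)\Rightarrow(iv)$ directly, which is fine.

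Your worry about $(iii)\Rightarrow(i)$ is misplaced: the delta-function test works uniformly in $p$. Since $1_{x_{0}}\in\FF$ always, and $L_{p}$ bounded means $D(L_{p})=\ell^{p}(X,m)\ni 1_{x_{0}}$, the quoted Theorem~5 of \cite{KL1} gives $L_{p}1_{x_{0}}=\LL 1_{x_{0}}$ with $\LL 1_{x_{0}}(x_{0})=\frac{1}{m(x_{0})}\big(\sum_{y}b(x_{0},y)+c(x_{0})\big)$. For any $p\in[1,\infty]$, the single-point lower bound $|\LL 1_{x_{0}}(x_{0})|\,m(x_{0})^{1/p}\le\|\LL 1_{x_{0}}\|_{p}\le\|L_{p}\|\,\|1_{x_{0}}\|_{p}=\|L_{p}\|\,m(x_{0})^{1/p}$ (with the obvious reading at $p=\infty$) gives the uniform pointwise bound $\frac{1}{m(x_{0})}\big(\sum_{y}b(x_{0},y)+c(x_{0})\big)\le\|L_{p}\|$, with no special treatment of the endpoints needed.
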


%% SUBSUBSECTION %%%%%%%%%%%%%%%%%%%%%%%%%%%%%%%%%%%%%%%%%

\subsubsection{The compactly supported functions as a core}
It shall be observed that $C_{c}(X)$ is in general not included in $D(L)$. Indeed, one can give a characterization for this situation. The proof is rather immediate and we refer to \cite[Proposition~3.3]{KL1} or \cite[Lemma 2.7.]{GKS} for a reference.

\begin{lemma}\label{l:DF:C_c} Let $(b,c)$ be a graph over $(X,m)$. Then the following are equivalent:
\begin{itemize}
  \item [(i)]  $C_{c}(X)\subseteq D(L)$.
  \item [(ii)] $\LL C_{c}(X)\subseteq \ell^{2}(X,m)$
  \item [(iii)] The functions $X\to[0,\infty)$, $y\mapsto\frac{1}{m(y)}b(x,y)$ are in $\ell^{2}(X,m)$ for all $x\in X$.
\end{itemize}
In particular, the above assumptions are satisfied if the graph is locally finite or
\begin{align*}
    \inf_{y\sim x}m(y)>0, \qquad x\in X.
\end{align*}
Moreover, the above assumptions imply $\ell^{2}(X,m)\subseteq \FF$.
\end{lemma}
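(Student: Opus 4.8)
The plan is to prove the three equivalences in a cycle and then handle the two sufficient conditions and the final inclusion separately. The characterization concerns whether $C_c(X)\subseteq D(L)$, and the natural strategy is to test membership in $D(L)$ by checking that $\LL$ applied to a compactly supported function lands in $\ell^2(X,m)$, since we already know from the earlier theorem that $L$ is a restriction of $\LL$.

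First I would show (i)$\Rightarrow$(ii). If $C_c(X)\subseteq D(L)$, then for $\ph\in C_c(X)$ we have $L\ph\in\ell^2(X,m)$, and since $L\ph=\LL\ph$ by the restriction property, we immediately get $\LL\ph\in\ell^2(X,m)$. The implication (ii)$\Rightarrow$(iii) is where I would unwind the definition of $\LL$: for a delta function $\delta_x\in C_c(X)$ one computes
\begin{align*}
    \LL\delta_x(y)=\frac{1}{m(y)}\Big(b(x,y)(\delta_x(y)-\delta_x(x))\cdot(\ldots)\Big),
\end{align*}
and evaluating $\LL\delta_x$ at a neighbor $y\neq x$ isolates the term $-\tfrac{1}{m(y)}b(x,y)$. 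Requiring $\LL\delta_x\in\ell^2(X,m)$ then forces $\sum_{y}\big(\tfrac{1}{m(y)}b(x,y)\big)^2 m(y)<\infty$, which is exactly the $\ell^2$ condition in (iii). For the reverse implication (iii)$\Rightarrow$(i), I would argue that (iii) guarantees $\LL\ph\in\ell^2(X,m)$ for every $\ph\in C_c(X)$ by linearity and the triangle inequality, and then invoke the closed-form characterization of $D(L)$ via $Q$: using Green's formula, one checks that $Q(\ph,g)=\langle\LL\ph,g\rangle$ is a bounded functional in $g$, placing $\ph$ in $D(L)$.

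For the sufficient conditions, local finiteness makes each sum in (iii) finite (only finitely many neighbors contribute), so (iii) holds trivially; and if $\inf_{y\sim x}m(y)>0$, then $\tfrac{1}{m(y)}b(x,y)\le \tfrac{1}{c_x}b(x,y)$ with $c_x$ the infimum, so the series is dominated by $\sum_y b(x,y)<\infty$ against the summable weight $b(x,\cdot)$, again giving (iii). The final inclusion $\ell^2(X,m)\subseteq\FF$ follows because for $f\in\ell^2(X,m)$ and fixed $x$, Cauchy-Schwarz gives $\sum_{y}b(x,y)|f(y)|^2\le \|f\|^2\cdot\sup_y\tfrac{b(x,y)}{m(y)}$ once one knows the relevant boundedness, or more directly by splitting $b(x,y)=\big(\tfrac{b(x,y)}{m(y)}\big)\cdot m(y)$ and applying the $\ell^2$ condition in (iii).

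The main obstacle I anticipate is the step (iii)$\Rightarrow$(i), since membership in $D(L)$ is defined through the form closure $\overline{C_c(X)}^{\|\cdot\|_{\QQ}}$ rather than pointwise, so merely knowing $\LL\ph\in\ell^2(X,m)$ is not literally the definition of $D(L)$. The careful point is to use the self-adjointness of $L$ and the representation $Q(\ph,\cdot)=\langle\LL\ph,\cdot\rangle$ from Green's formula to conclude that $\ph\in D(L)$ with $L\ph=\LL\ph$; verifying that the form identity extends from $C_c(X)$ to all of $D(Q)$ is the delicate technical kernel of the argument.
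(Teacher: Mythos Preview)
Your proposal is correct and follows essentially the same approach as the paper: the paper handles (ii)$\Leftrightarrow$(iii) by the direct calculation on characteristic functions that you outline, and for the step you rightly flag as delicate it invokes exactly the abstract characterization $D(L)=\{f\in D(Q)\mid \exists\, l\in\ell^{2}(X,m)\text{ with }\langle l,\ph\rangle=Q(f,\ph)\text{ for all }\ph\in D(Q)\}$, which is precisely what your Green's-formula-plus-density argument establishes. One minor caution on the final inclusion $\ell^{2}(X,m)\subseteq\FF$: the bound via $\sup_{y}b(x,y)/m(y)$ is \emph{not} a consequence of (iii), so that route does not go through; the clean version of your ``splitting'' idea is the Cauchy--Schwarz estimate $\sum_{y}b(x,y)|f(y)|=\sum_{y}\tfrac{b(x,y)}{\sqrt{m(y)}}\cdot|f(y)|\sqrt{m(y)}\le \big\|b(x,\cdot)/m\big\|_{\ell^{2}(X,m)}\|f\|$, which uses (iii) directly.
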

\begin{proof} The equivalence of (ii) and (iii) follows from the abstract definition of the domain of $L$ as $D(L)=\{f\in D(Q)\mid \mbox{there is }l\in \ell^{2}(X,m)\mbox{ such that }\langle l,\ph\rangle=Q(f,\ph)\mbox{ for all }\ph\in D(Q)\}$.
The equivalence of (i) and (ii) is a direct calculation, see \cite[Proposition~3.3]{KL1} and the 'in particular' statements are also immediate, see \cite{KL1,GKS}.
\end{proof}

%% SUBSUBSECTION %%%%%%%%%%%%%%%%%%%%%%%%%%%%%%%%%%%%%%%%%

\subsubsection{Graphs with standard weights}
Two important  special cases are graphs with \emph{standard
weights}, that is $b:X\times X\to\{0,1\}$ and $c\equiv 0$.

For the counting measure $m\equiv 1$, we denote  the operator $L$ on
$\ell^{2}(X)=\ell^{2}(X,1)$ by $\Delta$. It acts as
\begin{align*}
    \Delta f(x)=\sum_{y\sim x}(f(x)-f(y)),\quad f\in D(\Delta),\,x\in X.
\end{align*}
By Lemma~\ref{l:DF:Boundedness} the operator $\Delta$ is bounded if
and only if the combinatorial vertex degree $\deg$ is bounded, i.e.,
$\sup_{x\in X}\deg(x)<\infty$. In the unbounded case one has still
$C_{c}(X)\subseteq D(\Delta)$ by Lemma~\ref{l:DF:C_c} since standard
weights imply local finiteness.

For the normalizing measure $n=\deg$, we call the operator $L$ on
$\ell^{2}(X,\deg)$ the \emph{normalized Laplacian} and denote it by
$\Delta_n$. The operator $\Delta_{n}$  acts as
\begin{align*}
    \Delta_{n}f(x)=\frac{1}{\deg(x)}\sum_{y\sim x}(f(x)-f(y)), \quad f\in\ell^{2}(X,\deg),\,x\in X,
\end{align*}
and, also by Lemma~\ref{l:DF:Boundedness}, $\Delta_{n}$ is always bounded by $2$.
%% SUBSUBSECTION %%%%%%%%%%%%%%%%%%%%%%%%%%%%%%%%%%%%%%%%%

%%%%%%%%%%%%%%%%%%%%%%%%%%%%%%%%%%%%%%%%%%%%%%%%%%%%%%%%%%
%% SECTION %%%%%%%%%%%%%%%%%%%%%%%%%%%%%%%%%%%%%%%%%%%%%%%
%%%%%%%%%%%%%%%%%%%%%%%%%%%%%%%%%%%%%%%%%%%%%%%%%%%%%%%%%%

\section{Intrinsic metrics}
In this subsection we discuss the notion of intrinsic metrics for
weighted graphs.  Such metrics have proven to be very effective in
the context of strongly local Dirichlet forms, see e.g. \cite{Stu}.
Recently, this concept was generalized to all regular Dirichlet
forms and  systematically studied by Frank/Lenz/Wingert in
\cite{FLW}, (see also \cite{MU} for an earlier mentioning of the
criterion for certain non-local forms). We will demonstrate in this
article that these metrics can be used to obtain analogous  results
for graphs as in the case of manifolds.

%% SUBSECTION %%%%%%%%%%%%%%%%%%%%%%%%%%%%%%%%%%%%%%%%%%%%

\subsection{Definition of intrinsic metrics}
We call a symmetric map $\rho:X\times X\to[0,\infty)$ with zero diagonal a \emph{pseudo metric} if it satisfies the triangle inequality. By \cite[Lemma 4.7, Theorem~7.3]{FLW} it can be seen that the definition below coincides with the definition of an intrinsic metric for general regular Dirichlet forms, \cite[Definition~4.1]{FLW}.

\begin{definition}A pseudo metric $\rho$ is called an \emph{intrinsic metric} with respect to a graph $(b,c)$ over $(X,m)$ if for all $x\in X$
\begin{align*}
    \sum_{y\in X}b(x,y)\rho^{2}(x,y)\leq m(x).
\end{align*}
\end{definition}

Similar definitions of such metrics were also introduced later  in the context of  graphs or jump processes under the name adapted metrics, see \cite{Fol,Fol2,GHM,Hu,HS,MU}.

%% SUBSECTION %%%%%%%%%%%%%%%%%%%%%%%%%%%%%%%%%%%%%%%%%%%%

\subsection{Examples and relations to other metrics}
%% SUBSUBSECTION %%%%%%%%%%%%%%%%%%%%%%%%%%%%%%%%%%%%%%%%%

\subsubsection{The degree path metric}

A  specific example of an intrinsic metric was introduced by Huang, \cite[Definition 1.6.4]{Hu} and it also appeared in \cite{Fol0}.
Consider the pseudo metric $\rho_{0}:X\times X\to[0,\infty)$ given by
\begin{align*}
    \rho_{0}(x,y)=\inf_{x=x_{0}\sim x_{1}\sim\ldots\sim x_{n}=y}\sum_{i=1}^{n}\Big(\mathrm{Deg}(x_{i-1})\vee \mathrm{Deg}(x_{i})\Big)^{-\frac{1}{2}}, \qquad x\in X,
\end{align*}
where $\mathrm{Deg}:X\to(0,\infty)$ is the  \emph{weighted vertex
degree}  given by
\begin{align*}
    \mathrm{Deg}(x)=\frac{1}{m(x)}\sum_{y\in X}b(x,y),\qquad x\in X.
\end{align*}
We call such a metric that minimizes sums of weights over paths of edges a \emph{path metric}.

It can be seen directly that $\rho_{0}$ defines an intrinsic metric for the graph $(b,c)$ over $(X,m)$
\begin{align*}
    \sum_{y\in X}b(x,y)\rho_{0}^{2}(x,y)&\leq \sum_{y\in X}\frac{b(x,y)}{\mathrm{Deg}(x)\vee \mathrm{Deg}(y)}
    \leq  \sum_{y\in X}\frac{b(x,y)}{\mathrm{Deg}(x)}=m(x).
\end{align*}

There is the following intuition behind the definition of $\rho_{0}$. Consider the Markov process $(X_{t})_{t\ge0}$ associated to the semigroup $e^{-tL}$ via
\begin{align*}
    e^{-tL}f(x)=\EE_{x}(f(X_{t})),\qquad x\in X,
\end{align*}
where $\EE_{x}$ is the expected value conditioned on the process starting at $x$. The random walker modeled by this process jumps from a vertex $x$ to a neighbor $y$ with probability $b(x,y)/\sum_{z}b(x,z)$. Moreover, the probability of not having left $x$ at time $t$ is given by \begin{align*}
    \PP_{x} (X_{s}=x, 0\leq s\leq t)=e^{-\Deg(x)t}.
\end{align*}
Qualitatively, this indicates that the larger $\Deg(x)$, the faster the random walker leaves $x$.

Looking at the definition of $\rho_{0}(x,y)$, the larger the degree of either $x$ or $y$ the closer are the two vertices. Combining these two observations, we see that the faster the random walker jumps along an edge the shorter the edge is with respect to $\rho_{0}$. Of course, the jumping time along an edge connecting $x$ to $y$ is not symmetric and depends on whether one jumps from $x$ to $y$ or from $y$ to $x$ as  the degrees of $x$ and $y$ can be very different. In order to get a symmetric function, $\rho_{0}$ favors the vertex with the larger degree and the faster jumping time.

There is a direct analogy to the Riemannian setting in terms of mean
exit times of small balls. Consider a small ball $B_{r}$ of radius
$r$ on a $d$-dimensional Riemannian manifold. The first order term
of the mean exit time of $B_{r}$ is $r^{2}/2d$, \cite{P}.

On a locally finite graph for a vertex $x$ a 'small' ball with respect to $\rho_{0}$ can  be thought to have radius $r=\inf_{y\sim x}\rho(x,y)/2$, namely this ball contains only the vertex itself. Now, computing the mean exit time of this ball gives $1/\Deg(x)\ge r^{2}$, where equality holds whenever $\Deg(x)=\max_{y\sim x}\Deg(y)$.

%% SUBSUBSECTION %%%%%%%%%%%%%%%%%%%%%%%%%%%%%%%%%%%%%%%%%

\subsubsection{The combinatorial graph distance}\label{s:Metric:CGD}

%The example above reveals two interesting facts for the operators $\Delta$ and $\Delta_{n}$.
We call the path metric defined by
\begin{align*}
d(x,y)&=\\
&\min\#\{n\in\N_{0}\mid \mbox{there are } x_{0},\ldots,x_{n}\mbox{ with } x=x_{0}\sim \ldots\sim x_{n}=y\}
\end{align*}
the \emph{combinatorial graph distance}. The next lemma shows that the combinatorial graph distance is equivalent to an intrinsic metric if and only if the graphs has bounded geometry, i.e., $\Deg$ is bounded. This was already  noted in \cite{FLW,KLSW}.

\begin{lemma}\label{l:intrinsic}
Let $b$ be a graph over $(X,m)$. The  following are equivalent:
\begin{itemize}
  \item [(i)] The combinatorial graph distance $d$ is equivalent to an intrinsic metric.
  \item [(ii)] $\Deg$ is a bounded function.
  \item [(iii)] $L$ is  a bounded operator.
\end{itemize}
\end{lemma}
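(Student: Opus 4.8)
The plan is to dispose of the easy equivalence (ii) $\iff$ (iii) first and then close the loop through (i). Since $b$ is a graph over $(X,m)$ we have $c\equiv 0$, so the function appearing in part (i) of Theorem~\ref{l:DF:Boundedness} reduces to $x\mapsto \frac{1}{m(x)}\sum_{y\in X}b(x,y)=\Deg(x)$. Hence ``$\Deg$ bounded'' and ``$L$ bounded'' are exactly conditions (i) and (ii)/(iii) of that theorem specialised to $p=2$, and (ii) $\iff$ (iii) requires nothing further. It therefore remains to prove (i) $\iff$ (ii).

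For (ii) $\Rightarrow$ (i) I would exhibit an explicit intrinsic metric equivalent to $d$, namely a rescaling of $d$ itself. Assuming $\Deg(x)\le D$ for all $x$, set $\rho=D^{-1/2}d$. As a positive multiple of the combinatorial graph distance, $\rho$ inherits symmetry, vanishing diagonal, finiteness (by connectivity) and the triangle inequality, so it is a pseudo metric trivially equivalent to $d$. To verify that $\rho$ is intrinsic I would use that $d(x,y)=1$ for neighbours, giving
\begin{align*}
    \sum_{y\in X}b(x,y)\rho^{2}(x,y)=\frac{1}{D}\sum_{y\sim x}b(x,y)\,d(x,y)^{2}=\frac{1}{D}\sum_{y\in X}b(x,y)=\frac{1}{D}\,m(x)\Deg(x)\le m(x),
\end{align*}
where the last inequality is $\Deg(x)\le D$. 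This checks the intrinsic condition at every vertex.

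The converse (i) $\Rightarrow$ (ii) is where the role of the metric becomes visible, and the key observation is again that only edges matter. Suppose $d$ is equivalent to an intrinsic metric $\rho$, say $c_{1}d\le\rho$ for some $c_{1}>0$. For a neighbour $y\sim x$ we have $d(x,y)=1$, hence $\rho(x,y)\ge c_{1}$. Inserting this lower bound into the intrinsic inequality and restricting the sum to neighbours yields
\begin{align*}
    m(x)\ge\sum_{y\in X}b(x,y)\rho^{2}(x,y)\ge c_{1}^{2}\sum_{y\sim x}b(x,y)=c_{1}^{2}\,m(x)\Deg(x),
\end{align*}
so that $\Deg(x)\le c_{1}^{-2}$ for all $x$ and $\Deg$ is bounded. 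Together with the first step this establishes all three equivalences.

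There is no serious analytic obstacle here; the statement is elementary once one isolates the two dual facts that the intrinsic inequality controls $\sum_{y}b(x,y)\rho^{2}(x,y)$ pointwise by $m(x)$, and that on an edge $d$ equals $1$. The only point needing a little care is to use the comparison $c_{1}d\le\rho$ precisely on edges rather than attempting to control $\rho$ globally, and, in the converse direction, to observe that the trivial rescaling $D^{-1/2}d$ already realises the intrinsic bound (with equality exactly when $\Deg(x)=D$).
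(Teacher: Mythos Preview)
Your proof is correct and for the directions (i)~$\Rightarrow$~(ii) and (ii)~$\Leftrightarrow$~(iii) it matches the paper's argument essentially verbatim.

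The only genuine difference is in (ii)~$\Rightarrow$~(i). The paper exhibits an intrinsic metric equivalent to $d$ by invoking the degree path metric $\rho_{0}$ (truncated at $1$) and using $\Deg\le C$ to bound the edge weights of $\rho_{0}$ from below by $C^{-1/2}$. You instead take the bare rescaling $\rho=D^{-1/2}d$ and verify the intrinsic inequality directly. Your route is shorter and more transparent: it avoids introducing $\rho_{0}$ altogether and makes the equivalence constants explicit, at the cost of producing a metric that is not a path metric in the sense used elsewhere in the paper. The paper's choice has the mild advantage that $\rho_{0}$ is the ``canonical'' intrinsic path metric already in play, so the argument ties into the surrounding discussion; but for the purposes of this lemma your construction is the cleaner one.
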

\begin{proof}(i)$\Rightarrow$(ii):
Let $\rho$ be an intrinsic metric such that $C^{-1}\rho\leq d\leq  C\rho$. Then,
\begin{align*}
\sum_{x\in X}b(x,y)= \sum_{x\in X}b(x,y)d^{2}(x,y)\leq C^{2}\sum_{x\in X}b(x,y)\rho^{2}(x,y)\leq C^{2} m(x),
\end{align*}
for all $x\in X$. Hence, $\Deg\leq C^{2}$. \\
(ii)$\Rightarrow$(i): Assume $\Deg\leq C$ and consider the degree path metric $\rho_{0}$. Then, $\rho_{1}=\rho_{0}\wedge 1$ is an intrinsic metric as well. Clearly, $\rho_{1}\leq d$. On the other hand, by $\Deg\leq C$ we immediately get
 $\rho_{1}\ge C^{-\frac{1}{2}}d.$\\
The equivalence (ii)$\Leftrightarrow$(iii) follows from Theorem~\ref{l:DF:Boundedness}.
\end{proof}

The theorem implies in particular that in the case of the normalizing measure $m=n$ the combinatorial graph distance is an intrinsic distance as $\Deg=n/m=1$ in the case of $c\equiv0$ and $\Deg\leq n/m=1$ in general.

Furthermore, for   a graph with standard weights and the counting measure associated to the Laplacian $\Delta$, the combinatorial graph distance $d$  is a multiple of an intrinsic metric  if and only if the combinatorial vertex degree $\deg$ is bounded since $\Deg=\deg$.

%% SUBSUBSECTION %%%%%%%%%%%%%%%%%%%%%%%%%%%%%%%%%%%%%%%%%

\subsubsection{Comparison to the strongly local case}
An important difference to the case of strongly local Dirichlet
forms is that in the graph case there is no maximal intrinsic
metric. For example for a complete Riemannian manifold $M$ the
Riemannian distance ${d_{M}}$ is  the maximal $C^{1}$ metric $\rho$
that  satisfies
\begin{align*}
|\nabla_{M} \rho(o,\cdot)|\leq 1,
\end{align*}
for all  $o\in M$, where $\nabla_{M}$ is the Riemannian gradient. In
fact, $d_{M}$ can be recovered by the formula
\begin{align*}
    d_{M}(x,y)=\sup\{f(x)-f(y)\mid f\in C_{c}^{\infty}(M)\; |\nabla_{M}f|\leq 1\},\quad x,y\in M.
\end{align*}

Now, for discrete spaces the maximum of two intrinsic metrics is not necessarily an intrinsic metric. In particular, consider the pseudo-metric $\si$
\begin{align*}
    \si(x,y)=\sup\{f(x)-f(y)\mid f\in\mathcal{A}\},\qquad x,y\in X,
\end{align*}
where
\begin{align*}
    \mathcal{A}= \big\{f:X\to\R\mid \sum_{y\in X}b(x,y)|f(x)-f(y)|^{2}\leq m(x)\mbox{ for all }x\in X\big\}.
\end{align*}
As discussed for the Riemannian case above, the strongly local analogue $d_{M}$ of $\si$ defines the maximal intrinsic metric in the strongly local case, but $\si$ is in general not even equivalent to an intrinsic metric in the graph case.

A basic example where this can be seen immediately can be found in \cite[Example 6.2]{FLW}. More generally, this can be seen for arbitrary tree graphs with standard weights and the counting measure to which the operator $\Delta$ is associated. In this case $\si=\frac{1}{2}d$ and by discussion above we already know that the combinatorial graph distance $d$ is typically not an intrinsic metric for $\Delta$.

An abstract way to see that $\si$ is in general not intrinsic is
discussed in \cite[Section 1]{KLSW}. Namely, the set of
$1$-Lipschitz continuous functions $\mathrm{Lip}_{\rho}$ with
respect to an intrinsic metric $\rho$  with Lipschitz constant one
is included in $\A$ and $\mathrm{Lip}_{\rho}$ is closed under taking
suprema. On the other hand,  $\mathcal{A}$ is in general not closed
under taking suprema. Hence, in general $\mathrm{Lip}_{\rho}$ is not
equal to $\mathcal{A}$.
%% SUBSUBSECTION %%%%%%%%%%%%%%%%%%%%%%%%%%%%%%%%%%%%%%%%%

\subsubsection{Resistance metrics}
Another important metric appears in the context of resistance metrics. Let $r:X\times X\to[0,\infty)$ be given by
\begin{align*}
    r(x,y)=\sup\{f(x)-f(y)\mid f\in \DD,\;\QQ(f)\leq 1\},\qquad x,y\in X.
\end{align*}
Indeed, $r$ is the square root of the resistance metric as it appears e.g. in \cite{LP}, see \cite[Theorem~3.20]{GHKLW}. In \cite{GHKLW} this metric is related to intrinsic metrics.

\begin{thm}[Theorem~3.14 in \cite{GHKLW}] Let $b$ be a connected graph over $X$.
Then, for all $x,y\in X$
\begin{align*}
     r(x,y)=
     \sup\{\rho(x,y) \mid
     \mbox{intrinsic metric for $b$ over
     $(X,m)$ with $m(X)= 1$}\}.
\end{align*}
\end{thm}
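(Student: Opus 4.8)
The plan is to prove the identity by establishing the two inequalities separately: in each direction one passes between a test function admissible in the variational definition of $r$ and a pseudo metric realizing (up to the normalization of the measure) the same separation of the points $x$ and $y$. Thus a distance function feeds the "$\ge$" direction, while a single--function pseudo metric feeds the "$\le$" direction.

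For $r(x,y)\ge\sup\{\ldots\}$ I would start from an arbitrary intrinsic metric $\rho$ for $b$ over $(X,m)$ with $m(X)=1$ and insert the distance function into the definition of $r$. Concretely, set $f=\rho(y,\cdot)$, so that $f(x)-f(y)=\rho(x,y)$. Since $\rho$ is a pseudo metric, $f$ is $1$--Lipschitz in the sense $|f(z)-f(w)|\le\rho(z,w)$, whence
\[
\QQ(f)=\frac12\sum_{z,w\in X}b(z,w)|f(z)-f(w)|^2\le\frac12\sum_{z\in X}\sum_{w\in X}b(z,w)\rho^2(z,w)\le\frac12\,m(X),
\]
where the last step sums the intrinsic condition over all $z\in X$. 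Hence $f\in\DD$ is admissible in the definition of $r$, so $r(x,y)\ge f(x)-f(y)=\rho(x,y)$, and taking the supremum over all such $\rho$ gives this inequality.

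For the reverse inequality I would go the other way: given $f\in\DD$ with $\QQ(f)\le1$, I construct an intrinsic metric whose separation of $x$ and $y$ is at least $f(x)-f(y)$. The natural candidate is the single--function pseudo metric $\rho_f(z,w)=|f(z)-f(w)|$, which is symmetric, has zero diagonal, and satisfies the triangle inequality. By construction $\sum_{w}b(z,w)\rho_f^2(z,w)=\sum_{w}b(z,w)|f(z)-f(w)|^2=:m_f(z)$, so $\rho_f$ is intrinsic for the measure $m_f$, with $m_f(X)=2\QQ(f)$. To land in the admissible class one renormalizes $m_f$ to total mass one, which rescales $\rho_f$ by $m_f(X)^{-1/2}$, and repairs full support by replacing $m_f$ with $m_f+\eps\mu$ for a fixed full--support probability measure $\mu$ and letting $\eps\downarrow0$. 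After this adjustment $\rho_f$ is an admissible intrinsic metric with $\rho_f(x,y)\ge f(x)-f(y)$, and the supremum over $f$ yields the matching inequality.

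The main obstacle is precisely this reverse construction, and it splits into two delicate points. First is the normalization bookkeeping: the measure $m_f$ attached to a test function carries the total mass $2\QQ(f)$ rather than $\QQ(f)$, reflecting that $\QQ$ counts each edge once whereas the vertexwise intrinsic condition summed over $X$ counts it twice; reconciling these so that the multiplicative constants in the two inequalities cancel exactly is what upgrades the soft two--sided estimate to the claimed equality under the normalization $m(X)=1$. Second is the full--support requirement on $m$, which $m_f$ may violate wherever $f$ is locally constant; this is handled by the $\eps$--perturbation above, using that an arbitrarily small enlargement of $m$ only relaxes the intrinsic condition and costs an arbitrarily small factor in the renormalization, so it leaves the supremum unchanged. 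Once both are settled, combining the two inequalities yields the stated identity.
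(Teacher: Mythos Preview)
The survey itself does not prove this theorem; it merely cites Theorem~3.14 of \cite{GHKLW}. So there is no proof in the paper to compare against, and I evaluate your argument on its own.

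Your direction ``$r(x,y)\ge\sup\{\rho(x,y)\}$'' is fine: with $f=\rho(y,\cdot)$ you correctly obtain $\QQ(f)\le\tfrac12 m(X)=\tfrac12\le1$, so $f$ is admissible and $r(x,y)\ge\rho(x,y)$.

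The gap is in the reverse direction, precisely at the point you flag as ``delicate'' but then declare to work out. For $f$ with $\QQ(f)\le1$ you build $\rho_f(z,w)=|f(z)-f(w)|$, intrinsic for $m_f$ with $m_f(X)=2\QQ(f)$. Renormalizing to total mass $1$ forces you to replace $\rho_f$ by $\rho_f/\sqrt{m_f(X)}=\rho_f/\sqrt{2\QQ(f)}$, so the admissible metric separates $x,y$ by
\[
\frac{|f(x)-f(y)|}{\sqrt{2\QQ(f)}}\,,
\]
which at the extremal $\QQ(f)=1$ equals $|f(x)-f(y)|/\sqrt{2}$, \emph{not} $|f(x)-f(y)|$. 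Thus your two inequalities give only $\sup\le r\le\sqrt{2}\,\sup$, and the constants do \emph{not} cancel. A two-point check makes this concrete: for $X=\{x,y\}$, $b(x,y)=1$, one has $\QQ(f)=(f(x)-f(y))^2$ and hence $r(x,y)=1$, whereas the intrinsic condition with $m(X)=1$ forces $\rho(x,y)\le\sqrt{m(x)\wedge m(y)}\le1/\sqrt{2}$, so $\sup\{\rho(x,y)\}=1/\sqrt{2}$.

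The likely explanation is a convention mismatch between this survey and \cite{GHKLW} (for instance, whether the intrinsic inequality carries a factor $\tfrac12$, or whether the energy is normalised differently). With the survey's conventions the identity actually reads $r(x,y)=\sqrt{2}\,\sup\{\rho(x,y)\}$, and your argument---once you stop asserting that the factors cancel and instead track the $\sqrt{2}$ honestly through both directions (scaling $f$ up by $\sqrt{2}$ in the first direction)---proves exactly that.
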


%% SUBSUBSECTION %%%%%%%%%%%%%%%%%%%%%%%%%%%%%%%%%%%%%%%%%

\subsubsection{Another path metric}\label{s:Metric:CdV}
Colin de Verdi\`ere/Torki-Hamza/Truc \cite{CdVTHT} studied a path
pseudo metric $\de$ which is given as
\begin{align*}
    \de(x,y)=\inf_{x=x_{0}\sim\ldots\sim x_{n}=y}\sum_{i=0}^{n-1}\Big(\frac{m(x)\wedge m(y)}{b(x,y)}\Big)^{\frac{1}{2}}, \qquad x,y\in X.
\end{align*}
As discussed in \cite{HKMW}, this metric is equivalent to the
intrinsic metric $\rho_{0}$ if and only if the combinatorial vertex
degree $\deg$ is bounded on the graph.

%% SUBSECTION %%%%%%%%%%%%%%%%%%%%%%%%%%%%%%%%%%%%%%%%%%%%

\subsection{A Hopf-Rinow theorem}

We  stress that  in general an intrinsic metric $\rho $ (and in
particular $\rho_{0}$) is not a metric and $(X,\rho)$ might not even
be locally compact, as can be seen from examples in \cite[Example
A.5]{HKMW}. However, for locally finite graphs and path metrics such
as $\rho_{0}$ the situation is much tamer. For example one can show
a Hopf-Rinow type theorem which in parts can also be found in
\cite{Mi}. Recall that for a path $\gamma=(x_{n})$ the length with
respect to a metric $\rho$  is given by
$l(\gamma)=\sum_{j\ge0}\rho(x_{j},x_{j+1})$ and, moreover, a path
$\gamma=(x_{n})$ is called a \emph{geodesic} with respect to a
metric $\rho$ if $\rho(x_{0},x_{n})=l((x_{0},\ldots,x_{n}))$ for all
$n$.

\begin{thm}[Theorem~A1 in \cite{HKMW}]\label{t:HopfRinow} Let $(b,c)$ be a locally finite connected graph over $(X,m)$ and $\rho$ be a path metric. Then, the following are equivalent:
\begin{itemize}
  \item [(i)] $(X,\rho)$ is complete as a metric space.
  \item [(ii)] $(X,\rho)$ is geodesically complete, that is any infinite geodesic has infinite length.
  \item [(iii)] The  distance balls in $(X,\rho)$ are pre-compact (that is finite).
\end{itemize}
\end{thm}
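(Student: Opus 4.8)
The plan is to prove the cycle (iii)~$\Rightarrow$~(i)~$\Rightarrow$~(ii)~$\Rightarrow$~(iii), with essentially all the work in the last implication. The conceptual starting point—and the reason local finiteness together with the \emph{path} structure (as opposed to an arbitrary intrinsic metric) is indispensable here—is that $(X,\rho)$ is a \emph{discrete} metric space. Writing $s(x)=\min_{y\sim x}\rho(x,y)$, a minimum over finitely many positive numbers by local finiteness and hence strictly positive, any path from $x$ to a vertex $z\neq x$ has $\rho$-length at least $\rho(x,x_1)\ge s(x)$ for its first step $x_1\sim x$; since $\rho$ is a path metric, $\rho(x,z)$ is the infimum of these lengths, so $\rho(x,z)\ge s(x)>0$. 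Thus $B_{s(x)}(x)=\{x\}$, every point is isolated, and a sequence converges in $(X,\rho)$ precisely when it is eventually constant.

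Granting this, (iii)~$\Rightarrow$~(i) is immediate: a Cauchy sequence is bounded, hence contained in a finite ball, hence takes some value infinitely often, and being Cauchy it is eventually equal to that value. For (i)~$\Rightarrow$~(ii) I argue by contraposition. An infinite geodesic $(x_n)$ of finite length $L=\sum_j\rho(x_j,x_{j+1})$ has pairwise distinct vertices (a repetition would contradict $\rho(x_0,x_n)=\sum_{j<n}\rho(x_j,x_{j+1})$), and $\rho(x_n,x_m)\le\sum_{j=n}^{m-1}\rho(x_j,x_{j+1})\to 0$, so $(x_n)$ is Cauchy but not eventually constant; by discreteness it does not converge, so $(X,\rho)$ is not complete.

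The heart of the matter is (ii)~$\Rightarrow$~(iii), again phrased contrapositively: if some closed ball $B:=\{v:\rho(x_0,v)\le R\}$ is infinite, I construct an infinite geodesic of length at most $R$. The key tool is a one-step distance recursion: since each vertex has finitely many neighbors, splitting a path at its last edge gives $\rho(x_0,v)=\min_{u\sim v}\big(\rho(x_0,u)+\rho(u,v)\big)$, with the minimum attained at some neighbor $p(v)$ satisfying $\rho(x_0,p(v))=\rho(x_0,v)-\rho(p(v),v)<\rho(x_0,v)$. This defines a parent map $p$ on $B\setminus\{x_0\}$ taking values in $B$ and strictly decreasing the distance to $x_0$. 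Consider the backward orbit $v,p(v),p^2(v),\dots$ of some $v\in B$. If one such orbit is infinite it is already a geodesic (each prefix is distance-realizing) of length at most $\rho(x_0,v)\le R$, and we are done. Otherwise every orbit terminates at the unique parentless vertex $x_0$, so $p$ organizes $B$ into a tree rooted at $x_0$ in which each vertex has finitely many children (they lie among its neighbors). This tree is infinite and locally finite, so König's lemma yields an infinite ray $x_0=y_0,y_1,y_2,\dots$ from the root; along it $\rho(x_0,y_n)=\sum_{j<n}\rho(y_j,y_{j+1})$, so it is a geodesic, and since every $y_n\in B$ its length is $\lim_n\rho(x_0,y_n)\le R<\infty$.

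I expect the genuine obstacle to be exactly this last construction. A naive application of König's lemma to a global shortest-path forest produces an infinite geodesic ray but gives no control on its length, so the crucial idea is to run the argument \emph{inside} the bounded set $B$, exploiting that the parent map never leaves $B$, thereby forcing finite length. A secondary subtlety, which the parent/backward-orbit dichotomy is designed to absorb, is that when balls may be infinite one cannot assume a priori that shortest paths between vertices exist; the case in which a backward orbit fails to reach $x_0$ is not a defect but directly supplies the desired finite-length geodesic.
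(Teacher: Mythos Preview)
The paper does not actually prove this theorem; it merely states it and cites \cite{HKMW}. So there is no in-paper argument to compare against. That said, your proof is correct and follows the natural K\"onig-lemma route one would expect in the cited reference: the discreteness of $(X,\rho)$ (from local finiteness plus the path structure) makes (iii)$\Rightarrow$(i)$\Rightarrow$(ii) essentially formal, and your parent-map construction for (ii)$\Rightarrow$(iii) is exactly the right idea, including the nice dichotomy that an infinite backward orbit already furnishes a finite-length geodesic without ever reaching $x_0$.

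Two minor points, neither of which affects the argument. First, since the paper uses \emph{closed} balls $B_r(x)=\{y:\rho(x,y)\le r\}$, your claim $B_{s(x)}(x)=\{x\}$ is off by the minimizing neighbor; write $B_{s(x)/2}(x)=\{x\}$ or use the open ball. Second, in Case~1 you might make explicit the one-line check that the backward orbit $(v_0,v_1,\dots)$ really is a geodesic \emph{from $v_0$}: from $\rho(x_0,v_0)-\rho(x_0,v_n)=\sum_{k<n}\rho(v_k,v_{k+1})$ together with $\rho(v_0,v_n)\ge \rho(x_0,v_0)-\rho(x_0,v_n)$ one gets $\rho(v_0,v_n)=\sum_{k<n}\rho(v_k,v_{k+1})$, which is precisely the geodesic condition in the paper's sense.
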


%% SUBSECTION %%%%%%%%%%%%%%%%%%%%%%%%%%%%%%%%%%%%%%%%%%%%

\subsection{Some important conditions}
In the general situation, we will often make  assumptions which are discussed next.

We say a pseudo metric $\rho$ admits \emph{finite balls} if (iii) in
the theorem above is satisfied for $\rho$:
\begin{itemize}
  \item [(B)] The distance balls $B_{r}(x)=\{y\in X\mid \rho(x,y)\leq r\}$ are finite for all $x\in X$, $r\ge0$.
\end{itemize}

A somewhat weaker assumption is that the  \emph{degree is bounded on
balls}:
\begin{itemize}
  \item [(D)] The restriction of $\Deg$ to  $B_{r}(x)$ is bounded for all $x\in X$, $r\ge0$.
\end{itemize}
Clearly, (B) implies (D). Moreover, (D) is equivalent to the fact
that $\LL$ restricted to the $\ell^{2}$ space of any distance ball
is a bounded operator, confer Theorem~\ref{l:DF:Boundedness}.

The assumptions (B) and (D) can be understood as bounding $\rho$ in a certain sense from below. Next, we come to an assumption which may be understood as an upper bound.

We say a pseudo-metric $\rho$ has \emph{finite jump size} if
\begin{itemize}
  \item [(J)] The jump size $s=\sup\{\rho(x,y)\mid x,y\in X,\,x\sim y\}$ is finite.
\end{itemize}

The assumptions (B) and (J) combined have  strong geometric consequences.

\begin{lemma}Let $(b,c)$ be a graph over $(X,m)$ and $\rho$ be an pseudo metric. If $\rho$ satisfies (B) and (J), then the graph is locally finite.
\end{lemma}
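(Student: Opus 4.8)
The plan is to show directly that every vertex has finite degree by trapping its entire neighborhood inside a single distance ball and then invoking (B). Fix a vertex $x\in X$ and consider its set of neighbors $N(x)=\{y\in X\mid x\sim y\}$, that is those $y$ with $b(x,y)>0$. The point is that (J) forces all of $N(x)$ to lie at uniformly bounded distance from $x$: for every $y\in N(x)$ we have, by the very definition of the jump size,
\begin{align*}
\rho(x,y)\le s=\sup\{\rho(u,v)\mid u,v\in X,\,u\sim v\}<\infty.
\end{align*}
Hence $N(x)\subseteq B_{s}(x)=\{y\in X\mid\rho(x,y)\le s\}$.

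Next I would apply (B). Since $\rho$ satisfies (B), the ball $B_{s}(x)$ is finite for this fixed $x$ and the fixed radius $s$. As $N(x)$ is a subset of a finite set, it is itself finite, so $\deg(x)=\#N(x)<\infty$. Because $x$ was arbitrary, every vertex has finite combinatorial degree, which is precisely the assertion that the graph is locally finite.

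There is essentially no obstacle here beyond correctly combining the two hypotheses: (J) supplies a single uniform upper bound $s$ on the length of every edge, so that the neighbors of any given vertex cannot escape the ball of radius $s$ about it, while (B) guarantees that such a ball contains only finitely many vertices. The only mild subtlety worth flagging is that the neighbors of $x$ are exactly the $y$ with $b(x,y)>0$, and that all of these are captured by the \emph{same} radius $s$, independent of $x$ and of how large $\Deg(x)$ might be; finiteness of the graph at $x$ then follows purely from finiteness of the ball.
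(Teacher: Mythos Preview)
Your proof is correct and takes essentially the same approach as the paper: use (J) to place all neighbors of a vertex inside the ball of radius $s$, then invoke (B) to conclude that ball is finite. The paper phrases it as a one-line contradiction, whereas you write it out directly, but the argument is identical.
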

\begin{proof} If there was a vertex with infinitely many neighbors, then there would be a distance ball containing all of them by finite jump size. However, this is impossible by (B).
\end{proof}
%% SUBSECTION %%%%%%%%%%%%%%%%%%%%%%%%%%%%%%%%%%%%%%%%%%%%
\subsection{Construction of cut-off functions}\label{s:Metric:estimate} From an analyst's point of view a major interest in metrics is to construct cut-off functions with desirable properties. Let us illustrate in how intrinsic metrics serve this purpose.

Given a intrinsic metric $\rho$, a subset $A\subseteq X$  and $R>0$, the most  basic cut-off function is defined by $\eta=\eta_{A,R}:X\to[0,\infty)$
\begin{align*}
    \eta(x)=\Big(1-\frac{\rho(x,A)}{R}\Big)\wedge 0, \qquad x\in X.
\end{align*}

Such  functions are often used to approximate a solution $f$ by $\eta_{B_{r},R}f$, where $B_{r}$ is a ball with respect to an intrinsic metric $\rho$ about some vertex.

Let us illustrate an estimate which often occurs  as a crucial step in the analysis. To this end, let $s\in[0,\infty]$ be the jump size of $\rho$. In the analysis one often arrives at a term such as below which is then further estimated using the intrinsic metric property as illustrated below
\begin{align*}
\lefteqn{\sum_{x\in X}|f(x)|^{2}\sum_{y\in X} b(x,y)(\eta(x)-\eta(y))^{2}}\\
&=\sum_{x\in B_{R+s}\setminus B_{r-s}}|f(x)|^{2}\sum_{y\in B_{R+s}\setminus B_{r-s}} b(x,y)(\eta(x)-\eta(y))^{2}\\
&\leq \frac{1}{(R-r)^{2}}\sum_{x\in B_{R+s}\setminus B_{r-s}} |f(x)|^{2}\sum_{y\in B_{R+s}\setminus B_{r-s}} b(x,y)(\rho(x,B_{r})-\rho(y,B_{r}))^{2}\\
&\leq\frac{1}{(R-r)^{2}}\sum_{x\in B_{R+s}\setminus B_{r-s}} |f(x)|^{2}\sum_{y\in B_{R+s}\setminus B_{r-s}} b(x,y)\rho^{2}(x,y)\\
&\leq\frac{1}{(R-r)^{2}}\sum_{x\in B_{R+s}\setminus B_{r-s}} {|f(x)|^{2}}m(x)\\
&=\frac{1}{(R-r)^{2}}\|f1_{B_{R+s}\setminus B_{r-s}} \|^{2}. \end{align*}
%If for example $f$ was in $\ell^{2}(X,m)$ this is a useful estimate.

For example consider a graph with standard weights and the counting measure. If ones constructs $\eta$ with the combinatorial graph distance instead, this yields the vertex degree function $\deg(x)$ instead of $m(x)=1$ in the fourth step. Hence, in order to do the final step one has to assume additionally that $\deg$ is bounded.

Finally, let us discuss the virtue of the assumptions of finite balls (B) and finite jump size (J) in light of these considerations:

If (B) holds, then $\eta_{B_{r},R}f$ is in $C_{c}(X)$ which is for example useful to apply Green's formula.

Secondly, if (J) holds then $s<\infty$. So, at the end of the estimate above we only have the $\ell^{2}$ norm of $f$ on a ball rather than on the whole space.

%%%%%%%%%%%%%%%%%%%%%%%%%%%%%%%%%%%%%%%%%%%%%%%%%%%%%%%%%%
%% SECTION %%%%%%%%%%%%%%%%%%%%%%%%%%%%%%%%%%%%%%%%%%%%%%%
%%%%%%%%%%%%%%%%%%%%%%%%%%%%%%%%%%%%%%%%%%%%%%%%%%%%%%%%%%

%% SUBSECTION %%%%%%%%%%%%%%%%%%%%%%%%%%%%%%%%%%%%%%%%%%%%

\section{Liouville type theorems}\label{s:Liouville}

The classical Liouville theorem states that if a harmonic function
is bounded from above, then the function is constant. Here, we look
into boundedness assumptions such as $\ell^{p}$ growth bounds.
First, we present Yau's $L^{p}$ Liouville theorem and Karp's
improved bound for manifolds. Secondly, we discuss the case of the
normalized Laplacian for graphs and the results that have been
proven for this operator. Thirdly, we present theorems that recover
Yau's and Karp's results for general weighted graphs using intrinsic
metrics. As a consequence, this yields  a sufficient criterion for
recurrence. Finally, we round off the section by a result which
seems to hold exclusively for discrete spaces.

Throughout this section, keep in mind the fact that absence of non-constant positive subharmonic functions implies the absence of non-constant harmonic functions, Lemma~\ref{l:SolutionsVsSubsolutions}.

%% SUBSUBSECTION %%%%%%%%%%%%%%%%%%%%%%%%%%%%%%%%%%%%%%%%%

\subsection{Yau's and Karp's theorem for manifolds}
We consider a connected Riemannian manifold $M$, together with its Laplace Beltrami operator $\Delta_{M}$. A twice continuously differentiable function $f$ on $M$ is called harmonic (respectively subharmonic) if $\Delta_{M}f=0$ (respectively $\Delta_{M}f\leq0$).

In 1976 Yau \cite{Yau} proved that on a complete Riemannian manifold $M$ any harmonic function or positive subharmonic function in $L^{p}(M)$ is already constant.

This result was later strengthened by Karp in 1982, \cite{Ka}.
Namely, any harmonic function or positive subharmonic function $f$
that  satisfies
\begin{align*}
    \inf_{r_{0}>0}\int_{r_{0}}^{\infty} \frac{r}{{\|f1_{B_{r}\|}}_{p}^{p}}dr=\infty,
\end{align*}
is already constant, where $1_{B_{r}}$ is the characteristic function of the geodesic ball $B_{r}$ about some arbitrary point in the manifold.
Karp's result has Yau's theorem as a direct consequence.

Later in 1994 Sturm generalized Karp's theorem to  strongly local Dirichlet forms, where balls are taken with respect to the intrinsic metric. The underlying assumption on  the metric is that it generates the original topology and all balls are relatively compact.

%% SUBSUBSECTION %%%%%%%%%%%%%%%%%%%%%%%%%%%%%%%%%%%%%%%%%

\subsection{Liouville theorems for normalized graph Laplacians} For graphs $b$ over $(X,m)$ the first results in this direction were obtained for the normalizing measure $m=n$. In this case, the operator $L$ is bounded, see Section~\ref{l:DF:Boundedness}, and the combinatorial graph distance $d$ is an intrinsic metric, see Section~\ref{s:Metric:CGD}. (Of course, the fact that a function is harmonic depends only on the graph $b$ and not on the measure $m$, but being in an $\ell^{p}$ space does.)

Starting 1997 with Holopainen/Soardi \cite{HoS},  Rigoli/Salvatori/Vignati \cite{RSV}, Masamune \cite{M}, eventually  in 2013
Hua/Jost \cite{HJ} showed that  if a harmonic or positive subharmonic function $f$ satisfies
\begin{align*}
    \liminf_{r\to\infty}\frac{1}{r^2}\|f1_{B_{r}(x)}\|_{p}^{p}<\infty,
\end{align*}
for some $p\in (1,\infty)$ and $x\in X$, then $f$ must be constant. Here,  the balls are taken with respect to the natural graph distance. This  directly implies Yau's theorem for $p\in(1,\infty)$. Moreover, Hua/Jost \cite{HJ} also show Yau's theorem for $p=1$.

\subsection{Liouville theorems involving intrinsic metrics}
For graphs $b$ over a general discrete measure space one can  not expect such results to hold without any further conditions: Namely, being harmonic does not depend on the measure, so for any harmonic function $f$ there is a measure $m$ such that $f$ is in $\ell^{p}(X,m)$, $p\in (1,\infty)$.

The following theorem for $p\in(1,\infty)$  is found in \cite[Corollary~1.2]{HuK} with the additional assumption of finite jump size (J). Below we sketch  how to omit (J) based on an idea which was communicated by Huang \cite{Hu2}.
For $p=2$, the  theorem (without the assumption (J)) is found in  \cite{GKS} based on ideas developed in \cite{HuK,HKMW} and \cite{Mi}, see also \cite{Mi2,MT}.

\begin{thm}
[Corollary~1.2 in \cite{HuK} and \cite{Hu2}]\label{t:Yau} Let
$(b,c)$ be a connected graph over $(X,m)$ and $\rho$ be an intrinsic
metric  with bounded degree on balls (D). If $f\in\ell^{p}(X,m)$,
$p\in(1,\infty)$, is a positive subharmonic function then $f$ is
constant.
\end{thm}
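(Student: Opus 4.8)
The plan is to run a Caccioppoli-type energy estimate, testing the subharmonicity of $f$ against the function $\varphi=\eta^{2}f^{p-1}$, where $\eta=\eta_{B_{r},R}$ is the intrinsic-metric cut-off of Section~\ref{s:Metric:estimate} adapted to a ball $B_{r}$ about a fixed vertex. Since $f\ge0$ and $\eta\ge0$ we have $\varphi\ge0$, and since $f$ is subharmonic, $\LL f\le0$, so the right-hand side of Green's formula satisfies $\sum_{x}\LL f(x)\varphi(x)m(x)\le0$. Granting for the moment that Green's formula applies, and discarding the non-negative $c$-term, this yields
\begin{align*}
\frac12\sum_{x,y}b(x,y)(f(x)-f(y))(\varphi(x)-\varphi(y))\le0 .
\end{align*}

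Next I would expand $\varphi(x)-\varphi(y)=\eta^{2}(x)f(x)^{p-1}-\eta^{2}(y)f(y)^{p-1}$ via the symmetric splitting $\eta^{2}(x)u-\eta^{2}(y)v=\tfrac12(\eta^{2}(x)+\eta^{2}(y))(u-v)+\tfrac12(\eta^{2}(x)-\eta^{2}(y))(u+v)$. This separates a non-negative \emph{good} term built from $(f(x)-f(y))(f(x)^{p-1}-f(y)^{p-1})\ge0$ from an \emph{error} term carrying the factor $\eta^{2}(x)-\eta^{2}(y)=(\eta(x)-\eta(y))(\eta(x)+\eta(y))$. Writing $h=f^{p/2}$, two elementary one-variable inequalities (for $a,b\ge0$, $p>1$) do the work: $(a-b)(a^{p-1}-b^{p-1})\ge c_{p}(a^{p/2}-b^{p/2})^{2}$ bounds the good term below, while $|a-b|(a^{p-1}+b^{p-1})\le C_{p}|a^{p/2}-b^{p/2}|(a^{p/2}+b^{p/2})$ controls the error term by $|h(x)-h(y)|(h(x)+h(y))$. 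A Young inequality (using $(\eta(x)+\eta(y))^{2}\le2(\eta^{2}(x)+\eta^{2}(y))$) then absorbs a small multiple of the good term and leaves
\begin{align*}
\sum_{x,y}b(x,y)\,\tfrac12(\eta^{2}(x)+\eta^{2}(y))(h(x)-h(y))^{2}\le\frac{C}{c_{p}}\sum_{x,y}b(x,y)(\eta(x)-\eta(y))^{2}(h(x)+h(y))^{2}.
\end{align*}

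The intrinsic metric now enters exactly as in the cut-off estimate of Section~\ref{s:Metric:estimate}: since $\eta$ is $\tfrac1R$-Lipschitz with respect to $\rho$, one has $\sum_{y}b(x,y)(\eta(x)-\eta(y))^{2}\le R^{-2}\sum_{y}b(x,y)\rho^{2}(x,y)\le R^{-2}m(x)$. Using $(h(x)+h(y))^{2}\le2(h(x)^{2}+h(y)^{2})$ and symmetry, the right-hand side is bounded by $\tfrac{C'}{R^{2}}\sum_{x}h(x)^{2}m(x)=\tfrac{C'}{R^{2}}\|f\|_{p}^{p}$. Crucially this bound is \emph{global}: I never localise to an annulus, so I never need the jump size to be finite. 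Letting $R\to\infty$ sends the right-hand side to $0$ (this is where $f\in\ell^{p}$ is used), while on the left $\tfrac12(\eta^{2}(x)+\eta^{2}(y))\to1$ pointwise, so Fatou's lemma forces $\sum_{x,y}b(x,y)(h(x)-h(y))^{2}=0$. By connectivity $h=f^{p/2}$, and hence $f$, is constant.

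The main obstacle is not this formal computation but making the testing step rigorous under the sole hypothesis (D): since balls need not be finite, $\varphi=\eta^{2}f^{p-1}$ need not lie in $C_{c}(X)$, so Green's formula as stated does not apply directly, and this is precisely where the device for omitting (J) enters. I would handle it by exhaustion: fix finite sets $X_{n}\uparrow X$, apply Green's formula to the genuinely compactly supported $\varphi_{n}=\eta^{2}f^{p-1}1_{X_{n}}$, and let $n\to\infty$. The right-hand side $\sum_{x}\LL f(x)\varphi_{n}(x)m(x)$ is a sum of non-positive terms whose negatives increase to $-\sum_{x}\LL f(x)\varphi(x)m(x)$, so monotone convergence controls it; the delicate point is the convergence $\QQ(f,\varphi_{n})\to\QQ(f,\varphi)$, which I would obtain by establishing absolute summability of the expanded double sum from (D) (bounding $\Deg$ on the support of $\eta$), the $\tfrac1R$-Lipschitz estimate above, and $f\in\ell^{p}$. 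This absolute-convergence bookkeeping, rather than any single inequality, is the technical heart of removing (J).
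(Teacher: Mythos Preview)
Your Caccioppoli scheme---test against $\eta^{2}f^{p-1}$, split symmetrically, absorb via Young, and use the global intrinsic-metric bound $\sum_{y}b(x,y)(\eta(x)-\eta(y))^{2}\le R^{-2}m(x)$---is exactly the approach the paper sketches; the resulting inequality is equivalent (up to the elementary inequality $(a-b)(a^{p-1}-b^{p-1})\asymp(f(x)\vee f(y))^{p-2}(f(x)-f(y))^{2}$) to the paper's displayed Caccioppoli estimate.

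Where you diverge from the paper is in the mechanism for dropping (J), and here there is a gap. You propose to truncate the \emph{test function} to $\varphi_{n}=\eta^{2}f^{p-1}1_{X_{n}}$ and pass to the limit by establishing absolute summability of the bilinear form. But absolute summability of the \emph{good} part $\sum_{x,y}b(x,y)\tfrac12(\eta^{2}(x)+\eta^{2}(y))(f(x)-f(y))(f^{p-1}(x)-f^{p-1}(y))$ is not a consequence of bounding $\Deg$ on $\supp\eta$: the summand does not vanish on edges $(x,y)$ with $x\in\supp\eta$ and $y\notin\supp\eta$, and for such $y$ you have no control on $\Deg(y)$, so terms of the type $\sum_{x\in B}\sum_{y\notin B}b(x,y)f(y)^{p}$ need not be finite. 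The finiteness of the good term is in fact the \emph{conclusion} of the Caccioppoli inequality, so invoking it to justify the passage to the limit is circular.

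The paper's (Huang's) device is different: rather than truncating $\varphi$, one truncates $f$ to $f_{r}=f1_{B_{r}}$ and establishes a \emph{localized} Green's formula $\sum_{x}(\LL f_{r})(x)g(x)m(x)=\tfrac12\sum_{x,y\in B_{r}}b(x,y)(f_{r}(x)-f_{r}(y))(g(x)-g(y))$ for $g\in\ell^{q}$ supported in $B_{r}$, where the double sum is explicitly confined to $B_{r}\times B_{r}$ and the boundary contribution is shown to be controllable under (D). This sidesteps the unbounded-degree issue at neighbors of $B_{r}$. Your exhaustion idea can be made to work, but it requires a more careful treatment of the boundary remainder $R_{n}$ than ``absolute summability''; in particular one needs an argument that does not presuppose finiteness of the localized energy.
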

\begin{proof}[Sketch of the proof] The key element of the proof is a Caccioppoli inequality stating that  there is $C>0$ such that such that for any positive subharmonic function $f$ and $0<r<R$
\begin{align*}
\sum_{x,y\in B_r}b(x,y)(f(x)\vee f(y))^{p-2}
|f(x)-f(y)|^{2}&\leq \frac{C}{(R-r)^{2}}\|f\|_{p}^{p}.
\end{align*}
Letting first $R\to\infty$ and afterwards $r\to\infty$ readily yields constancy of $f$.\\
The assumption (J) was used in \cite[Lemma 3.1]{HuK} only for   Green's formula applied to $f\in \FF$ such that  $f_{r}=f1_{B_{r}}\in \ell^{p}(X,m)$ and $g\in \ell^{q}(X,m)$,  $\supp g\subseteq B_{r}$ with $1/p+1/q=1$. It was pointed out by  Huang \cite{Hu2} that  (D) is sufficient to show
\begin{align*}
    \sum_{x\in X}(\LL f_{r})(x)g(x)m(x) = \frac{1}{2}\sum_{x,y\in B_r}b(x,y)(f_{r}(x)-f_{r}(y)) {(g(x)-g(y))},
\end{align*}
since the term including vertices outside of $B_{r}$ can be seen to be bounded.
\end{proof}

The case $\ell^{1}$ is more subtle in the general situation. In
\cite[Theorem~1.7]{HuK} it was shown that for the stochastic
complete graphs Yau's Liouville theorem remains true for $p=1$ (see
Section~\ref{s:SC} for definition of stochastic completeness).
Otherwise, there are counterexamples, see \cite[Section~4]{HuK}.

Refining the Caccioppoli inequality from the proof of
Theorem~\ref{t:Yau} above,  \cite[Lemma 3.1]{HuK}, and applying it
recursively, we obtain a discrete version of Karp's theorem.
However, for the recursive scheme we  need the jump size to be
finite (J).

\begin{thm}[Theorem~1.1 in \cite{HuK}]Let $b$ be a connected graph over $(X,m)$ and $\rho$ be an intrinsic metric with bounded degree on balls (D) and finite jump size (J). If $f$ is a positive subharmonic function such that for some $p\in (1,\infty)$ and $x\in X$
\begin{align*}
    \inf_{r_{0}>0}\int_{r_{0}}^{\infty} \frac{r}{{\|f1_{B_{r}(x)\|}}_{p}^{p}}dr=\infty,
\end{align*}
then $f$ is constant, where $1_{B}$ is the characteristic function of a set $B\subseteq X$.
\end{thm}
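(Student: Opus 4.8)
The plan is to push the cut‑off argument behind Theorem~\ref{t:Yau} one scale further: instead of the single‑scale, piecewise‑linear cut‑off $\eta_{B_r,R-r}$ (which only produces the \emph{global} mass $\|f\|_p^p$ on the right and hence cannot see a borderline growth rate), I would use a cut‑off whose profile along $\rho$ is \emph{adapted} to the growth function
\begin{align*}
V(r)=\|f1_{B_{r}(x)}\|_{p}^{p},\qquad r\ge 0,
\end{align*}
and let the Karp integral condition dictate its optimal shape. The starting point is the refined Caccioppoli inequality of \cite[Lemma~3.1]{HuK}: testing the subharmonicity $\LL f\le 0$ against $\eta^{2}f^{p-1}$ and applying Green's formula yields a constant $C>0$, depending only on $p$, with
\begin{align*}
\sum_{x,y\in X}b(x,y)(f(x)\vee f(y))^{p-2}|f(x)-f(y)|^{2}\eta(x)\eta(y)\le C\sum_{x,y\in X}b(x,y)(f(x)\vee f(y))^{p}(\eta(x)-\eta(y))^{2}
\end{align*}
for every finitely supported cut‑off $\eta$. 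It is exactly here that finite jump size (J) is used, just as in \cite[Lemma~3.1]{HuK}: it keeps the error terms produced by Green's formula applied to $f1_{B_{r}}$ supported on a bounded annulus $B_{R+s}\setminus B_{r-s}$ so that they can be absorbed, while (D) guarantees that all the sums on balls are finite and that $\LL$ acts boundedly there.

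Next I would build the adapted cut‑off. Fixing a large $N$ and an increasing sequence $r_{0}<r_{1}<\dots<r_{N}$, I would set $\eta=\xi\circ\rho(x,\cdot)$, where $\xi$ equals $1$ on $[0,r_{0}]$, vanishes on $[r_{N},\infty)$, and decreases affinely with slope $-a_{k}$ on each interval $[r_{k},r_{k+1}]$, so that $\sum_{k}a_{k}(r_{k+1}-r_{k})=1$. Using (J) to estimate $(\eta(x)-\eta(y))^{2}\le a_{k}^{2}\rho^{2}(x,y)$ for edges meeting the $k$‑th annulus $A_{k}=B_{r_{k+1}}\setminus B_{r_{k}}$, together with the intrinsic metric inequality $\sum_{y}b(x,y)\rho^{2}(x,y)\le m(x)$, the right‑hand side above is bounded by a constant times $\sum_{k}a_{k}^{2}\bigl(V(r_{k+1})-V(r_{k})\bigr)$. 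Since $\eta\equiv 1$ on $B_{r_{0}}(x)$, the left‑hand side dominates the weighted energy of $f$ on $B_{r_{0}}(x)$. Minimizing $\sum_{k}a_{k}^{2}(V(r_{k+1})-V(r_{k}))$ over admissible slopes (a one‑line Cauchy--Schwarz/Lagrange computation) then gives
\begin{align*}
\sum_{x,y\in B_{r_{0}}}b(x,y)(f(x)\vee f(y))^{p-2}|f(x)-f(y)|^{2}\le C\Big(\sum_{k=0}^{N-1}\frac{(r_{k+1}-r_{k})^{2}}{V(r_{k+1})-V(r_{k})}\Big)^{-1}.
\end{align*}

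Finally, the Karp condition enters through the choice of radii. The natural choice is mass‑doubling, $V(r_{k+1})\approx 2V(r_{k})$, for which $(r_{k+1}-r_{k})^{2}/(V(r_{k+1})-V(r_{k}))$ is comparable to $\int_{r_{k}}^{r_{k+1}}r\,V(r)^{-1}\,dr$; the hypothesis $\inf_{r_{0}}\int_{r_{0}}^{\infty}r\,V(r)^{-1}\,dr=\infty$ then forces the capacity sum to diverge as $N\to\infty$. Hence the displayed bound tends to $0$, so the weighted energy of $f$ vanishes on \emph{every} ball $B_{r_{0}}(x)$; since the graph is connected and $f\ge 0$, this forces $f(x)=f(y)$ whenever $x\sim y$, i.e.\ $f$ is constant (this is the same passage from vanishing energy to constancy used in Lemma~\ref{l:SolutionsVsSubsolutions}).

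The hard part will be the last comparison: showing that divergence of Karp's integral $\int r\,V(r)^{-1}\,dr$ genuinely forces the discrete capacity sum $\sum_{k}(r_{k+1}-r_{k})^{2}/(V(r_{k+1})-V(r_{k}))$ to diverge for a good sequence of radii, uniformly over the merely monotone, possibly very irregular step function $V$ (and handling intervals where $V$ is constant, i.e.\ where $f$ already vanishes). This is precisely the recursive estimate carried out in \cite{HuK}, and it is the reason (J) cannot be dropped here as it was for Theorem~\ref{t:Yau}: without finite jump size a single edge could join vertices in far‑apart annuli, the localized bound $(\eta(x)-\eta(y))^{2}\le a_{k}^{2}\rho^{2}(x,y)$ would no longer confine the cost of the cut‑off to one scale, and the scale‑by‑scale bookkeeping that produces the capacity sum would break down.
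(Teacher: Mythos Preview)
Your outline is sound and reaches the result, but the packaging differs from what the survey (following \cite{HuK}) calls the ``recursive'' scheme. There the single-scale Caccioppoli inequality is \emph{iterated} along a sequence of radii, with (J) making each step close up on a bounded annulus. You instead build one multi-scale cut-off with piecewise affine profile, optimize the slopes by Cauchy--Schwarz, and reduce the question to divergence of the discrete capacity sum $\sum_k (r_{k+1}-r_k)^2/(V(r_{k+1})-V(r_k))$. These are two standard and essentially equivalent routes; your formulation has the advantage that the comparison with $\int r\,V(r)^{-1}\,dr$ becomes elementary once one takes geometric radii $r_k=2^k r_0$: then $\sum_k r_k^2/V(r_{k+1})=\tfrac14\sum_{k\ge1} r_k^2/V(r_k)$, while $\int_{r_0}^\infty r\,V(r)^{-1}\,dr\le \tfrac32\sum_k r_k^2/V(r_k)$, so divergence of the integral forces divergence of the capacity sum. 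The ``hard part'' you flag is therefore not hard with this choice, and mass-doubling is unnecessary.

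Two small corrections. First, you place the initial use of (J) in the Caccioppoli step itself, but the proof sketch of Theorem~\ref{t:Yau} in the paper records that (D) alone already suffices for Green's formula there; the genuinely unavoidable use of (J) is exactly the one you identify afterwards, namely confining each edge to at most two adjacent annuli so that $(\eta(x)-\eta(y))^2$ is governed by a \emph{local} slope $a_k$ rather than a global Lipschitz constant. Second, because edges may reach $s$ past an annulus boundary, your right-hand side should carry an $s$-thickening, i.e.\ $V(r_{k+1}+s)-V(r_k-s)$ in place of $V(r_{k+1})-V(r_k)$; with geometric radii this is absorbed into constants, but it should be tracked. With these adjustments your argument is a correct variant of the paper's, trading the explicit recursion for a one-shot capacity minimization.
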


In particular, the theorem above implies the result of Hua/Jost \cite{HJ} and even implies that a harmonic function $f$ satisfying
\begin{align*}
    \limsup_{r\to\infty}\frac{1}{r^2\log r} \|f1_{B_{r}(x)}\|_{p}^{p}<\infty,
\end{align*}
for some $p\in(1,\infty)$, is constant.

%% SUBSUBSECTION %%%%%%%%%%%%%%%%%%%%%%%%%%%%%%%%%%%%%%%%%

\subsection{Recurrence}
As a direct consequence of Karp's theorem we get a sufficient criterion for recurrence  of a graph.
A connected graph $b$ over $X$ is called \emph{recurrent} if
for all $m$ and some (all) $x,y\in X$,
we have
$$    \int_{0}^{\infty}e^{-tL}1_{\{x\}}(y) dt=\infty$$
which is equivalent to absence of non-constant bounded subharmonic functions. For a collection of various equivalent statements of recurrence, see \cite[Proposition~3.3]{HuK} and references therein.

Similar  analogous results to the criterion below are due
to \cite[Theorem~3.5]{Ka} and \cite[Theorem~3]{Stu} which
generalizes for example \cite[Theorem~2.2]{DK},
\cite[Corollary~B]{RSV},
\cite[Lemma~3.12]{Woe}, \cite[Corollary~1.4]{Gri}, \cite[Theorem~1.2]{MUW}
on graphs.

\begin{thm}[Corollary 1.6 in \cite{HuK}] Let $b$ be a connected graph over $(X,m)$ and $\rho$ be an intrinsic metric with bounded degree on balls (D) and finite jump size (J). If for some  $x\in X$
\begin{align*}
    \int_{1}^{\infty} \frac{r}{m({B_{r}(x)})}dr=\infty,
\end{align*}
then the graph is recurrent.
\end{thm}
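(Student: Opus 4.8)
The plan is to deduce this directly from the preceding (discrete Karp) theorem together with the characterization, recalled just above, of recurrence as the absence of non-constant bounded subharmonic functions. The crucial preliminary observation is that this characterization is a statement about the graph $b$ alone and does not involve $m$: whether a function is subharmonic, i.e. $\LL f\le 0$, is independent of the measure, since $\LL f(x)=m(x)^{-1}\sum_{y\in X}b(x,y)(f(x)-f(y))$ and $m(x)>0$. This is exactly what reconciles the hypothesis, which is a condition on the fixed pair $(X,m)$ and the intrinsic metric $\rho$, with the conclusion of recurrence, which is an $m$-independent property. Hence it suffices to show, for our fixed $(X,m)$ and $\rho$, that every bounded subharmonic function is constant.

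So I would assume $f$ is a bounded subharmonic function and, for contradiction, that $f$ is non-constant. Set $g=f+\sup_{X}|f|$. Because $b$ is a graph (so $c\equiv 0$), the operator $\LL$ annihilates constants, whence $g$ is again subharmonic; moreover $g\ge 0$, and $g\in\FF$ since adding a constant preserves membership in $\FF$ (here one uses $\sum_{y\in X}b(x,y)<\infty$). As $f$ is non-constant, $g$ is non-trivial, so $g$ is a \emph{positive} subharmonic function and $M:=\sup_{X}g>0$.

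Next I would check that $g$ satisfies the integral hypothesis of the Karp theorem, say for $p=2$. Since $g$ is bounded,
\begin{align*}
\|g1_{B_{r}(x)}\|_{2}^{2}=\sum_{y\in B_{r}(x)}|g(y)|^{2}m(y)\le M^{2}\,m(B_{r}(x)),
\end{align*}
for every $r$. Consequently, for every $r_{0}>0$,
\begin{align*}
\int_{r_{0}}^{\infty}\frac{r}{\|g1_{B_{r}(x)}\|_{2}^{2}}\,dr\ge \frac{1}{M^{2}}\int_{r_{0}}^{\infty}\frac{r}{m(B_{r}(x))}\,dr=\infty,
\end{align*}
where the divergence follows from the standing hypothesis $\int_{1}^{\infty}r/m(B_{r}(x))\,dr=\infty$ together with the fact that $m(B_{r}(x))\ge m(\{x\})>0$ makes $\int_{1}^{r_{0}}r/m(B_{r}(x))\,dr$ finite. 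Taking the infimum over $r_{0}>0$ therefore leaves the value $\infty$, so the Karp condition is verified.

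Since $\rho$ is assumed to have bounded degree on balls (D) and finite jump size (J), the Karp theorem applies to the positive subharmonic function $g$ and forces $g$, hence $f$, to be constant, contradicting the assumption. Thus there are no non-constant bounded subharmonic functions, and by the stated equivalence the graph is recurrent. The only genuinely delicate point is the one isolated at the outset, namely separating the $m$-dependence of the volume hypothesis from the $m$-independence of subharmonicity and of recurrence; the rest is the elementary volume bound feeding into the already established Karp inequality, which carries all the analytic weight.
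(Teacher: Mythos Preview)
Your argument is correct and is exactly the route the paper intends: the theorem is stated as a ``direct consequence of Karp's theorem'' together with the equivalence of recurrence with the absence of non-constant bounded subharmonic functions, and you have simply written out this deduction. The reduction to a positive subharmonic function via $g=f+\sup_X|f|$ and the volume bound $\|g1_{B_r(x)}\|_2^2\le M^2 m(B_r(x))$ feeding into the Karp integral are precisely the two steps that make the ``direct consequence'' explicit.
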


%% SUBSUBSECTION %%%%%%%%%%%%%%%%%%%%%%%%%%%%%%%%%%%%%%%%%

\subsection{Graphs with measure bounded from below}

Above we discussed existence of certain solutions in terms of metric properties of the underlying graph. There is a result for graphs of a completely different flavor which seems to have no analogue in the non-discrete setting. The  condition (A) below is about $(X,m)$ only as a measure space and the combinatorial structure of the graph.
\begin{itemize}
  \item [(A)] $\sum_{n=1}^{\infty}m(x_{n})=\infty$ for all infinite paths $(x_{n})$.
\end{itemize}
In particular, (A) is satisfied if
\begin{align*}
    \inf_{x\in X}m(x)>0
\end{align*}
which  for example holds if $m$ is constant as in the case of the counting measure or if $m=\deg$.

This condition yields a result for absence of certain solutions in $\ell^{p}(X,m)$ which in contrast to the metric result above includes the case $p=1$. Since the proof is rather short we include it here.

\begin{thm}[Lemma 3.2 in \cite{KL1}]\label{t:A} Let $(b,c)$ be a graph over $(X,m)$  such that  every infinite path has infinite measure  (A). Then there are no positive subharmonic function in $\ell^{p}(X,m)$, $p\in [1,\infty)$.
\end{thm}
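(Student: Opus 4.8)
The plan is to argue by contradiction: I would assume that $f\in\ell^{p}(X,m)$, $p\in[1,\infty)$, is a positive subharmonic function and then violate the measure assumption (A). The whole strategy rests on extracting a discrete maximum principle from subharmonicity and using it to build an infinite path along which $f$ stays bounded away from $0$.

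First I would record the pointwise form of subharmonicity. Multiplying $\LL f(x)\le 0$ by $m(x)>0$ and discarding the non-negative term $c(x)f(x)$ (using $c\ge 0$ and $f\ge 0$) gives
\begin{align*}
f(x)\sum_{y\in X}b(x,y)\le \sum_{y\in X}b(x,y)f(y),\qquad x\in X.
\end{align*}
From this I claim that every vertex $x$ with $f(x)>0$ has a neighbour $y\sim x$ with $f(y)\ge f(x)$: otherwise $f(y)<f(x)$ for all $y\sim x$, and since $\sum_{y}b(x,y)>0$ by connectivity, the right-hand side above would be strictly below the left-hand side, a contradiction. The same computation shows that if $f$ attains its supremum at some vertex, then $f$ equals that value at every neighbour, so by connectivity $f$ is forced to be constant.

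Next, starting from a vertex $x_{0}$ with $\delta:=f(x_{0})>0$ (which exists because $f$ is non-trivial), I would iterate the neighbour claim to produce a path $x_{0}\sim x_{1}\sim x_{2}\sim\cdots$ with $f(x_{n+1})\ge f(x_{n})\ge\delta$, so that $f\ge\delta$ along the entire path. Being infinite, the path has $\sum_{n}m(x_{n})=\infty$ by (A). On the other hand, summing $f^{p}m\ge\delta^{p}m$ over the vertices of the path and comparing with the full norm yields $\delta^{p}\sum_{n}m(x_{n})\le\|f\|_{p}^{p}<\infty$, the desired contradiction.

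The step I expect to be delicate is ensuring that the construction produces a genuine infinite path rather than a walk retracing finitely many vertices, because only then is the comparison $\delta^{p}\sum_{n}m(x_{n})\le\|f\|_{p}^{p}$ (a sum over \emph{distinct} vertices) legitimate and (A) applicable. Here the maximum-principle observation does the work: if the walk ever returned to an earlier vertex, then $f$ would be constant around the intervening loop, and propagating this equality by connectivity would force $f$ to be globally constant, which is incompatible with $f\in\ell^{p}(X,m)$ once (A) is in force. It is convenient to organise this by passing to the super-level set $\{f\ge\delta\}$, which by the neighbour claim has the feature that each of its vertices has a neighbour inside it, while $\ell^{p}$-summability gives $m(\{f\ge\delta\})\le\delta^{-p}\|f\|_{p}^{p}<\infty$; choosing the continuation greedily toward unvisited vertices then converts the monotone walk into an honest infinite path and closes the argument.
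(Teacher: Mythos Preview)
Your overall strategy coincides with the paper's: extract a mean-value inequality from subharmonicity, build an infinite path along which $f$ stays bounded below by a positive constant, and invoke~(A). The difficulty you anticipated---making the path simple---is precisely where your argument has a genuine gap, and it is also where the paper's proof differs from yours.

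Your neighbour claim is the non-strict version: every $x$ with $f(x)>0$ has some $y\sim x$ with $f(y)\ge f(x)$. The resulting monotone walk need not have distinct vertices, and neither of your patches closes this. The first patch asserts that a loop forces $f$ to be globally constant, but that is not true: $f$ is constant \emph{on the loop}, yet a loop vertex may have an off-loop neighbour with strictly larger $f$-value (subharmonicity only bounds the neighbour average from below), so nothing propagates outward. The second patch---greedy continuation inside $\{f\ge\delta\}$---may simply get stuck: every vertex of $\{f\ge\delta\}$ has \emph{a} neighbour in the set, but possibly only already-visited ones, and finite measure of the set does not by itself manufacture an infinite simple path.

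The paper sidesteps all of this by sharpening the neighbour claim to a strict inequality. If $f$ is non-constant, connectedness yields adjacent $x'\sim x$ with $f(x')<f(x)$; then the averaged inequality at $x$ forces some $y\sim x$ with $f(y)>f(x)$. Now $y$ has the neighbour $x$ with $f(x)<f(y)$, so the same step applies at $y$, and inductively one obtains $x_{0},x_{1},\ldots$ with $f(x_{0})<f(x_{1})<\cdots$. Strict monotonicity of the values makes the vertices automatically distinct, so (A) gives $\sum_n m(x_n)=\infty$ and hence $\|f\|_p^p\ge f(x_0)^p\sum_n m(x_n)=\infty$. The constant case is then disposed of separately. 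The missing idea in your write-up is simply to \emph{start} at a vertex that already has a strictly smaller neighbour; this upgrades your $\ge$ to $>$ and dissolves the distinctness problem without any greedy bookkeeping.
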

\begin{proof} Let $f$ be non-negative and subharmonic.  Then, $\LL f(x)\leq0$ evaluated at some $x$ gives, using $f\ge0$,
\begin{align*}
 f(x)\leq\frac{1}{\sum_{y\in X}b(x,y)}\sum_{y\in X} b(x,y)f(y)
\end{align*}
Thus, whenever there is $x'\sim x$ with $f(x')<f(x)$ there must be $y\sim x$ such that $f(x)< f(y)$. Such $x'$ and $x$ exist whenever $f$ is non-constant. Letting $x_0=x$, $x_{1}=y$ and proceeding inductively there is a sequence $(x_{n})$ of vertices such that $0<f(x)<f(x_{n})< f(x_{n+1})$, $n\ge0$.
Now, (A) implies that $f$ is not in $\ell^{p}(X,m)$. On the other hand, if $f$ is constant, then (A) again  implies $f\equiv 0$.
\end{proof}

It is an open problem to unify Theorem~\ref{t:Yau} and Theorem~\ref{t:A}.

%% SUBSECTION %%%%%%%%%%%%%%%%%%%%%%%%%%%%%%%%%%%%%%%%%%%%

\section{Domain of the generators and essential selfadjointness}\label{s:ESA}

In this section we address the question of identifying the domain of the generators $L_{p}$. Classically, the special case $p=2$ received  particular attention. Going back to investigations of Friedrichs and von Neumann a classical question is whether a symmetric operator on a Hilbert space has a unique selfadjoint extension. This property is often studied under the name essential selfadjointness.

We say a symmetric operator on a dense subspace of a Hilbert space is \emph{essentially selfadjoint} if it has a unique selfadjoint extension. Moreover, we say the operator has a \emph{unique Markovian extension} if there is a unique selfadjoint extension such that the corresponding semigroup is Markovian. Clearly, essential selfadjointness implies uniqueness of Markovian extensions.

A sufficient criterion for essential selfadjointness for positive symmetric operators on $\ell^{2}$ is the absence of solutions for $\lm<0$ in $\ell^{2}$ to the equation $\mathcal{L}u=\lm u$, see \cite[Proof of Theorem~6]{KL1}.

The connection of essential selfadjointness to metric completeness for the Laplace Beltrami operator on Riemannian manifolds is that if there exists a boundary one might have to choose suitable 'boundary conditions' in order to obtain a selfadjoint operator.

We first discuss the manifold case which is often referred to Gaffney's theorem. Secondly, we consider weighted graph and recover Gaffney's theorem by the virtue of intrinsic metrics. Furthermore, we determine the domain of the generators on $\ell^{p}$.

%% SUBSUBSECTION %%%%%%%%%%%%%%%%%%%%%%%%%%%%%%%%%%%%%%%%%

\subsection{Gaffney's theorem for manifolds}

We first discuss the question for the Laplace Beltrami operator on a Riemannian manifold.

A  result going back to the work of Gaffney \cite{Ga0,Ga} essentially states that
on a geodesically complete manifold the so called Gaffney Laplacian is essentially selfadjoint which is equivalent to the uniqueness of Markovian extensions of the minimal Laplacian. Independently, essential selfadjointness of the Laplace Beltrami operator the compactly supported infinitely often differentiable functions was shown by Roelcke, \cite{R}. For later results in this direction see also \cite{Che, Str}.

%% SUBSUBSECTION %%%%%%%%%%%%%%%%%%%%%%%%%%%%%%%%%%%%%%%%%

\subsection{Domain of the generators for graphs and intrinsic metrics}
The first results connecting metric completeness and uniqueness of selfadjoint extensions were obtained by Colin de Verdi\`{e}re/Torki-Hamza/Truc \cite{CdVTHT} and Milatovic \cite{Mi,Mi2}. These results were proven for (magnetic) Schr\"odinger operators on graphs with  bounded combinatorial vertex degree and the metric $\de$ discussed in  Section~\ref{s:Metric:CdV}. As discussed there, $\de$ is equivalent to an intrinsic metric if and only if the combinatorial vertex degree is bounded. Here, we consider intrinsic metrics to deal with the general case of unbounded vertex degree.

In Lemma~\ref{l:DF:C_c} we demonstrated that for a graph  we may not
have $\LL C_{c}(X)\subseteq\ell^{2}(X,m)$. Hence, in general $\LL$
is not a symmetric operator on the subspace $C_{c}(X)$ of
$\ell^{2}(X,m)$. Nevertheless, we can still identify the domain of
the generator of the form. The following result is found in
\cite{HKMW} for graph Laplacians and in \cite{GKS} for magnetic
Schr\"odinger operators. The proof follows essentially from
Theorem~\ref{t:Yau} above and standard arguments found in
\cite[Proof of Theorem 5 and 6]{KL1}. One may find a version of the
theorem below in \cite[Corollary~1.4]{HuK}.

\begin{thm}Let $(b,c)$ be a graph over $(X,m)$ and $\rho$ be an intrinsic metric  with bounded degree on balls (D). Then,
\begin{align*}
    D(L_{p})=\{f\in \ell^{p}(X,m)\mid \LL f\in \ell^{p}(X,m)\},\quad\mbox{for all $p\in(1,\infty)$.}
\end{align*}
If additionally $\LL C_{c}(X)\subseteq\ell^{2}(X,m)$, then $\LL\vert_{C_{c}(X)}$ is essentially selfadjoint on $\ell^{2}(X,m)$.
\end{thm}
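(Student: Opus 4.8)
The plan is to identify the generator $L_{p}$ with the \emph{maximal} operator $\ow{L}_{p}$ defined by $D(\ow{L}_{p})=\{f\in\ell^{p}(X,m)\mid \LL f\in\ell^{p}(X,m)\}$ and $\ow{L}_{p}f=\LL f$. One inclusion comes for free: since $L_{p}$ is a restriction of $\LL$ (Theorem~5 in \cite{KL1}, recalled above), we have $D(L_{p})\subseteq D(\ow{L}_{p})$ and $L_{p}\subseteq\ow{L}_{p}$. Hence the entire content of the first assertion is the reverse inclusion $D(\ow{L}_{p})\subseteq D(L_{p})$, which I would reduce to a statement about solutions and then settle by an abstract resolvent argument.

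The crucial input is the following consequence of what is already established: under (D), for every $p\in(1,\infty)$ there is no non-zero $f\in\ell^{p}(X,m)$ with $\LL f=\lm f$ for $\lm<0$. Indeed, Theorem~\ref{t:Yau} shows that every positive subharmonic function in the Riesz space $\ell^{p}(X,m)$ is constant, so in particular there are no non-constant positive subharmonic functions there. Lemma~\ref{l:SolutionsVsSubsolutions}, applied with $\FF_{0}=\ell^{p}(X,m)$, then yields that any solution for $\lm<0$ in $\ell^{p}(X,m)$ is zero; here one uses that a constant solution for $\lm<0$ must vanish in any case.

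For the domain identification I would argue as follows. Since $-L_{p}$ generates the strongly continuous sub-Markovian contraction semigroup $e^{-tL}$ on $\ell^{p}(X,m)$, the map $(L_{p}+1)\colon D(L_{p})\to\ell^{p}(X,m)$ is a bijection. On the other hand, $(\ow{L}_{p}+1)$ is injective: if $f\in D(\ow{L}_{p})$ satisfies $(\LL+1)f=0$, then $\LL f=-f$ exhibits $f$ as a solution for $\lm=-1<0$, so $f=0$ by the crucial input. Now, given $f\in D(\ow{L}_{p})$, put $g=(\LL+1)f\in\ell^{p}(X,m)$ and choose $h\in D(L_{p})$ with $(L_{p}+1)h=g$ by surjectivity; since $L_{p}\subseteq\ow{L}_{p}$ we get $(\ow{L}_{p}+1)(f-h)=0$, whence $f=h\in D(L_{p})$ by injectivity. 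This proves $D(\ow{L}_{p})\subseteq D(L_{p})$ and hence equality.

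For the essential selfadjointness under the extra hypothesis $\LL C_{c}(X)\subseteq\ell^{2}(X,m)$, this assumption guarantees via Green's formula that $\LL\vert_{C_{c}(X)}$ is a symmetric positive operator on $\ell^{2}(X,m)$. I would then invoke the standard criterion recalled in the introduction to Section~\ref{s:ESA} (see the proof of Theorem~6 in \cite{KL1}): such an operator is essentially selfadjoint provided $\LL u=\lm u$ has no non-zero solution $u\in\ell^{2}(X,m)$ for $\lm<0$, which is precisely the $p=2$ case of the crucial input. The main obstacle is thus concentrated entirely in the Liouville-type Theorem~\ref{t:Yau}; once absence of negative-$\lm$ solutions in $\ell^{p}$ is in hand, the remainder is soft functional analysis (surjectivity versus injectivity of $L_{p}+1$, and the deficiency-index criterion), and the only point requiring care is the correct passage from subharmonic functions to genuine solutions through Lemma~\ref{l:SolutionsVsSubsolutions}.
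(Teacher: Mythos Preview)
Your proposal is correct and follows essentially the same approach the paper indicates: reduce the domain identification to the absence of non-zero $\ell^{p}$-solutions for $\lm<0$ via Theorem~\ref{t:Yau} and Lemma~\ref{l:SolutionsVsSubsolutions}, and then invoke the standard resolvent/deficiency arguments from \cite[Proof of Theorems~5 and~6]{KL1}. The only cosmetic point is that in applying Lemma~\ref{l:SolutionsVsSubsolutions} one should take $\FF_{0}=\ell^{p}(X,m)\cap\FF$ rather than $\ell^{p}(X,m)$, since the lemma requires $\FF_{0}\subseteq\FF$; this changes nothing in your argument because every function to which $\LL$ is applied already lies in $\FF$.
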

%\begin{proof}[Idea of the proof for essential selfadjointness]  For two extensions $L'$ and $L''$ one sees by Green's formula that both are restrictions of $\LL$. Then $u=(L'-\lm)^{-1}f-(L''-\lm)^{-1}f$ is a solution to $(\LL-\lm)u=0$ for any $f\in \ell^{2}(X,m)$. By Theorem~\ref{t:Yau} and Lemma~\ref{l:SolutionsVsSubsolutions} we find $u\equiv0$. Hence, $(L'-\lm)^{-1}=(L''-\lm)^{-1}$ and, therefore, $L'=L''$. \end{proof}

Combining this with the Hopf-Rinow type theorem, Theorem~\ref{t:HopfRinow}, we obtain an analogue to a classical result in Riemannian geometry which is often referred to as Gaffney's theorem.

\begin{corollary}[Theorem~2 in \cite{HKMW}] Let $b$ be a locally finite graph over $(X,m)$ and $\rho$ be an intrinsic path metric. If $(X,\rho)$ is metrically complete, then  $\LL\vert_{C_{c}(X)}$ is essentially selfadjoint on $\ell^{2}(X,m)$.
\end{corollary}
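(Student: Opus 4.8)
The plan is to assemble the corollary from results already established above, since it is essentially a reformulation of the preceding theorem under the hypothesis of metric completeness. The key is to trace how completeness propagates through the chain of conditions (i) $\Rightarrow$ (B) $\Rightarrow$ (D), after which the previous theorem applies verbatim.

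First I would invoke the Hopf-Rinow type theorem, Theorem~\ref{t:HopfRinow}. Its hypotheses are met here: $b$ is locally finite, connected by our standing assumption, and $\rho$ is a path metric. Metric completeness of $(X,\rho)$ is exactly condition (i) of that theorem, so we may pass to its equivalent condition (iii), namely that all distance balls in $(X,\rho)$ are finite. This is precisely the finite balls condition (B).

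Next, as observed directly after the definitions of the metric conditions, (B) implies (D): on a finite ball the restriction of $\Deg$ is a function on a finite set, hence bounded. Thus $\rho$ is an intrinsic metric with bounded degree on balls. Moreover, since the graph is locally finite, Lemma~\ref{l:DF:C_c} supplies the remaining hypothesis $\LL C_c(X)\subseteq\ell^2(X,m)$.

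With these facts in hand I would simply apply the preceding theorem in the case $p=2$: $\rho$ is intrinsic, (D) holds, and $\LL C_c(X)\subseteq\ell^2(X,m)$, so that theorem asserts that $\LL\vert_{C_c(X)}$ is essentially selfadjoint on $\ell^2(X,m)$, which is the claim. The only step that actually uses completeness is the first one, where Hopf-Rinow converts the metric assumption into the finiteness condition (B); the genuine analytic content --- the Caccioppoli estimate behind the domain identification and the deduction of essential selfadjointness from the absence of $\ell^2$-solutions for $\lm<0$ --- has already been carried out inside the theorem being quoted, so there is no real obstacle beyond correctly matching hypotheses.
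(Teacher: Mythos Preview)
Your proposal is correct and matches the paper's intended argument exactly: the paper introduces the corollary with the sentence ``Combining this with the Hopf-Rinow type theorem, Theorem~\ref{t:HopfRinow}, we obtain an analogue to a classical result,'' and your chain (completeness $\Rightarrow$ (B) via Hopf--Rinow, (B) $\Rightarrow$ (D), local finiteness $\Rightarrow \LL C_c(X)\subseteq\ell^2(X,m)$, then apply the preceding theorem) is precisely that combination spelled out.
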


%% SUBSUBSECTION %%%%%%%%%%%%%%%%%%%%%%%%%%%%%%%%%%%%%%%%%

\subsection{Graphs with measure bounded from below}

Looking at the graph as a measure space we obtain the following
result as a consequence of Theorem~\ref{t:A}.

\begin{thm}[Theorem~5 in \cite{KL1}]  Let $(b,c)$ be a graph over $(X,m)$  such that  every infinite path has infinite measure  (A). Then,
\begin{align*}
    D(L_{p})=\{f\in \ell^{p}(X,m)\mid \LL f\in \ell^{p}(X,m)\}\quad\mbox{for all $p\in[1,\infty)$.}
\end{align*}
If additionally $\LL C_{c}(X)\subseteq\ell^{2}(X,m)$, then $\LL\vert_{C_{c}(X)}$ is essentially selfadjoint on $\ell^{2}(X,m)$.
\end{thm}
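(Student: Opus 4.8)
The plan is to obtain both assertions from Theorem~\ref{t:A}. Under hypothesis (A) that theorem excludes \emph{positive} (i.e. nontrivial, nonnegative) subharmonic functions from every $\ell^{p}(X,m)$, $p\in[1,\infty)$, and its proof already records that no nonzero constant lies in such a space. Feeding this into Lemma~\ref{l:SolutionsVsSubsolutions}, applied to the Riesz space $\FF_{0}=\ell^{p}(X,m)\cap\FF$, yields the single fact on which everything rests: for every $\lm<0$ the equation $\LL u=\lm u$ has no nonzero solution in $\ell^{p}(X,m)$. Here one first checks that $\FF_{0}$ is indeed a Riesz space, which is routine because $|f\vee g|,|f\wedge g|\le|f|+|g|$ keeps maxima and minima inside $\FF$; and one notes that a solution is by definition an element of $\FF$, so it automatically lies in $\FF_{0}$ once it lies in $\ell^{p}(X,m)$.

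For the domain identification set $D_{\max}=\{f\in\ell^{p}(X,m)\mid\LL f\in\ell^{p}(X,m)\}$. The inclusion $D(L_{p})\subseteq D_{\max}$ is immediate, since $L_{p}$ is a restriction of $\LL$ and hence $\LL f=L_{p}f\in\ell^{p}(X,m)$ whenever $f\in D(L_{p})$. For the reverse inclusion I would exploit that $L_{p}$ generates a strongly continuous contraction semigroup on $\ell^{p}(X,m)$ for every $p\in[1,\infty)$, so that $(0,\infty)$ lies in its resolvent set and $L_{p}+\lm$ maps $D(L_{p})$ bijectively onto $\ell^{p}(X,m)$ for each $\lm>0$. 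Given $f\in D_{\max}$, put $g=(\LL+\lm)f\in\ell^{p}(X,m)$ and choose $h\in D(L_{p})$ with $(L_{p}+\lm)h=g$; since $L_{p}h=\LL h$, the difference $u=f-h$ lies in $\ell^{p}(X,m)$ and satisfies $\LL u=-\lm u$. By the fact isolated above $u=0$, so $f=h\in D(L_{p})$, proving $D_{\max}\subseteq D(L_{p})$.

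For essential selfadjointness the extra hypothesis $\LL C_{c}(X)\subseteq\ell^{2}(X,m)$ is exactly what makes $\LL\vert_{C_{c}(X)}$ a densely defined symmetric and positive operator on $\ell^{2}(X,m)$, its symmetry and positivity being read off Green's formula as $\langle\LL\ph,\psi\rangle=Q(\ph,\psi)=\langle\ph,\LL\psi\rangle$ and $\langle\LL\ph,\ph\rangle=Q(\ph)\ge0$ for $\ph,\psi\in C_{c}(X)$. I would then invoke the sufficient criterion recalled in this section, according to which such an operator is essentially selfadjoint provided $\LL u=\lm u$ has no nonzero solution in $\ell^{2}(X,m)$ for $\lm<0$; this is precisely the isolated fact specialized to $p=2$.

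The main obstacle is the reverse inclusion $D_{\max}\subseteq D(L_{p})$, and within it the nonexistence of negative-parameter $\ell^{p}$ solutions. The resolvent surjectivity of $L_{p}+\lm$ is standard semigroup theory and holds uniformly in $p\in[1,\infty)$, so the delicate content is entirely carried by Theorem~\ref{t:A} and Lemma~\ref{l:SolutionsVsSubsolutions}; the only points requiring care are the verification that $\FF_{0}$ is a Riesz space and that every function produced by the resolvent construction genuinely lies in $\FF$, both of which are elementary.
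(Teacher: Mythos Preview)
Your proposal is correct and follows exactly the approach the paper indicates: the theorem is stated as a consequence of Theorem~\ref{t:A}, and the standard arguments you spell out (passing via Lemma~\ref{l:SolutionsVsSubsolutions} from absence of positive subharmonic functions to absence of $\ell^{p}$-solutions for $\lm<0$, then using resolvent surjectivity of $L_{p}+\lm$ for the domain identification, and the criterion recalled at the start of the section for essential selfadjointness) are precisely the ones pointed to in \cite[Proof of Theorem~5 and~6]{KL1}. There is nothing to add.
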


%% SECTION %%%%%%%%%%%%%%%%%%%%%%%%%%%%%%%%%%%%%%%%%%%%

\section{Stochastic completeness}\label{s:SC}
In the sections above  we excluded the case of solutions in
$\ell^{\infty}$. Indeed, this case is more subtle and the topic of
this section. Specifically, we study  stochastic completeness, a
property which is  also referred to as conservativeness, honesty or
non-explosion.

There is a huge body of literature from various mathematical fields and for references we restrict ourselves a very small selection. For stochastic completeness in the context of discrete Markov processes there is work by Feller \cite{Fel, Fel2} and \cite{Reu} in  the late 50's, for manifolds there is work going back to Azencott \cite{Az} and Grigor'yan \cite{Gri86,Gri99}. For positive contraction semigroups we mention Arlotti/Banasiak \cite{AB} and Mokhtar-Kharroubi/Voigt \cite{M-KV} and for strongly local Dirichlet forms there is for example work by Sturm \cite{Stu}.

Below we discuss a volume growth bound for manifolds that implies stochastic completeness due to Grigor'yan  \cite{Gri86,Gri99}. Afterwards, we discuss stochastic completeness for graphs and show how Grigor'yan's bound fails when one considers the combinatorial graph metric. Finally, we present results that recover the result for manifolds for weighted graphs with intrinsic metrics due to Folz \cite{Fol} and Huang \cite{Hu1}.

%% SUBSUBSECTION %%%%%%%%%%%%%%%%%%%%%%%%%%%%%%%%%%%%%%%%%

\subsection{Grigor'yan's theorem for manifolds}
For a Riemannian manifold $M$ stochastic completeness is defined by the property that the diffusion semigroup of the Laplace-Beltrami operator leaves the constant function $1$ invariant. This can be seen to be equivalent to uniqueness of bounded solutions for the heat equation, \cite[Theorem~6.2]{Gri99}. In 1986 Grigor'yan \cite{Gri86} proved a theorem which gives a sufficient condition for stochastic completeness in terms of volume growth for connected Riemannian manifolds. In particular, this theorem states that if
\begin{align*}
    \inf_{r_{0}>0}\int_{r_{0}}^{\infty} \frac{r}{\log(\mathrm{vol}(B_{r}(x)))}dr=\infty,
\end{align*}
for some $x\in M$,
then the manifold is stochastically complete. In particular, this implies stochastic completeness for manifolds whose volume growth is less than $e^{r^{2}}$. Grigor'yan's theorem was later generalized to strongly local regular Dirichlet forms by Sturm \cite{Stu}.

%% SUBSUBSECTION %%%%%%%%%%%%%%%%%%%%%%%%%%%%%%%%%%%%%%%%%

\subsection{Characterization of stochastic completeness for graphs}
Next, we discuss the notion of stochastic completeness for graphs in more detail. We start with a characterization linking stochastic completeness to solutions in $\ell^{\infty}$.

Variants of the next proposition can be found in \cite{Fel,Fel2,Reu} in the context of discrete Markov processes, in \cite{Woj1} for graphs with standard weights and in \cite[Theorem~1]{KL1} for weighted graphs (which is given there even for non vanishing killing term $c$).

\begin{proposition} Let $b$ be a connected graph over $(X,m)$. The following are equivalent:
\begin{itemize}
  \item [(i)] $e^{-tL}1=1$ for some (all) $t>0$.
  \item [(ii)] For any (some) $\lm<0$,  there is no non-trivial $u\in \ell^{\infty}(X) $ such that
      \begin{align*}
        \LL u=\lm u.
      \end{align*}
 \item [(iii)] For any (some) $f\in \ell^{\infty}(X)$ there is a unique  solution $ [0,\infty)\to\ell^{\infty}(X)$, $t\mapsto u_{t}$  to
      \begin{align*}
        -\LL u=\partial_{t}u,\quad u_{0}=f.
      \end{align*}
\end{itemize}
\end{proposition}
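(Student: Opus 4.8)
The plan is to route everything through the resolvent $G_\alpha=(L+\alpha)^{-1}$, with $\alpha=-\lm>0$, together with its Laplace representation
\begin{align*}
G_\alpha = \int_0^\infty e^{-\alpha t}e^{-tL}\,dt,
\end{align*}
which holds on $\ell^{2}(X,m)$ and extends consistently to $\ell^{\infty}(X)$. Since $c\equiv0$ the constant function is harmonic, $\LL 1=0$. I introduce the two deficiency functions $w_t = 1-e^{-tL}1$ and $h_\alpha = 1-\alpha G_\alpha 1$, both lying in $[0,1]$ because $e^{-tL}$ and $\alpha G_\alpha$ are sub-Markovian. The Laplace identity furnishes the bridge
\begin{align*}
h_\alpha = \alpha\int_0^\infty e^{-\alpha t}w_t\,dt,
\end{align*}
so that $h_\alpha$ vanishes (for one, equivalently every, $\alpha$) exactly when $w_t$ vanishes (for one, equivalently every, $t$). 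The equivalence of the quantifiers in (i) then follows from the monotonicity of $t\mapsto e^{-tL}1$ and the semigroup property, and the one in (ii) directly from this formula.

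For (i)$\Leftrightarrow$(ii) I would first check that $h_\alpha$ is a bounded nonnegative solution of the eigenvalue equation: from $(L+\alpha)G_\alpha 1=1$ and Theorem~5 in \cite{KL1}, which identifies the generator with $\LL$, one gets $\LL G_\alpha 1 = 1-\alpha G_\alpha 1 = h_\alpha$, whence $\LL h_\alpha = -\alpha\,\LL G_\alpha 1 = -\alpha h_\alpha = \lm h_\alpha$. Thus if the graph is not stochastically complete, then $h_\alpha\neq0$ is a nontrivial bounded solution and (ii) fails. For the converse I would establish that $h_\alpha$ is the \emph{maximal} element among all $v\in\ell^{\infty}(X)$ with $0\le v\le1$ and $(\LL+\alpha)v\le0$; granting this, stochastic completeness ($h_\alpha=0$) forces every such $v$ to vanish, and applying it to the (suitably normalised) positive and negative parts of an arbitrary bounded solution $u$ of $\LL u=\lm u$ yields $u=0$ (cf. the positive/negative-part argument of Lemma~\ref{l:SolutionsVsSubsolutions}).

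For (i)$\Leftrightarrow$(iii) the easy direction is the contrapositive: if the graph is not stochastically complete, then $w_t$ solves the heat equation $-\LL w=\partial_t w$ with $w_0=0$ and $0\le w_t\le1$, so $w$ and the zero solution are two distinct bounded solutions sharing the initial datum $0$, violating uniqueness. Conversely, assuming (i), I would show that a bounded solution of the heat equation is determined by its initial value; subtracting two such solutions reduces this to showing that a bounded solution with vanishing initial datum is identically zero, which I would obtain by comparison with $1-e^{-tL}1\equiv0$ via a maximum principle.

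The main obstacle is that $1$, $h_\alpha$ and $w_t$ need not lie in $\ell^{2}(X,m)$, so Green's formula and the self-adjoint functional calculus are not directly available on $\ell^{\infty}(X)$, and neither the maximality of $h_\alpha$ nor the maximum principle used for (iii) can be read off from the form alone. I expect to resolve this by exhausting $X$ with finite subsets, solving the finite-dimensional Dirichlet problems for the truncations of $L$, and passing to monotone limits: the truncated resolvents increase to $G_\alpha$, the comparison principle holds trivially on finite sets, and the required maximality and uniqueness statements then follow by monotone convergence.
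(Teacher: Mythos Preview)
The paper does not supply a proof of this proposition; it is stated with references to \cite{Fel,Fel2,Reu}, \cite{Woj1}, and in the weighted setting to \cite[Theorem~1]{KL1}. Your outline is precisely the standard argument carried out in those references, in particular in \cite{KL1}: one introduces the deficiency functions $1-e^{-tL}1$ and $1-\alpha(L+\alpha)^{-1}1$, relates them via the Laplace transform, checks that the resolvent deficiency is a bounded nonnegative solution of $(\LL+\alpha)v=0$, and then establishes its maximality among bounded subsolutions (and the parallel uniqueness statement for the heat equation) by exhausting $X$ through finite subsets, solving the truncated Dirichlet problems, and passing to the monotone limit. Your identification of the main obstacle---that the relevant objects live in $\ell^{\infty}(X)$ rather than $\ell^{2}(X,m)$---and your proposed resolution via finite exhaustion are exactly what is done there, so the proposal is correct and coincides with the proof the survey defers to.
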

We call a graph $b$ over $(X,m)$ \emph{stochastically complete} if one of the equivalences of the proposition above is satisfied.

There is a physical interpretation to (i) of the above theorem. This
concerns the question whether heat leaves the graph in finite time.
Assume the graph is not stochastically complete, i.e., $e^{-tL}1< 1$
for some $t>0$ (recall that we always have $e^{-tL}1\le 1$
  since $e^{-tL}$ is Markovian). Let $0\leq f\in \ell^{1}(X,m)$ model the
distribution of heat in the graph at time $t=0$. Then, the
distribution of heat at time $t>0$ is given by $e^{-tL}f$ and the
amount of heat in the graph at time $t>0$ is given by
\begin{align*}
    \sum_{x\in X}e^{-t L} f(x)m(x)=\langle e^{-tL}f,1\rangle =\langle f,e^{-tL}1\rangle < \langle f,1\rangle = \sum_{x\in X} f(x)m(x),
\end{align*}
where the right hand side is the amount of heat in the graph at time $t=0$. Hence, the amount of heat in the graph decreases in time, if the graph is not stochastically complete.

%%% SUBSECTION %%%%%%%%%%%%%%%%%%%%%%%%%%%%%%%%%%%%%%%%%%%
\subsection{The combinatorial graph distance and polynomial growth}\label{s:SC_graphs}

Consider the combinatorial graph distance on a graph with standard weights with the counting measure. Wojciechowski discovered in his PhD Thesis \cite{Woj1} that spherically symmetric trees, whose branching numbers grow more than linearly,  are not stochastically complete. This counts for a volume growth of $r!\sim e^{r\log r}$ with respect to the combinatorial graph distance. Later, Wojciechowski \cite[Example 4.11]{Woj3}  gave even examples of stochastically complete graphs with polynomial volume growth. These considerations were generalized for weighted weakly spherically symmetric graphs in \cite{KLW}.

The examples Wojciechowski considered are so called
\emph{anti-trees}. Specifically, anti-trees are highly connected
graphs that are characterized by the property that a vertex in a
sphere (with respect to a root vertex) is connected to every
neighbor in the succeeding sphere, see  Figure~\ref{f:antitree}
below for an example.

\begin{center}
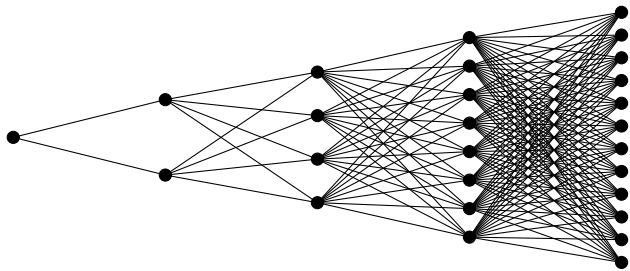
\begin{figure}[h!]
 \begin{tikzpicture}[scale=1]
  \filldraw (-1*2,{0}) circle (0.8mm);
     \foreach \Y in {0,...,1} {
  \draw (-1*2,{0}) -- (0,{(\Y-1/2)/sqrt(1-0)});
        }
  \def\xab{0/0/1/-1/2 , 1/-1/2/-3/4 , 2/-3/4/-5/6 , 3/-5/6/1/0 }
  \foreach \x/\a/\b/\A/\B in \xab {
    \foreach \y in {\a,...,\b} {
      \filldraw (2*\x,{(\y-1/2)/sqrt(\b-\a)}) circle (0.8mm);
      \ifthenelse{\A>\B}{}{
        \foreach \Y in {\A,...,\B} {
          \draw (2*\x,{(\y-1/2)/sqrt(\b-\a)}) -- (2*\x+2,{(\Y-1/2)/sqrt(\B-\A)});
        }
      }
    }
  }
  \end{tikzpicture}
  \caption{An anti-tree with $s_{r+1}=2^{r}$}\label{f:antitree}
\end{figure}
\end{center}
So,  for an anti-tree  let $s_{r}$ be the number of vertices with
combinatorial graph distance  $r$ to a root vertex and let
$v_r=s_{0}+\ldots +s_{r}$, $r\ge0$. Then, Wojciechowski shows that
the anti-tree with standard weights  and the counting
  measure is stochastically complete if
and only if
\begin{align*}
    \sum_{r=0 }^{\infty}\frac{v_{r}}{s_{r}s_{r+1}}=\infty
\end{align*}
which results in a threshold of $v_{r}\sim r^{3}$. Hence, there are graphs with only little more than cubic volume growth that are not stochastically complete. This stands in clear contrast to Grigor'yan's volume growth bound for manifolds which was about $e^{r^{2}}$. Thus, the example clearly shows that the combinatorial graph distance is the wrong candidate to obtain an analogue of Grigor'yan's result.

Moreover, it was shown later in \cite{GHM} that the threshold of $r^{3}$ is sharp for the combinatorial graph distance.

%%% SUBSECTION %%%%%%%%%%%%%%%%%%%%%%%%%%%%%%%%%%%%%%%%%%%
\subsection{Grigor'yan's result for graphs with intrinsic metrics}

The following theorem was proven by Folz \cite{Fol} and later a
simplified proof was given by Huang \cite{Hu1}. Before, a first step in
this direction was taken by Masamune/Uemura \cite{MU} and Grigor'yan/Huang/Masamune \cite{GHM}.

\begin{thm}[Theorem~1 of \cite{Fol}] Let $b$ be a locally finite, connected graph over $(X,m)$ and $\rho$ be an intrinsic metric  with finite balls (B) and finite jump size (J). If for some $x\in X$
\begin{align*}
    \inf_{r_{0}>0}\int_{r_{0}}^{\infty} \frac{r}{\log(m(B_{r}(x)))}dr=\infty,
\end{align*}
then the graph is stochastically complete.
\end{thm}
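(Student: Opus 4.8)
The plan is to establish stochastic completeness through its equivalent formulation: by the characterization above it suffices to rule out non-trivial bounded solutions of $\LL u=\lambda u$ for one fixed $\lambda<0$. Writing $\lambda=-\alpha$ with $\alpha>0$ and using the elementary Kato-type inequality $\LL|u|\le\mathrm{sgn}(u)\,\LL u=-\alpha|u|$, which is checked directly pointwise on graphs, I would replace $u$ by $v=|u|$ and thereby assume that $v\ge0$ is bounded, non-trivial and satisfies $\LL v\le-\alpha v$. Since $v\not\equiv0$, the quantity $\Omega(r):=\sum_{x\in B_r(x_0)}|v(x)|^2m(x)$ is strictly positive for all sufficiently large $r$; moreover $\Omega(r)<\infty$ and $\Omega(r)\le\|v\|_\infty^2\,m(B_r(x_0))$ because (B) forces the balls to be finite. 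The aim is to show that the volume hypothesis forces $\Omega\equiv0$, which is a contradiction.

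The analytic heart is a Caccioppoli estimate obtained by testing $\LL v\le-\alpha v$ against $\eta^2v$, where $\eta=\eta_{B_r,R-r}$ is the intrinsic cut-off of Section~\ref{s:Metric:estimate}. As (B) ensures $\eta^2v\in C_c(X)$ and local finiteness (a consequence of (B) and (J)) gives $v\in\FF$, Green's formula applies and yields $\QQ(v,\eta^2v)=\langle\LL v,\eta^2v\rangle\le-\alpha\|\eta v\|^2$. Expanding the form $\QQ(v,\eta^2v)$ by the discrete product rule isolates the non-negative energy of $\eta v$ together with an error term $\tfrac12\sum_{x,y}b(x,y)(\eta(x)-\eta(y))^2v(x)v(y)$; bounding $v(x)v(y)\le\tfrac12(v(x)^2+v(y)^2)$ and invoking exactly the cut-off estimate of Section~\ref{s:Metric:estimate} controls this error by $(R-r)^{-2}$ times the $\ell^2$-mass of $v$ on the annulus $B_{R+s}\setminus B_{r-s}$. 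Discarding the non-negative energy and killing contributions leaves
\begin{align*}
\alpha\,\Omega(r)\le\frac{1}{2(R-r)^2}\big(\Omega(R+s)-\Omega(r-s)\big),
\end{align*}
equivalently $\Omega(R+s)\ge\big(1+2\alpha(R-r)^2\big)\,\Omega(r-s)$. Here finiteness of the jump size $s$, assumption (J), is precisely what makes the shifts $r\mapsto r\pm s$ harmless, so that this growth relation can be iterated over a chain of radii.

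The concluding step is the classical Grigor'yan volume iteration. Choosing radii $t_0<t_1<\cdots\to\infty$ and applying the recursion repeatedly gives
\begin{align*}
\Omega(t_N)\ge\prod_{n=0}^{N-1}\big(1+c\,(t_{n+1}-t_n-2s)^2\big)\,\Omega(t_0),\qquad c=2\alpha,
\end{align*}
so that, after taking logarithms and using $\Omega(t_N)\le\|v\|_\infty^2\,m(B_{t_N})$,
\begin{align*}
\sum_{n=0}^{N-1}\log\big(1+c\,(t_{n+1}-t_n-2s)^2\big)\le\log m(B_{t_N})+\log\|v\|_\infty^2-\log\Omega(t_0).
\end{align*}
The hypothesis $\inf_{r_0>0}\int_{r_0}^\infty r/\log m(B_r)\,dr=\infty$ is exactly the condition under which the radii can be selected so that the left-hand sum outgrows $\log m(B_{t_N})$ by an amount tending to $+\infty$, forcing $\Omega(t_0)=0$ and contradicting $\Omega(t_0)>0$. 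I expect the principal obstacle to be this optimization: balancing the gain from enlarging the gaps $t_{n+1}-t_n$ against the cost of reaching radii where $m(B_r)$ is large, so that the trade-off reproduces the integral test sharply while keeping the $\pm s$ shifts under control. The reduction and the energy estimate, by contrast, are routine once the cut-off machinery of Section~\ref{s:Metric:estimate} and Green's formula are in hand.
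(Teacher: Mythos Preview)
Your plan is sound and the Caccioppoli step is computed correctly, but the route you take is \emph{genuinely different} from the one the paper describes. The paper follows Folz: one passes from the discrete graph to an associated \emph{quantum graph} (cable system) by replacing edges with real intervals and equipping the result with the operator $-\tfrac{d^2}{dx^2}$ plus suitable vertex conditions. The resulting Dirichlet form is \emph{strongly local}, so Sturm's version of Grigor'yan's volume criterion applies directly to it; the remaining work is a comparison showing that stochastic completeness transfers back to the discrete graph, which Folz achieves by adding loops at vertices (Huang's variant replaces this by a time change via Dirac measures on the vertices). In short, the paper outsources the analytic core to the strongly local theory and invests effort in the transfer.

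Your proposal instead runs Grigor'yan's original $L^\infty$-solution argument \emph{intrinsically on the graph}: Kato inequality, Caccioppoli estimate via the cut-off machinery of Section~\ref{s:Metric:estimate}, and then the radius-iteration. This is self-contained and avoids the quantum-graph detour, at the cost of having to redo the iteration with the $\pm s$ shifts caused by non-locality---exactly the point you flag as the principal obstacle. That obstacle is real but manageable: once the gaps satisfy $t_{n+1}-t_n\ge 4s$ one has $t_{n+1}-t_n-2s\ge\tfrac12(t_{n+1}-t_n)$, so the shifted recursion is equivalent (up to constants) to the unshifted one, and the standard Grigor'yan optimisation goes through. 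Your approach is closer in spirit to the argument of Huang--Shiozawa \cite{HS}, which the survey mentions as an alternative proof of stochastic completeness, than to Folz's. The quantum-graph route buys you the already-proved strongly local result at the price of a somewhat delicate comparison; your direct route buys a cleaner, more transparent argument at the price of redoing the iteration in the discrete, finite-jump-size setting.
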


The assumption of finite jump size and the assumption that the metric $\rho$ is intrinsic were weakened by Huang \cite{Hu1} to so called
weakly adapted metrics that is a metric $\rho$ for which there is $c>0$ such that
$$\sum_{y\in X}b(x,y)(\rho(x,y)\wedge c)^{2}\leq m(x),\quad x\in X.$$

The approach of Folz uses quantum graphs. These are graphs were the
edges are real intervals which are glued at the vertices. For some
background see Kuchment \cite{Ku}. Barlow/Bass \cite{BB}  already
used a similar idea to study  cable systems. The corresponding
Laplace operator acts as  $-\tfrac{d^{2}}{dx^{2}}$ on the intervals
and certain boundary conditions are imposed such that it becomes a
selfadjoint operator associated to a Dirichlet form. This Dirichlet
form is strongly local and, thus, Grigor'yan's result holds by the
virtue of \cite{Stu}. In order to compare the diffusion on the
quantum graph and the discrete graph one has to add a certain number
of loops to each vertex \cite{Fol}. Alternatively, one can model the
time change by appropriate Dirac measures on the vertices
\cite{Hu1}.

%% SUBSECTION %%%%%%%%%%%%%%%%%%%%%%%%%%%%%%%%%%%%%%%%%%%%

\section{Upper escape rate}\label{s:UER}
Khintchine's law of the iterated logarithm
states for the Brownian motion $(B_{t})_{t\ge0}$ on $\R$,
\begin{align*}
\limsup_{t\to\infty}
\frac{|B_t|}{\sqrt{ 2t \log \log t}}
= 1,
\end{align*}
almost surely.
In particular,  for the probability conditioned on the Brownian motion starting at zero, we have
\begin{align*}
    \PP_{0} \Big( |B_t| \leq R(t) =\sqrt{(2 + \eps)t \log \log t} \mbox{ for all sufficiently large $t$ }\Big) = 1.
\end{align*}
for arbitrary $\eps>0$. Such a function $R$ is called an \emph{upper escape rate function}. Below we discuss first results on manifolds in this direction by Grigor'yan, Hsu and Qin before we get to weighted graphs and intrinsic metrics.

%% SUBSECTION %%%%%%%%%%%%%%%%%%%%%%%%%%%%%%%%%%%%%%%%%%%%

\subsection{Upper escape rate functions for manifolds}
For a complete connected Riemannian manifold $M$ which satisfies
\begin{align*}
    \inf_{r_{0}>0}\int_{r_{0}}^{\infty} \frac{r}{\log(\mathrm{vol}(B_{r}(x)))}dr=\infty,
\end{align*}
for some $x\in M$, we have seen in the previous section that $M$ is stochastically complete. Imposing this assumption the result of  Hsu/Qin \cite{HQ} gives for the Brownian motion $(B_{t})_{t\ge0}$ on $M$ and the probability $\PP_{x}$ conditioned on the process to start in $x\in M$,
\begin{align*}
     \PP_{x} \Big( |B_t| \leq R(t) \mbox{ for all sufficiently large $t$ }\Big) = 1,
\end{align*}
where the upper escape rate function $R$ is a multiple of the inverse function of
\begin{align*}
    t\mapsto \int_{6}^{t}\frac{r}{\log\mathrm{vol}(B_{r})+\log\log r}dr.
\end{align*}
Earlier results in this direction are found in \cite{Gri99b} and \cite{GH09}. For strongly local Dirichlet forms this result was recently generalized by Ouyang, \cite{O}.

%%% SUBSECTION %%%%%%%%%%%%%%%%%%%%%%%%%%%%%%%%%%%%%%%%%%%
\subsection{Upper escape rate functions for weighted graphs}
For a graph $b$ over $(X,m)$, let $(X_{t})$ be  Markov process with generator $L$. The process is characterized by the   formula
\begin{align*}
    e^{-tL}f(x)=\EE_{x}(f(X_{t})),\qquad t>0,\, f\in C_{c}(X).
\end{align*}
The following theorem on the upper escape rate is due to
Huang/Shiozawa \cite{HS} which improves the results of Huang
\cite{Hu3}.

\begin{thm}[Theorem~1.7 of \cite{HS}] Let $b$ be a locally finite connected graph over $(X,m)$ such that $\inf_{x\in X}m(x)>0$ and $\rho$ be an intrinsic metric  with finite balls (B) and finite jump size (J). If for some $x\in X$
\begin{align*}
    \inf_{r_{0}>0}\int_{r_{0}}^{\infty} \frac{r}{\log(m(B_{r}(x)))}dr=\infty,
\end{align*}
then
\begin{align*}
     \PP_{x} \Big( |X_t| \leq R(t) \mbox{ for all sufficiently large $t$ }\Big) = 1,
\end{align*}
where the upper escape rate function $R$ is a multiple of the inverse function of
\begin{align*}
    t\mapsto \int_{t_0}^{t}\frac{r}{\log\mathrm{vol}(B_{r})+\log\log r}dr.
\end{align*}
for some  $t_{0}$.
\end{thm}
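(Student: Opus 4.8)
The plan is to reduce the almost sure bound $|X_t|\le R(t)$ to a Borel--Cantelli estimate for exit times from distance balls, and to feed in the volume growth hypothesis only to calibrate $R$. Throughout write $|X_t|=\rho(x,X_t)$ and $\tau_R=\inf\{t\ge0: X_t\notin B_R(x)\}$. First I would observe that the hypotheses already place us in a tame situation. By the stochastic completeness theorem of Folz and Huang quoted above, the condition $\inf_{r_0>0}\int_{r_0}^{\infty}r/\log m(B_r(x))\,dr=\infty$ together with (B) and (J) implies that the process is non-explosive, so $(X_t)$ lives for all time and $\tau_R\nearrow\infty$. Moreover $\inf_x m(x)>0$ and finiteness of balls (B) make $m(B_r(x))$ finite, bounded below, and eventually increasing, so that $\log m(B_r(x))$ is a legitimate slowly varying weight.

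The analytic core is a one sided exit estimate of the shape
\begin{align*}
\PP_x(\tau_R\le t)\le C\,m(B_{2R}(x))^{\frac12}\exp\Big(-c\,\frac{(R-s)^{2}}{t}\Big),
\end{align*}
where $s$ is the jump size, finite by (J). I would derive it from a Davies--Gaffney type integrated maximum principle, which in the present discrete setting is precisely the cut-off energy computation carried out in Section~\ref{s:Metric:estimate}: testing the Dirichlet solution $u_t=\PP_\cdot(\tau_R>t)$ against the exponential weight $e^{\alpha\rho(x,\cdot)}$ and differentiating in $t$, the intrinsic inequality $\sum_y b(\cdot,y)\rho^2(\cdot,y)\le m(\cdot)$ absorbs the arising energy term into the measure with no degree or curvature cost, exactly as in that estimate. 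Finite jump size (J) is what confines the relevant jumps to the shell $B_{R+s}\setminus B_{R-s}$, so that one recovers the Gaussian exponent $(R-s)^2/t$ rather than a weaker Poissonian tail; summing the resulting off diagonal bound over annuli $B_{(k+1)R}\setminus B_{kR}$ produces the volume prefactor $m(B_{2R}(x))^{\frac12}$ (here $\inf_x m(x)>0$ keeps the normalisation $1/m(x)$ under control), and a standard maximal inequality upgrades the endpoint bound on $\PP_x(X_t\notin B_R)$ to the exit-time bound above.

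Taking logarithms, the estimate reads $\log\PP_x(\tau_R\le t)\lesssim \tfrac12\log m(B_{2R}(x))-c(R-s)^2/t$. I would then set $R=K\Phi^{-1}$ with $\Phi(\varrho)=\int_{t_0}^{\varrho}r/(\log m(B_r)+\log\log r)\,dr$ and $K$ a large constant, take a geometric sequence $t_n=2^n$ with $R_n=R(t_n)$, and consider the events $A_n=\{\tau_{R_n}\le t_{n+1}\}$. Differentiating $\Phi$ shows $R(t)^2$ grows like $t(\log m(B_{R(t)})+\log\log t)$, so for $K$ large the exponent satisfies $c(R_n-s)^2/t_{n+1}-\tfrac12\log m(B_{2R_n})\gtrsim\log\log t_n\sim\log n$; thus $\PP_x(A_n)\lesssim n^{-K'}$ is summable, the decisive $\log\log$ term in the denominator of $\Phi$ being exactly what supplies this extra decay. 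By Borel--Cantelli almost surely only finitely many $A_n$ occur, so for all large $n$ and all $t\in[t_n,t_{n+1}]$ we have $|X_t|\le R_n=R(t_n)\le R(t)$ by monotonicity of $R$, which is the claim.

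The hard part will be the exit estimate of the second step, namely establishing the discrete Davies--Gaffney bound with the sharp Gaussian exponent. Both structural hypotheses are indispensable there: the intrinsic metric property is what lets the energy term be absorbed into $m$, while finite jump size (J) rules out long range jumps and is responsible for the Gaussian, rather than merely exponential, decay in $R^2/t$; dropping (J) degrades the exponent through the shell $B_{R+s}\setminus B_{R-s}$ and breaks the calibration. A viable alternative to a direct derivation, foreshadowed by Folz's treatment of stochastic completeness, is to transfer Grigor'yan's strongly local exit estimate from an associated quantum graph (with added loops encoding the time change) back to the discrete process, and then run the same Borel--Cantelli argument.
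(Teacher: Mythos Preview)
The paper does not actually contain a proof of this theorem: it is a survey, and the result is quoted from \cite{HS} without argument, the only comment being that ``the proof found in \cite{HS} also gives an alternative argument to show stochastic completeness under the volume growth condition above.'' There is therefore nothing in the paper to compare your proposal against.

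That said, your plan is the standard architecture for escape rate results of this type (going back to Grigor'yan and Hsu/Qin in the manifold case, and this is indeed the route taken in \cite{HS}): a Gaussian-type exit time bound combined with a Borel--Cantelli argument along a geometric time sequence, with the $\log\log$ correction in the definition of $\Phi$ supplying the summability. Two points deserve care if you actually carry this out. First, in the summability step you silently need to control $\log m(B_{2R_n})$ against $\log m(B_{R_n})$; without any doubling assumption this is not automatic, and one typically circumvents it by choosing the radii $R_n$ (rather than the times $t_n$) as the primary sequence and calibrating accordingly. Second, the exit time estimate you state, with the clean Gaussian exponent $(R-s)^2/t$, is the genuinely nontrivial input; in the discrete non-local setting the Davies method does yield such a bound under (J), but the derivation is more delicate than the cut-off computation of Section~\ref{s:Metric:estimate} alone suggests, and your alternative of transferring the strongly local estimate from an associated metric graph is closer to what is actually done in the literature.
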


Note that the proof found in \cite{HS} also gives an alternative argument to show stochastic completeness under the volume growth condition above.

It can be shown that the escape rate is sharp for anti-trees. Moreover, for weakly spherically symmetric graphs there are even more precise results, confer \cite[Section~6]{Hu3}.

%%%%%%%%%%%%%%%%%%%%%%%%%%%%%%%%%%%%%%%%%%%%%%%%%%%%%%%%%%
%% SECTION %%%%%%%%%%%%%%%%%%%%%%%%%%%%%%%%%%%%%%%%%%%%%%%
%%%%%%%%%%%%%%%%%%%%%%%%%%%%%%%%%%%%%%%%%%%%%%%%%%%%%%%%%%

\section{Subexponentially bounded solutions and the spectrum}\label{s:Shnol}
So far, we considered solutions for $\lm<0$. In this section we turn to solutions for $\lm\ge0$ to study  the spectrum $\si(L)$ of the operator $L$ which is included in $[0,\infty)$ since $L$ is positive.

The meta-theorem behind this is often referred to as a Shnol' type
theorem and it states that $\lm$ is in the spectrum if there is a
subexponentially bounded solution for $\lm$.  The classical Shnol'
theorem \cite{Sh} deals with the Schr\"odinger equation in $\R^{d}$.
It was rediscovered by Simon in \cite{Si1,Si2} and it was proven in
\cite{BLS} for strongly local Dirichlet forms by Boutet de
Monvel/Lenz/Stollmann.

A function $f\in C(X)$ on a connected graph is said to be \emph{subexponentially bounded} with respect to an intrinsic metric $\rho$ if for all $\al>0$ and some (all) $x\in X$
\begin{align*}
    e^{-\al\rho(x,\cdot)}f\in \ell^{2}(X,m).
\end{align*}

For bounded Laplacians on graphs a Shnol' type theorem was proven in \cite{HK}. Using intrinsic metrics we get a general version of this result. Such a result can be derived  from a  Shnol' inequality \cite[Corollary 12.2]{FLW} combined with a one-dimensional estimate. As it is stated here the theorem can be found in \cite[Theorem~19]{K3}.

\begin{thm}[Corollary 12.2 in \cite{FLW}]Let $(b,c)$ be a connected graph over $(X,m)$ and $\rho$ be an intrinsic metric  with finite balls (B) and finite jump size (J). If for some $\lm\in \R$ there is a non-trivial subexponentially bounded solution, then $\lm\in\si(L)$.
\end{thm}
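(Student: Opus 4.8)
The plan is to verify $\lambda\in\si(L)$ through Weyl's criterion: since $L$ is self-adjoint, it suffices to produce a sequence $f_{n}\in D(L)$ with $\|f_{n}\|=1$ and $\|(L-\lambda)f_{n}\|\to 0$. I would build this sequence by localizing the given non-trivial subexponentially bounded solution $u$ (so $\LL u=\lambda u$) with the cut-off functions of Section~\ref{s:Metric:estimate}. Fix a base point $x_{0}$, choose radii $r_{n}<R_{n}$, and set
\[
\eta_{n}=\Big(1\wedge\tfrac{1}{R_{n}-r_{n}}\big(R_{n}-\rho(x_{0},\cdot)\big)\Big)\vee 0,
\]
so that $\eta_{n}=1$ on $B_{r_{n}}(x_{0})$, $\eta_{n}=0$ off $B_{R_{n}}(x_{0})$, and $\eta_{n}$ is $\tfrac{1}{R_{n}-r_{n}}$-Lipschitz with respect to $\rho$. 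Because (B) and (J) force the graph to be locally finite, each $f_{n}:=\eta_{n}u$ lies in $C_{c}(X)\subseteq D(L)$ by Lemma~\ref{l:DF:C_c}, and there $L$ acts as $\LL$; moreover $\|f_{n}\|^{2}\ge\|u 1_{B_{r_{n}}(x_{0})}\|^{2}>0$ for large $n$ since $u\neq 0$.

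The first main step is the \emph{Shnol' inequality}, i.e.\ the estimate that $(\LL-\lambda)f_{n}$ is small. Using $\LL u=\lambda u$ and the product rule, a direct computation gives the commutator identity
\[
(\LL-\lambda)(\eta_{n}u)(x)=\frac{1}{m(x)}\sum_{y\in X}b(x,y)\big(\eta_{n}(x)-\eta_{n}(y)\big)u(y),
\]
whose right-hand side vanishes unless $x$ lies in the band $B_{R_{n}+s}(x_{0})\setminus B_{r_{n}-s}(x_{0})$, the finite jump size $s$ from (J) guaranteeing that this band has finite metric width. Estimating the norm of this expression by a Cauchy--Schwarz step and then invoking the intrinsic metric inequality exactly as in the chain of estimates of Section~\ref{s:Metric:estimate} (this is the content of \cite[Corollary~12.2]{FLW}, the symmetrisation there being what keeps the bound free of any global degree constant) yields
\[
\|(\LL-\lambda)f_{n}\|^{2}\le\frac{C}{(R_{n}-r_{n})^{2}}\,\big\|u 1_{B_{R_{n}+2s}(x_{0})\setminus B_{r_{n}-2s}(x_{0})}\big\|^{2}.
\]

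It remains to choose the radii so that the right-hand side is negligible compared with $\|f_{n}\|^{2}\ge\|u 1_{B_{r_{n}}(x_{0})}\|^{2}$; this is the promised one-dimensional estimate. Write $V(r)=\|u 1_{B_{r}(x_{0})}\|^{2}$, a non-decreasing function. Subexponential boundedness of $u$ means $\sum_{x}e^{-2\al\rho(x_{0},x)}|u(x)|^{2}m(x)<\infty$ for every $\al>0$, which is equivalent to $\limsup_{r\to\infty}\tfrac1r\log V(r)\le 0$, i.e.\ $V$ grows subexponentially. The elementary fact I would use is: if $V$ is non-decreasing, eventually positive and subexponential, then there exist $r_{n}\to\infty$ and widths $\ell_{n}\to\infty$ with $V(r_{n}+\ell_{n})\le \eps_{n}\,\ell_{n}^{2}\,V(r_{n})$ for some $\eps_{n}\to0$. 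Indeed, for fixed $\ell$ and $\eps$ with $\eps\ell^{2}>1$, if $V(r+\ell)\ge \eps\ell^{2}V(r)$ held for all large $r$ then iteration would force exponential growth of $V$, a contradiction; hence arbitrarily large $r$ with $V(r+\ell)<\eps\ell^{2}V(r)$ exist, and a diagonal choice over $\eps_{n}\to0$, $\ell_{n}=\lceil\eps_{n}^{-1/2}\rceil$ does the job. Taking $R_{n}=r_{n}+\ell_{n}$ and absorbing the harmless $\pm 2s$ shifts (legitimate since $\ell_{n}\to\infty$ while $s<\infty$), the displayed Shnol' bound gives $\|(\LL-\lambda)f_{n}\|^{2}/\|f_{n}\|^{2}\to 0$. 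Normalising $f_{n}$ produces the desired Weyl sequence and hence $\lambda\in\si(L)$.

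I expect the \textbf{main obstacle} to be this interplay between the weight $(R_{n}-r_{n})^{-2}$ in the Shnol' bound and the subexponential growth: one must extract radii along which the band mass is small \emph{relative to the ball mass even after multiplying by $\ell_{n}^{2}$}, and this is exactly where subexponential (rather than merely polynomial) growth and the finiteness of the jump size are both essential. The bookkeeping that keeps the commutator estimate free of a degree factor --- the symmetrisation underlying \cite[Corollary~12.2]{FLW} --- is the other point requiring care, but it is a one-off algebraic manipulation rather than a genuine difficulty.
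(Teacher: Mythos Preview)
Your approach is essentially the same as the paper's: the paper records only that ``the proof is based again on a Caccioppoli type inequality'' and that the result follows from the Shnol' inequality \cite[Corollary~12.2]{FLW} combined with a one-dimensional estimate, and you have correctly unpacked both ingredients --- the localisation $f_n=\eta_n u$, the commutator identity, the Shnol'/Caccioppoli bound from \cite{FLW}, and the radius-selection argument exploiting subexponential growth of $V(r)=\|u1_{B_r}\|^2$. Your treatment of the one-dimensional step is in fact more explicit than the paper's.

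One point of caution: the displayed bound $\|(\LL-\lambda)f_n\|^2\le C(R_n-r_n)^{-2}\|u1_{\text{band}}\|^2$ does \emph{not} follow from a single Cauchy--Schwarz application plus the estimate of Section~\ref{s:Metric:estimate} --- that route leaves a factor $\sum_y b(x,y)|u(y)|^2$ (equivalently a weighted degree) rather than $m(x)$. What actually removes the degree is a genuine Caccioppoli step controlling the local energy $\sum_{\text{band}} b(x,y)(u(x)-u(y))^2$ of the solution, which is why the paper calls the key inequality a Caccioppoli type inequality rather than merely a cut-off estimate. You acknowledge this (``the symmetrisation there being what keeps the bound free of any global degree constant''), but it is more than a one-off algebraic manipulation: it is precisely the analytic content borrowed from \cite{FLW}. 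Since both you and the paper defer this step to \cite{FLW}, your outline is in agreement with the paper's proof.
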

The proof is based again on a Caccioppoli type inequality.

A basic corollary which we improve later in Section~\ref{s:Brooks} is the following.

\begin{corollary}Let $b$ be a  connected graph over $(X,m)$ and $\rho$ be an intrinsic metric  with finite balls (B) and finite jump size (J). If
\begin{align*}
    \limsup_{r\to\infty}\frac{1}{r}\log m(B_{r}(x))\leq 0,
\end{align*}
for some $x\in X$, then $\inf\si(L)=0$.
\end{corollary}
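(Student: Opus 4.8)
The plan is to deduce this corollary directly from the preceding Shnol' type theorem by exhibiting, for every $\eps>0$, a nontrivial subexponentially bounded solution (in fact a subharmonic-type test object) whose existence forces $\eps \in \si(L)$, or more economically, to show that the bottom of the spectrum $\inf\si(L)$ is $0$ by producing a Weyl sequence. Since $L$ is positive we already know $\inf\si(L)\ge 0$, so the only content is the reverse inequality $\inf\si(L)\le 0$, i.e. $\inf\si(L)=0$. The hypothesis $\limsup_{r\to\infty}\tfrac1r\log m(B_r(x))\le 0$ says the volume grows subexponentially, and the natural strategy is to feed the constant function (which is formally harmonic, i.e. a solution for $\lm=0$) into the Shnol' machinery.

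\smallskip

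\noindent\textbf{Approach via the Shnol' theorem.} First I would observe that the constant function $f\equiv 1$ is a solution of $\LL f = 0$, since $\LL$ annihilates constants when $c\equiv 0$ (note the corollary is stated for a graph $b$, so $c\equiv 0$). Thus $\lm=0$ admits the nontrivial solution $f\equiv 1$. To invoke the theorem I must check that $f\equiv 1$ is \emph{subexponentially bounded} with respect to $\rho$, i.e. that $e^{-\al\rho(x,\cdot)}\in\ell^2(X,m)$ for every $\al>0$. Writing the norm as a sum over annuli $B_r\setminus B_{r-1}$ and using the volume bound, I would estimate
\begin{align*}
\sum_{y\in X} e^{-2\al\rho(x,y)} m(y)
\le \sum_{r\ge 1} e^{-2\al(r-1)} m(B_r(x)).
\end{align*}
The subexponential growth hypothesis gives, for any $\be\in(0,\al)$, a constant $C_\be$ with $m(B_r(x))\le C_\be e^{\be r}$ for all large $r$, so the series is dominated by a convergent geometric series $\sum_r e^{-2\al(r-1)}e^{\be r}$. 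Hence $e^{-\al\rho(x,\cdot)}\cdot 1\in\ell^2(X,m)$ for every $\al>0$, which is exactly subexponential boundedness of the constant solution.

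\smallskip

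\noindent\textbf{Conclusion and the main obstacle.} With a nontrivial subexponentially bounded solution for $\lm=0$ in hand, the Shnol' theorem above yields $0\in\si(L)$, and combined with positivity of $L$ this gives $\inf\si(L)=0$. The assumptions (B) and (J) needed for the Shnol' theorem are already in the corollary's hypotheses, so they transfer at no cost. The main obstacle I anticipate is purely technical: converting the $\limsup$ volume bound into a clean summable tail estimate requires care in how the annular decomposition interacts with the metric $\rho$ rather than the combinatorial distance, and in choosing $\be<\al$ so that the geometric series converges; finite balls (B) guarantee each annulus has finite measure so the rearrangement is legitimate. A subtlety worth flagging is that the hypothesis is stated for \emph{some} base point $x$, while subexponential boundedness is a \emph{some (all)} condition—so I would remark that finiteness of $e^{-\al\rho(x_0,\cdot)}\cdot 1$ for one $x_0$ propagates to all $x$ by the triangle inequality, since $\rho(x,\cdot)\ge\rho(x_0,\cdot)-\rho(x_0,x)$ changes the estimate only by the harmless constant factor $e^{\al\rho(x_0,x)}$.
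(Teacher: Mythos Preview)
Your proposal is correct and follows exactly the paper's approach: the paper's proof simply observes that the constant functions are solutions for $\lm=0$ and are subexponentially bounded under the volume hypothesis, and then invokes the Shnol' theorem. Your annular decomposition is precisely the computation implicit in the paper's one-line verification of subexponential boundedness, and your remarks on $c\equiv 0$, positivity of $L$, and basepoint independence are the right supporting observations.
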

\begin{proof}Under the assumption above the constant functions which are solutions for $\lm=0$ are subexponentially bounded. Hence, the statement follows from the theorem above.
\end{proof}

%%%%%%%%%%%%%%%%%%%%%%%%%%%%%%%%%%%%%%%%%%%%%%%%%%%%%%%%%%
%% SECTION %%%%%%%%%%%%%%%%%%%%%%%%%%%%%%%%%%%%%%%%%%%%%%%
%%%%%%%%%%%%%%%%%%%%%%%%%%%%%%%%%%%%%%%%%%%%%%%%%%%%%%%%%%

\section{Isoperimetric constants and lower spectral bounds}\label{s:Cheeger}
In this section we aim for lower bounds on the bottom of the
spectrum
\begin{align*}
    \lm_{0}(L)=\inf \si(L)
\end{align*}
via so called isoperimetric estimates. Such estimates are often
referred to as Cheeger's inequality.

We first discuss the result on manifolds going back to Cheeger from
1960. Then, we discuss how an analogous  result was proven in the
80's for the normalized Laplacian by Dodziuk/Kendall and what
problems occur for the operator $\Delta$. Finally, we examine how
intrinsic metrics can be used to establish this inequality for
general graph Laplacians.

%%% SUBSECTION %%%%%%%%%%%%%%%%%%%%%%%%%%%%%%%%%%%%%%%%%%%

\subsection{Cheeger estimates for manifolds}
For a non-compact  Riemannian manifold $M$ the isoperimetric
constant or Cheeger constant is defined as
\begin{align*}
    h_{M}=\inf_{S}\frac{\mathrm{Area(\partial S)}}{\mathrm{vol}(\mathrm{int}(S))},
\end{align*}
where  $S$ runs  over all hypersurfaces cutting $M$ into a
precompact piece $\mathrm{int}(S)$ and an unbounded piece. Denote by
$\lm_{0}(\Delta_{M})$  the bottom of the spectrum of the
Laplace-Beltrami. The well known Cheeger inequality reads as
\begin{align*}
    \lm_{0}(\Delta_{M})\ge\frac{h_{M}^{2}}{4}.
\end{align*}
See \cite{Ch} for Cheeger's original work on the compact case and \cite{Br2} for a discussion of the non-compact case.

%%% SUBSECTION %%%%%%%%%%%%%%%%%%%%%%%%%%%%%%%%%%%%%%%%%%%
\subsection{Cheeger estimates for graphs with standard weights}
There is an enormous  amount of  literature on isoperimetric
inequalities especially for finite graphs. Here, we restrict
ourselves to infinite graphs and only mention \cite{AM} for finite
graphs.

The \emph{boundary} of a set $W\subseteq X$ is defined as  the set of edges emanating from $W$, i.e.,
\begin{align*}
    \partial W=\{(x,y)\in W\times X\setminus W\mid x\sim y\}
\end{align*}
In 1984  Dodziuk, \cite{Do}, considered graphs with standard weights and the counting measure.  The isoperimetric constant he studied  is closely related to
\begin{align*}
h_{1}=\inf_{\mbox{\scriptsize{$W\subseteq X$ finite}}} \frac{|\partial W|}{|W|}
\end{align*}
and  Dodziuk's proof yields
\begin{align*}
    \lm_{0}(\Delta)\ge \frac{h_{1}^{2}}{2D},
\end{align*}
with $D=\sup_{x\in X}\deg(x)$. This analogue of Cheeger's inequality
is useful for graphs with bounded vertex degree. However, for
unbounded vertex degree the bound becomes trivial. The following
example illustrates how this bound can be seen to be non optimal.

\begin{example}Let $T_{k}$, $k\ge2$, be the $k$-regular rooted tree with standard weights. We glue $T_{k}$ to $T_{k^{4}}$  at the root and denote the resulting graph by $G_{k}$. We find that the infimum of the bottom of the spectrum of $\Delta_{G_{k}}$ on $G_{k}$ is assumed at $\lm_{0}(\Delta_{G_{k}})=\lm_{0}(\Delta_{T_{k}})=((k+1)-2\sqrt{k})$. In particular, $\lm_{0}(\Delta_{G_{k}})\to\infty$ for $k\to\infty$. The constant $h_{1}({G_{k}})$ is assumed by considering larger and larger balls on $T_{k}$, i.e., $h_{1}({G_{k}})=h_{1}({T_{k}})=k$. Finally, $D=k^{4}$. Hence, the bound in Dodziuk's inequality is $k^2/2(k+1)^{4}\to0$, $k\to\infty$. In summary, $\lm_{0}\to\infty$ while $h_{1}^{2}/2D\to0$ as $k\to\infty$.
\end{example}

Two years later Dodziuk and Kendall  \cite{DK} proposed a solution to this issue by considering graphs with standard weights and the normalizing measure $n=\deg$ instead. The corresponding isoperimetric constant is
\begin{align*}
h_{n}=\inf_{\mbox{\scriptsize{$W\subseteq X$ finite}}} \frac{|\partial W|}{\deg(W)}
\end{align*}
and they proved in \cite{DK} for the normalized Laplacian $\Delta_{n}$
\begin{align*}
    \lm_{0}(\Delta_{n})\ge \frac{h_{n}^{2}}{2}.
\end{align*}
This analogue of Cheeger's inequality does not have the disadvantage illustrated in the example above. It seems that from this point on the operator $\Delta$ was rather neglected in the spectral geometry of graphs and the normalized Laplacian $\Delta_{n}$ gained momentum.

This estimate can be  generalized to  arbitrary edge weights $b$. Define the area of the boundary of a finite set $W\subseteq X$ by
\begin{align*}
    b(\partial W)=\sum_{(x,y)\in\partial W}b(x,y)
\end{align*}
and for the normalizing measure $n$  with $n(W)=\sum_{x,y\in
W}b(x,y)$ the normalized Cheeger constant is given by
\begin{align*}
\al_{n}=\inf_{\mbox{\scriptsize{$W\subseteq X$ finite}}} \frac{b(\partial W)}{n(W)}.
\end{align*}
With further improvements to Dodziuk/Kendall's bound found in \cite{Fuj2,KL2} one has for the operator $L$ associated to the graph $b$ over $(X,n)$
\begin{align*}
 \lm_{0}(L)\ge 1-\sqrt{1-{\al_{n}^{2}}},
\end{align*}
where the left hand side can be seen to be larger than ${\al_{n}^{2}}/{2}$ by the Taylor expansion of the square root.

%%% SUBSECTION %%%%%%%%%%%%%%%%%%%%%%%%%%%%%%%%%%%%%%%%%%%

\subsection{Cheeger estimates involving intrinsic
metrics}\label{s:Cheeger:intrinsic} The considerations in the
previous sections above suggest that intrinsic metrics allow to
prove the analogous  results for general graph Laplacians. So, the
question is where the metric actually appears in the definition of
the isoperimetric constant.

Revisiting the definition of the area of the boundary of  the set $W$ above we find that
\begin{align*}
    b(\partial W)=\sum_{(x,y)\in\partial W}b(x,y)=\sum_{(x,y)\in\partial W}b(x,y)d(x,y)
\end{align*}
with the combinatorial graph distance $d$ on the right hand side. Remember that $d$ is an intrinsic metric for the graph $b$ over $(X,n)$.

Hence, replacing $d$ by an intrinsic metric $\rho$ for a graph $b$ over $(X,m)$ we define
\begin{align*}
    \mathrm{Area}(\partial W)=\sum_{(x,y)\in\partial W}b(x,y)\rho(x,y).
\end{align*}
That is we take the length of an edge into consideration to measure the area of the boundary.
We define
\begin{align*}
    \al=\inf_{\mbox{\scriptsize{$W\subseteq X$ finite}}}\frac{\mathrm{Area}(\partial W)}{m(W)},
\end{align*}
and  obtain the following theorem which is  found in \cite{BKW}.

\begin{thm}[Theorem~1 in \cite{BKW}] Let $b$ be a graph over $(X,m)$ and let $\rho$ be an intrinsic metric. Then,
\begin{align*}
    \lm_{0}(L)\ge \frac{\al^{2}}{2}.
\end{align*}
\end{thm}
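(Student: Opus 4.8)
The plan is to reduce the statement to the form inequality $Q(f)\ge \tfrac{\al^2}{2}\|f\|^2$ for all $f\in C_c(X)$. Since $Q$ is regular, $C_c(X)$ is a form core, so $\lm_0(L)=\inf\{Q(f)/\|f\|^2\mid f\in C_c(X),\,f\neq 0\}$ and the form inequality yields the claim. By the elementary pointwise estimate $\bigl|\abs{f}(x)-\abs{f}(y)\bigr|\le\abs{f(x)-f(y)}$ together with $\abs{f}^2=f^2$, the explicit expression for $Q$ gives $Q(\abs{f})\le Q(f)$ and $\|\abs{f}\|=\|f\|$, so I may assume throughout that $f\ge 0$.

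The first real step is a discrete co-area (Cheeger) inequality: for every $g\ge 0$ in $C_c(X)$,
\begin{align*}
 \sum_{x,y:\,g(x)>g(y)} b(x,y)\rho(x,y)\bigl(g(x)-g(y)\bigr)\ge \al\sum_{x\in X} g(x)\,m(x).
\end{align*}
The idea is a layer-cake decomposition: for $g(x)\ge g(y)\ge 0$ one writes $g(x)-g(y)=\int_0^\infty\bigl(1_{\{g\ge t\}}(x)-1_{\{g\ge t\}}(y)\bigr)\,dt$, so the left-hand side equals $\int_0^\infty \mathrm{Area}(\partial W_t)\,dt$ with $W_t=\{g\ge t\}$. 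Each $W_t$ is finite for $t>0$ because $g$ has finite support, hence $\mathrm{Area}(\partial W_t)\ge \al\, m(W_t)$ by definition of $\al$; integrating and using $\int_0^\infty m(W_t)\,dt=\sum_x g(x)m(x)$ finishes this step.

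Next I apply the co-area inequality with $g=f^2$ and factor $f(x)^2-f(y)^2=(f(x)+f(y))(f(x)-f(y))$. A Cauchy--Schwarz estimate on the resulting edge sum separates a gradient factor, bounded by $Q(f)^{1/2}$ since $\sum_{x,y:f(x)>f(y)}b(x,y)(f(x)-f(y))^2\le Q(f)$, from a second factor. The decisive point, where the intrinsic metric hypothesis enters, is the bound on the second factor: using $(f(x)+f(y))^2\le 2(f(x)^2+f(y)^2)$ and the symmetry of $b$ and $\rho$ to pass to the full ordered sum,
\begin{align*}
 \sum_{x,y:\,f(x)>f(y)} b(x,y)\rho(x,y)^2\bigl(f(x)^2+f(y)^2\bigr)\le \sum_{x\in X} f(x)^2\sum_{y\in X} b(x,y)\rho(x,y)^2\le \sum_{x\in X} f(x)^2 m(x)=\|f\|^2,
\end{align*}
the last inequality being exactly the defining condition of an intrinsic metric. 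Combining the co-area inequality with these two bounds gives $\al\|f\|^2\le \sqrt{2}\,Q(f)^{1/2}\|f\|$, i.e.\ $Q(f)\ge\tfrac{\al^2}{2}\|f\|^2$, as required.

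I expect the main obstacle to be the careful bookkeeping in the co-area step: matching the ordered-pair sum over $\{g(x)>g(y)\}$ with the boundary $\partial W_t$ of the super-level sets (ties $g(x)=g(y)$ contribute nothing), and verifying finiteness of the level sets so that $\al$ may legitimately be applied. Everything after that is a clean Cauchy--Schwarz argument in which the intrinsic metric inequality does precisely the work that the combinatorial graph distance could not perform in the unbounded-degree case.
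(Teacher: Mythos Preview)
Your proof is correct and follows essentially the same route as the paper: the area and co-area formulas (your layer-cake decomposition with the super-level sets $W_t$), followed by Cauchy--Schwarz applied to $g=f^{2}$ after factoring $f(x)^{2}-f(y)^{2}$, with the intrinsic metric inequality absorbing the $\rho^{2}$-weighted sum into $\|f\|^{2}$. The paper's sketch is terser but the ingredients and their arrangement are the same.
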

The proof of the theorem is based on an area and a co-area formula. For $f\ge0$ let
\begin{align*}
    \Om_{t}=\{x\in X\mid f(x)>t\}.
\end{align*}
Then one can prove using   Fubini's theorem for $f\in C_{c}(X)$
\begin{align*}
m(\Om_{t})&=\sum_{x\in X}f(x)m(x)\\
\mathrm{Area}(\partial\Omega_{t})&=\sum_{x,y\in X}b(x,y)\rho(x,y)|f(x)-f(y)|.
\end{align*}
The proof is then basically Cauchy-Schwarz inequality and various algebraic manipulations.

One may also include potentials $c\ge0$ in the estimate by introducing edges from  vertices $x$ with $c(x)>0$ to virtual sibling vertices $\dot x$ with edge weight $b(x,\dot x)=c(x)$. The union of vertices $x\in X$ and $\dot x$ is denoted by $\dot X$. Furthermore, we extend an intrinsic metric $\rho$ on $X$ to the new edges via
\begin{align*}
    \rho(x,\dot x)=\frac{(m(x)-\sum_{y\in X}b(x,y)\rho(x,y)^{2})^{\frac{1}{2}}}{c(x)}.
\end{align*}
The extension of $\rho$ becomes an intrinsic metric when choosing $m(\dot x)=m(x)$. Now, we define $\al$ by taking the infimum of the quotient with the extension of $b$ and $\rho$ but as above only over subsets of $X$.

%%%%%%%%%%%%%%%%%%%%%%%%%%%%%%%%%%%%%%%%%%%%%%%%%%%%%%%%%%
%% SECTION %%%%%%%%%%%%%%%%%%%%%%%%%%%%%%%%%%%%%%%%%%%%%%%
%%%%%%%%%%%%%%%%%%%%%%%%%%%%%%%%%%%%%%%%%%%%%%%%%%%%%%%%%%

\section{Exponential volume growth and upper spectral bounds}\label{s:Brooks}
In this section, we discuss upper bounds for the bottom of the essential spectrum
\begin{align*}
    \lm_{0}^{\mathrm{ess}}(L)=\inf \si_{\mathrm{ess}}(L).
\end{align*}
The essential spectrum of an operator is the part of the spectrum which does not include discrete eigenvalues of finite multiplicity. Clearly, $\lm_{0}(L)\leq\lm_{0}^{\mathrm{ess}}(L)$.

We discuss the classical result on Riemannian manifolds going back to Brooks first. Then we present a corresponding result for the normalized Laplacian and show how the result fails in the case of Laplacian with respect to the counting measure. Finally, we employ intrinsic metrics to recover Brooks' result for general graph Laplacians.

%%% SUBSECTION %%%%%%%%%%%%%%%%%%%%%%%%%%%%%%%%%%%%%%%%%%%

\subsection{Brooks' theorem for manifolds}
Let $M$ be a complete connected non-compact Riemannian manifold with infinite volume. Let $\lm_{0}^{\mathrm{ess}}(\Delta_{M})$ be the bottom of the essential spectrum of the Laplace Beltrami operator $\Delta_{M}$. Let $\overline{\mu}_{M}$ be the upper exponential growth rate of the distance balls
\begin{align*}
    \overline{\mu}_{M}=\limsup_{r\to\infty} \frac{1}{r}\log\mathrm{vol}(B_{r}(x)),
\end{align*}
for an arbitrary $x\in M$.  Brooks showed in 1981, \cite{Br},
\begin{align*}
    \lm_{0}^{\mathrm{ess}}(\Delta_{M})\leq \frac{\overline{\mu}_{M}^{2}}{4}.
\end{align*}
Later in 1996 Sturm, \cite{Stu}, showed using the lower exponential growth rate of the distance balls with variable center
\begin{align*}
     \underline{\mu}_{M} =\liminf_{r\to\infty}\inf_{x\in M} \frac{1}{r}\log\mathrm{vol}(B_{r}(x))
\end{align*}
the following bound
\begin{align*}
    \lm_{0}(\Delta_{M})\leq \frac{\underline{\mu}_{M}^{2}}{4}.
\end{align*}
Indeed, this result in  \cite{Stu} is  proven for strongly local regular Dirichlet forms.

An immediate corollary is that $0$ is in the spectrum of the Laplace Beltrami operator for $M$ with subexponential growth, i.e., $\underline{\mu}_{M}=0$.

%%% SUBSECTION %%%%%%%%%%%%%%%%%%%%%%%%%%%%%%%%%%%%%%%%%%%
\subsection{Brooks' theorem for graphs with standard weights}

For graphs with standard weights and the normalizing measure Dodziuk/ Karp \cite{DKa} proved in 1987 the first analogue of Brooks' theorem for graphs. This result was later improved by Ohno/Urakawa \cite{OU} and Fujiwara \cite{Fuj1} resulting in the estimate
\begin{align*}
        \lm_{0}^{\mathrm{ess}}(\Delta_{n}) \leq 1-\frac{2e^{\mu_{n}/2}}{e^{\mu_{n}}+1}
\end{align*}
with
\begin{align*}
    \mu_{n}=\limsup_{r\to\infty}\frac{1}{r}\log n(B_{r}(x)),
\end{align*}
for arbitrary $x\in X$ and $n=\deg$. It can be seen that the bound above is smaller than $\mu_{n}^{2}/8$.

Next, we discuss how for graphs with standard weights and the counting measure such a bound fails when volume growth is determined by the combinatorial graph distance.  Again, we consider anti-trees which were introduced in Section~\ref{s:SC_graphs}. In \cite{KLW} it was shown that \begin{align*}
    a=\Big(\sum_{r=0}^{\infty}\frac{v_{r}}{s_{r}s_{r+1}}\Big)^{-1}
\end{align*}
is a lower bound on the spectrum of $\Delta$ on the anti-tree with
$s_{r}$ vertices in the $r$-th sphere and $v_r=s_{0}+\ldots +s_{r}$,
$r\ge0$, (where $a=0$ if the sum diverges). Moreover, in the case
where the sum converges the spectrum of $\Delta$ is purely discrete,
i.e., there is no essential spectrum. In particular, this implies
that anti-trees with $$s_{r}\sim r^{2+\eps},\qquad\eps>0,$$ have
positive bottom of the spectrum and no essential spectrum. However,
for $s_{r}\sim r^{2+\eps}$ we have $v_{r}\sim r^{3+\eps}$. That is
these are graphs of little more than cubic growth but these graphs
have positive bottom of the spectrum and no essential spectrum.
Hence, there is no analogue to Brooks' or Sturm's theorem for
$\Delta$ with respect to the combinatorial graph distance.

\subsection{Brooks' theorem involving intrinsic metrics}
Let $b$ be a graph over $(X,m)$ and $\rho$ be an intrinsic metric. Let
$B_{r}(x)$ be the distance $r$ ball about a vertex $x$ with respect to
the metric $\rho$. We define
\begin{align*}
    \mu=\liminf_{r\to\infty}\frac{1}{r}\log m(B_{r}(x)),
\end{align*}
for fixed $x\in X$ and
\begin{align*}
    \underline{\mu}=\liminf_{r\to\infty}\inf_{x\in X}\frac{1}{r}\log
    m(B_{r}(x)).
\end{align*}

In \cite{HKW}  analogues of Brooks' and Sturm's theorem for regular Dirichlet forms are proven. As a special case the following theorem is obtained. Under somewhat stronger assumptions the estimate for the essential spectrum was also obtained independently by Folz \cite{Fol2}.

\begin{thm}[Corollary 4.2 in \cite{HKW}] Let $b$ be a connected graph over $(X,m)$ and
$\rho$ be an intrinsic metric such that the balls are finite (B).
Then,
\begin{align*}
    \lm_{0}(L)\le \frac{\underline{\mu}^{2}}{8}.
\end{align*}
If furthermore $m(X)=\infty$, then
\begin{align*}
    \lm_{0}^{\mathrm{ess}}(L)\le \frac{\mu^{2}}{8}.
\end{align*}
\end{thm}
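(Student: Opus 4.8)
The plan is to run the standard variational upper bound for the bottom of the spectrum, but to squeeze out the factor $8$ (rather than the naive $4$) from a single sharp elementary inequality about the decay of $e^{-\al\rho}$. Write $\rho_{o}=\rho(o,\cdot)$ for a base point $o$. Since $L$ is the positive self-adjoint operator associated with $Q$ and $C_{c}(X)$ is a form core, I would use the Rayleigh quotient description
\begin{align*}
    \lm_{0}(L)=\inf_{0\neq\ph\in C_{c}(X)}\frac{Q(\ph)}{\|\ph\|^{2}},
\end{align*}
and, by Persson's theorem, the fact that $\lm_{0}^{\mathrm{ess}}(L)$ is the limit as $n\to\infty$ of the same infimum taken only over $\ph\in C_{c}(X)$ with $\supp\ph\cap B_{n}(o)=\emptyset$. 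So for the first bound it suffices to produce, for each $\eps>0$, test functions of Rayleigh quotient at most $(\underline\mu+\eps)^{2}/8$, and for the second bound such test functions with supports escaping to infinity.

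The heart of the matter is the estimate that for $\al>0$ and $u,v\ge 0$
\begin{align*}
    (e^{-\al u}-e^{-\al v})^{2}\le \frac{\al^{2}}{2}(u-v)^{2}(e^{-2\al u}+e^{-2\al v}),
\end{align*}
which, after dividing by $e^{-2\al\min(u,v)}$ and setting $h=\al|u-v|$, reduces to $(1-e^{-h})^{2}\le\tfrac12 h^{2}(1+e^{-2h})$ (asymptotically sharp as $h\to0$). Crucially this holds for all $h\ge0$, so no bound on the jump size is needed — this is exactly why only (B) is assumed. Applying it with $u=\rho_{o}(x)$, $v=\rho_{o}(y)$ to $\ph=e^{-\al\rho_{o}}$, using that $\rho_{o}$ is $1$-Lipschitz so $(u-v)^{2}\le\rho^{2}(x,y)$, and then the defining inequality $\sum_{y}b(x,y)\rho^{2}(x,y)\le m(x)$ of an intrinsic metric, I would obtain after symmetrizing the double sum
\begin{align*}
    Q(e^{-\al\rho_{o}})\le\frac{\al^{2}}{2}\sum_{x\in X}e^{-2\al\rho_{o}(x)}\sum_{y\in X}b(x,y)\rho^{2}(x,y)\le\frac{\al^{2}}{2}\|e^{-\al\rho_{o}}\|^{2}.
\end{align*}
The factor $\tfrac12$ supplied by the elementary inequality survives the symmetrization and is precisely what upgrades the naive bound $\al^{2}$ to $\al^{2}/2$; optimizing $\al\downarrow\mu/2$ then yields the exponent $8$.

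It remains to turn $e^{-\al\rho_{o}}$ into admissible compactly supported functions and to match the growth exponents against a $\liminf$. For $\lm_{0}(L)$, fix $\eps>0$, put $\al=(\underline\mu+2\eps)/2$, and choose by definition of $\underline\mu$ centers $x_{k}$ and radii $r_{k}\to\infty$ with $m(B_{r_{k}}(x_{k}))\le e^{(\underline\mu+\eps)r_{k}}$. I would test with $\ph_{k}=e^{-\al\rho(x_{k},\cdot)}1_{B_{r_{k}}(x_{k})}$, which lies in $C_{c}(X)$ by finiteness of balls (B). Running the estimate on $\ph_{k}$ gives $\tfrac{\al^{2}}{2}\|\ph_{k}\|^{2}$ plus truncation terms on edges crossing $\dd B_{r_{k}}(x_{k})$; these are of order $e^{-2\al r_{k}}m(B_{r_{k}}(x_{k}))\le e^{-\eps r_{k}}\to0$ since $2\al=\underline\mu+2\eps>\underline\mu+\eps$, while $\|\ph_{k}\|^{2}$ stays bounded below because $\ph_{k}(x_{k})\to1$. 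Thus $\limsup_{k}Q(\ph_{k})/\|\ph_{k}\|^{2}\le(\underline\mu+2\eps)^{2}/8$, and letting $\eps\to0$ gives $\lm_{0}(L)\le\underline\mu^{2}/8$. For $\lm_{0}^{\mathrm{ess}}(L)$ I would repeat the construction with the fixed center $o$ and the fixed-center growth $\mu$, but on annuli $B_{R_{k}}(o)\setminus B_{r_{k}}(o)$ with $r_{k}\to\infty$, so that supports escape to infinity; the hypothesis $m(X)=\infty$ guarantees these far annuli carry enough mass to keep the functions normalizable, and Persson's theorem delivers $\lm_{0}^{\mathrm{ess}}(L)\le\mu^{2}/8$.

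The hard part will not be the functional-analytic scaffolding (min–max and Persson), which is routine, but rather two things: verifying that the factor $\tfrac12$ in the elementary inequality is genuine and holds for all $h\ge0$ (this is what both kills the jump-size hypothesis and produces the $8$), and the bookkeeping forced by the $\liminf$ — truncating to the good radii, controlling the crossing-edge boundary terms by the exponential smallness at $r_{k}$, and, for the essential spectrum, assembling a genuine Weyl sequence escaping to infinity using $m(X)=\infty$.
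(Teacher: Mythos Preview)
Your core estimate is correct and matches the paper: the inequality $(e^{-\al u}-e^{-\al v})^{2}\le \tfrac{\al^{2}}{2}(u-v)^{2}(e^{-2\al u}+e^{-2\al v})$ holds for all $u,v\ge0$, and combined with the intrinsic-metric condition it yields $\QQ(e^{-\al\rho_o})\le\tfrac{\al^2}{2}\|e^{-\al\rho_o}\|^2$. This is exactly what the paper obtains (it cites the mean value theorem, but the sharp factor $\tfrac12$ really comes from your inequality).

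The genuine gap is in your truncation step. Multiplying by the indicator $1_{B_{r_k}}$ produces, on each edge $(x,y)$ with $x\in B_{r_k}(x_k)$ and $y\notin B_{r_k}(x_k)$, a contribution $b(x,y)\,e^{-2\al\rho(x_k,x)}$ to $Q(\ph_k)$. Since only (B) is assumed and \emph{not} finite jump size (J), such an edge can emanate from a vertex $x$ arbitrarily close to the center $x_k$, where $e^{-2\al\rho(x_k,x)}$ is of order $1$, not $e^{-2\al r_k}$; summing then gives $\sum_{x\in B_{r_k}}e^{-2\al\rho(x_k,x)}\sum_{y\notin B_{r_k}}b(x,y)$, and the inner sum is not controlled by $m(x)$ (only $\sum_y b(x,y)\rho(x,y)^2\le m(x)$ is). So your claim that the boundary terms are of order $e^{-2\al r_k}m(B_{r_k}(x_k))$ is unjustified under (B) alone, and the argument breaks precisely at the point where you wanted (J) to be dispensable. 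The same problem recurs on the inner and outer boundaries of your annuli for the essential-spectrum part.

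The paper avoids this by a different truncation: instead of a hard cutoff it sets $g_{a,r}=(e^{2ar}f_a-1)\vee 0$, which is a $1$-Lipschitz (normal) contraction applied to $e^{2ar}f_a$. Then $|g_{a,r}(x)-g_{a,r}(y)|\le e^{2ar}|f_a(x)-f_a(y)|$ on \emph{every} edge, so no separate boundary term appears and your pointwise inequality can be used globally; moreover $\supp g_{a,r}\subseteq B_{2r}$, so (B) makes $g_{a,r}\in C_c(X)$. For the essential spectrum the paper takes the further contraction $h_{a,r}=1\wedge g_{a,r}$, which again avoids boundary terms and, using $m(X)=\infty$, converges weakly to $0$ so that a Persson-type argument applies. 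Replacing your hard cutoffs by these soft ones fixes the proof; everything else in your outline is sound.
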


We indicate the idea of the proof.
\begin{proof}[Idea of the proof]
Let $\ov
\mu=\limsup_{r\to\infty}\frac{1}{r}\log m(B_{r}(x))$. Then, the
functions $f_{a}=e^{-a\rho(o,\cdot)}$ for $a>\ov{\mu}/2$ and
fixed $o$ are in $\ell^{2}(X,m)$. Moreover, by the mean value
theorem and an estimate as in Section~\ref{s:Metric:estimate}
we find that
\begin{align*}
    \QQ(f_{a})\leq \frac{a^{2}}{2}\sum_{x\in X}|f_a(x)|^{2}\sum_{y\in
    X} b(x,y)\rho(x,y)^{2}\leq\frac{a^{2}}{2}\|f_{a}\|^{2}
\end{align*}
To pass from $\ov{\mu}$ to $\mu$ or $\underline{\mu}$ we
consider
$$g_{a,r}=(e^{2a r}f_{a}-1)\vee 0.$$
Note that $g_{a,r}$ is supported on $B_{2r}$ and, therefore, $g_{a,r}$ is in
$C_{c}(X)$ whenever (B) applies.  Finally, to see the statement for the essential spectrum we
need to modify $g_{a,r}$ such that we obtain a sequence of
functions that converge weakly to zero. We achieve this by cutting
off $g_{a,r}$ at $1$ on $B_{r}$, i.e.,
\begin{align*}
    h_{a,r}=1\wedge g_{a, r}.
\end{align*}
The weak convergence of $h_{a,r}$ to zero is ensured by the
assumption $m(X)=\infty$. Now, the statement follows by a
Persson-type theorem, \cite[Proposition~2.1]{HKW}.
\end{proof}

Let us end this section with a few remarks.

In \cite{HKW} it is also shown that the assumption (B) can be replaced by (A).

As a corollary we get under the assumption of the theorem $2\al\leq
\mu$ for the Cheeger constant $\al$ of
Section~\ref{s:Cheeger:intrinsic}.

By comparing the degree path metric $\rho_{0}$ with the
combinatorial graph distance $d$ on anti-trees one finds that for
$s_{r}\sim r^{2-\eps}$ the balls with respect to $\rho_{0}$ grow
polynomially, for $s_{r}\sim r^{2}$ they grow exponentially and for
$s_{r}\sim r^{2+\eps}$ the graph has finite diameter with respect to
$\rho_{0}$. This shows that the examples in the section above are
indeed sharp.

%%%%%%%%%%%%%%%%%%%%%%%%%%%%%%%%%%%%%%%%%%%%%%%%%%%%%%%%%%
%% SECTION %%%%%%%%%%%%%%%%%%%%%%%%%%%%%%%%%%%%%%%%%%%%%%%
%%%%%%%%%%%%%%%%%%%%%%%%%%%%%%%%%%%%%%%%%%%%%%%%%%%%%%%%%%
\section{Uniform subexponential growth and $p$-independence of the
spectrum} %This section is dedicated to study the spectra of the generators $L_{p}$ of the semigroup $e^{-tL}$ on $\ell^{p}(X,m)$.

In the beginning of the 80's Simon \cite{Si2} asked a famous question whether the spectra of
certain Schr\"odinger operators on $\R^{d}$ are independent on which
$L^{p}$ space they are considered. Hempel/Voigt \cite{HV} gave an affirmative answer in 1986. Here, we consider a geometric analogue of this question. First, we discuss a theorem by Sturm on Riemannian manifolds, \cite{Stu2}, and secondly, we present a result for weighted graphs involving intrinsic metrics.

\subsection{$p$-independence for  manifolds}\label{s:p_ind}
In 1993 Sturm \cite{Stu2} proved a theorem for uniformly elliptic operators on a
complete Riemannian manifold $M$ whose Ricci curvature is bounded
below. We assume that $M$ grows uniformly subexponentially if for
any $\eps>0$ there is $C>0$ such that for all $r>0$ and all $x\in M$
\begin{align*}
\mathrm{vol}(B_{r}(x))\leq C e^{\eps r}\mathrm{vol}(B_{1}(x)).
\end{align*}
Then the spectrum of a uniformly elliptic operator on this manifold is independent of the  space $L^{p}(M)$, $p\in[1,\infty]$ on which it is considered.

\subsection{$p$-independence for weighted graphs with intrinsic metrics}

A graph $(b,c)$ over $(X,m)$ with an intrinsic metric $\rho$ is said
to have \emph{uniform subexponential growth} if  for any $\eps>0$
there is $C>0$ such that for all $r>0$ and all $x\in M$
\begin{align*}
m(B_{r}(x))\leq C e^{\eps r}m(x).
\end{align*}
The proof of the following theorem follows closely the strategy of Sturm in \cite{Stu2} which makes it necessary to consider a complexification of the function spaces.

\begin{thm}[Theorem 1 in \cite{BHK}]
Let $b$ be a connected graph over $(X,m)$ and $\rho$ be an intrinsic
metric such that the balls are finite (B), which has finite jump
size (J) and the graph grows uniformly subexponentially. Then,
$$\si(L_{p})=\si(L_{2}),\qquad p\in[1,\infty].$$
\end{thm}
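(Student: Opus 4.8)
The plan is to follow Sturm's manifold argument \cite{Stu2} transplanted to the discrete setting, and to reduce the whole statement to a single resolvent-extension assertion, working throughout over the complexified spaces (which is why complexification is unavoidable). First I would dispose of the inclusion $\si(L_2)\subseteq\si(L_p)$ by a soft duality-and-interpolation argument: since $L_{p'}=(L_p)^{*}$ for $1/p+1/p'=1$, one has $\si(L_{p'})=\ov{\si(L_p)}$, so for a \emph{real} $\lm\in\si(L_2)$, if $\lm$ lay in the resolvent set of $L_p$ it would also lie in that of $L_{p'}$; as $p\le 2\le p'$, Riesz--Thorin interpolation of the (consistent) resolvents across $p,2,p'$ would then force $\lm$ into the resolvent set of $L_2$, a contradiction. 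Hence the entire content of the theorem is the reverse inclusion $\si(L_p)\subseteq\si(L_2)$: for each $\lm\notin\si(L_2)$ the $\ell^2$-resolvent $(L-\lm)^{-1}$ must extend to a bounded operator on $\ell^p$, which by consistency and because every $L_p$ is a restriction of $\LL$ (Theorem~5 in \cite{KL1}) then necessarily equals $(L_p-\lm)^{-1}$.

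The analytic heart is a twisted (weighted) $\ell^2$ estimate built from the intrinsic metric. For a function $\psi$ that is Lipschitz with respect to $\rho$ with Lipschitz constant $\al$, I would compare the twisted form $\QQ(e^{-\psi}f,e^{\psi}\ov f)$ with $\QQ(f)$: expanding the differences $e^{\pm\psi(x)}-e^{\pm\psi(y)}$ produces factors $e^{|\psi(x)-\psi(y)|}\le e^{\al s}$, where finiteness of the jump size (J) is exactly what keeps this non-local contribution from exploding, while the intrinsic inequality $\sum_{y}b(x,y)\rho^{2}(x,y)\le m(x)$ closes the computation and yields a bound with constant $\gm(\al)\to 0$ as $\al\to 0$. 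This gives $\|e^{\psi}e^{-tL}e^{-\psi}\|_{2\to2}\le e^{\gm(\al)t}$, and by analyticity of the self-adjoint semigroup the same estimate persists for complex times in a sector. Consequently the twisted generator $e^{\psi}Le^{-\psi}$ has spectrum within distance $o(1)$ (as $\al\to0$) of $\si(L_2)$, so for fixed $\lm\notin\si(L_2)$ and $\al$ small the weighted resolvent $e^{\psi}(L-\lm)^{-1}e^{-\psi}$ is bounded on $\ell^2$, uniformly in the base point when $\psi=\al\rho(x_0,\cdot)$.

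The second step is to upgrade this family of weighted $\ell^2$ bounds into genuine $\ell^p$ bounds on $(L-\lm)^{-1}$, and this is where \emph{uniform} (rather than merely pointwise) subexponential growth is decisive. Following Sturm, I would decompose the space into metric annuli $A_k=\{x:\rho(x_0,x)\in[k,k+1)\}$ about a base point, extract from the weighted estimate the off-diagonal operator bounds $\|\mathbf 1_{A_j}(L-\lm)^{-1}\mathbf 1_{A_k}\|_{2\to2}\lesssim e^{-\al|j-k|}$, and interpolate these against the trivial $\ell^1$- and $\ell^\infty$-contractivity of the resolvent inherited from the Markovian semigroup (for which $\|e^{-tL}\|_{p\to p}\le 1$). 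Summing the resulting geometric series, which converges once $\eps<\al$ precisely because $m(B_{r}(x_0))\le C_{\eps}e^{\eps r}m(x_0)$, produces boundedness of $(L-\lm)^{-1}$ on every $\ell^p$; finiteness of balls (B) guarantees that all blocks are finite and that the abstract resolvent genuinely maps into $\ell^p$ and agrees with $(L_p-\lm)^{-1}$ on the common core $C_c(X)$.

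The step I expect to be the main obstacle is obtaining the resolvent bound \emph{uniformly up to the boundary} of $\si(L_2)$, i.e.\ for every $\lm\notin\si(L_2)$ and not only for $\lm$ far to the left of the spectrum, where the naive Laplace transform $\int_{0}^{\infty}e^{\lm t}e^{-tL}\,dt$ already converges on $\ell^p$ by contractivity. This forces the full sectorial/analytic version of the twisted estimate and careful bookkeeping showing that the constants degenerate only as $\al,\eps\to0$. It is exactly here that uniform subexponential growth cannot be replaced by a pointwise version: making the volume correction $\eps$ independent of the base point is what permits the passage from weighted $\ell^2$ bounds to honest $\ell^p$ operator bounds, and together with (J), which prevents the twist from inflating the non-local part, these are the two hypotheses the argument genuinely cannot do without.
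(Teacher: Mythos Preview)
Your outline follows exactly the route the paper indicates: Sturm's strategy from \cite{Stu2} adapted to graphs, carried out on the complexified spaces, with the intrinsic metric furnishing the twisted $\ell^2$ estimate (where (J) bounds the non-local correction $e^{\al s}$) and uniform subexponential growth closing the off-diagonal summation. The paper itself offers no proof beyond that one-line remark, so your sketch is in fact more detailed than the text you are meant to be compared against.

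One phrase deserves correction. You write that the off-diagonal $\ell^2$ blocks are to be ``interpolated against the trivial $\ell^1$- and $\ell^\infty$-contractivity of the resolvent inherited from the Markovian semigroup.'' For an arbitrary $\lm\notin\si(L_2)$ there is no such trivial $\ell^1$ or $\ell^\infty$ bound on $(L-\lm)^{-1}$---that is exactly the conclusion you are after---so there is nothing pre-existing to interpolate against. The actual mechanism (which the rest of your paragraph describes correctly) is to feed the off-diagonal $\ell^2\!\to\!\ell^2$ decay $e^{-\al|j-k|}$ \emph{directly} into a Schur/Holmgren test, using the uniform volume bound $m(B_r(x))\le C_\eps e^{\eps r}m(x)$ to make the double sum converge once $\eps<\al$; this produces the $\ell^1$ and $\ell^\infty$ bounds from scratch, and only then does Riesz--Thorin fill in intermediate $p$. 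You already identify this passage as the main obstacle and correctly pinpoint why uniformity in the base point cannot be relaxed, so this is a wording issue rather than a genuine gap.
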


The statement of the theorem is in general wrong if one drops the
growth assumption. In particular, if $\lm_{0}(L_{2})>0$, then the
graph grows exponentially, i.e., $\mu>0$ by the section above. On
the other hand, if the graph is stochastically complete, (i.e., $e^{-tL}1=1$, see Section~\ref{s:SC_graphs}), then $1$ is an
eigenfunction of $L_{\infty}$ to the eigenvalue $0$ and, therefore,
$\lm_{0}(L_{1})=0$ by duality. Hence,
$\lm_{0}(L_{1})<\lm_{0}(L_{2})$.

On the other hand, it is an open question what happens for graphs
that are subexponentially growing, i.e., $\mu=0$, but not uniformly
subexponentially growing.
\medskip

\textbf{Acknowledgement}. The author  expresses his gratitude to Daniel Lenz for introducing him to the guiding theme of this survey that intrinsic metrics are the answer to many questions. Furthermore, several inspiring discussions with Xueping Huang resulted in various insights that improved this article. Moreover, the author appreciates the support and the kind invitation to the conference on 'Mathematical Technology of Networks - QGraphs 2013' at the ZiF in Bielefeld by Delio Mugnolo. Finally, funding by  the DFG is acknowledged.

\scriptsize

\end{document}